\def\R{\mathbb{R}}
\def\N{\mathbb{N}}
 \numberwithin{equation}{section}
 \newtheorem{theorem}{Theorem}[section]
  \newtheorem{corollary}[theorem]{Corollary}
  \newtheorem{lemma}[theorem]{Lemma}
\newtheorem{remark}[theorem]{Remark}
\newcommand{\dx}{\, {\rm d} x}
\newcommand{\dtheta}{\, {\rm d} \theta}
\newcommand{\bTheta}{{\boldsymbol{\Theta}}}
\newcommand{\eps}{\varepsilon}
\newcommand{\PP}{\mathbb{P}}
\newcommand{\EE}{\mathbb{E}}
\newcommand{\VV}{\mathbb{V}}
\newcommand{\RR}{\mathbb{R}}
\newcommand{\fracs}[2]{{ \textstyle \frac{#1}{#2} }}
\definecolor{MyRed}{RGB}{204,0,0}
\definecolor{MyBlue}{RGB}{18,74,145}
\begin{document}

\title{A Hierarchical Multilevel Markov Chain Monte Carlo Algorithm with  Applications to Uncertainty Quantification in Subsurface Flow\footnote{Part of this work was performed under the auspices of the U.S. Department of Energy by Lawrence Livermore National Laboratory under Contract DE-AC52-07A27344. LLNL-JRNL-630212-DRAFT}}

\author{T.J. Dodwell$^1$, \ C. Ketelsen$^2$, \ R. Scheichl$^3$ \ and \ A.L. Teckentrup$^4$} 

\date{}
\maketitle

\begin{center}
\begin{footnotesize}

\vspace{-0.75cm}

\noindent
${}^1$ Dept of Mechanical Engineering, University of Bath, Bath BA2
7AY, UK 

\vspace{0.1cm}

\noindent
${}^2$ Dept of Applied Mathematics, 526 UCB, University of Colorado at
Boulder, CO 80309-0526, USA

\vspace{0.1cm}

\noindent
${}^3$ Dept of Mathematical Sciences, University of Bath, Bath BA2
7AY, UK. \ Email: 
{\tt R.Scheichl@bath.ac.uk}

\vspace{0.1cm}

\noindent
${}^4$ Mathematics Institute, Zeeman Building, University of Warwick,
Coventry CV4 7AL, UK 

$  $
\end{footnotesize}
\end{center}


\begin{abstract}
In this paper we address the problem of the prohibitively
large computational cost of existing Markov chain Monte 
Carlo methods for large--scale applications with high 
dimensional parameter spaces, e.g. in uncertainty quantification 
in porous media flow. We propose a new multilevel Metropolis-Hastings 
algorithm, and give an abstract, problem dependent theorem on 
the cost of the new multilevel estimator based on a set of simple, 
verifiable assumptions. For a typical model problem in subsurface 
flow, we then provide a detailed analysis of these assumptions 
and show significant gains over the standard Metropolis-Hastings 
estimator. Numerical experiments confirm the analysis and 
demonstrate the effectiveness of the method with consistent 
reductions of more than an order of magnitude
in the cost of the multilevel estimator over the standard 
Metropolis-Hastings algorithm for tolerances $\varepsilon < 10^{-2}$.
\end{abstract}

\noindent
{\bf Keywords.} Elliptic PDES with random coefficients, log-normal
coefficients, finite element analysis, Bayesian approach,
Metropolis-Hastings algorithm, multilevel Monte Carlo.\vspace{2ex}

\noindent
{\bf Mathematics Subject Classification (2000).} 35R60, 62F15, 62M05, 65C05, 65C40, 65N30

\section{Introduction}\label{section: introduction}

The parameters in mathematical models for many physical
processes are often impossible to determine fully or accurately, and
are hence subject to uncertainty. It is of great importance to
quantify the uncertainty in the model outputs based on the 
(uncertain) information that is available on the model inputs. 
A popular way to achieve this is stochastic modelling. Based on the
available information, a probability distribution (the {\it prior} in
the Bayesian framework) is assigned to the input parameters. If in
addition, some dynamic data (or {\em observations}) $F_\mathrm{obs}$
related to the model outputs are available, it is possible to 
reduce the overall uncertainty and to get a better representation 
of the model by conditioning the prior distribution on this data 
(leading to the {\it posterior}). 

In most situations, however, the posterior distribution is intractable
in the sense that exact sampling from it is impossible. One way to 
circumvent this problem, is to generate samples using a
Metropolis--Hastings--type Markov chain Monte 
Carlo (MCMC) approach \cite{hastings70,mrrtt53,robert_casella}, 
which consists of two main steps: (i) given the previous sample, 
a new sample is generated 
according to some proposal
distribution, such as a random walk; (ii) the likelihood of this
new sample (i.e. the model fit to $F_\mathrm{obs}$) is compared to the 
likelihood of the
previous sample. Based on this comparison, the proposed sample 
is either accepted and used for inference, or rejected and 
the previous sample is used again, leading
to a Markov chain.
A major problem with MCMC is the high cost of the likelihood calculation 
for large--scale applications, e.g. in subsurface flow where, for 
accuracy reasons, a partial differential equation (PDE) with highly 
varying coefficients needs to be solved numerically on a fine spatial grid.
Due to the
slow convergence of Monte Carlo averaging, the number of samples is
also large and moreover, the likelihood has to be calculated also for 
all the samples that are rejected in the end. Altogether, this often
leads to an
intractably high overall complexity, particularly in the
context of high-dimensional parameter spaces ({typical in 
subsurface flow}), where the acceptance 
rate of MCMC methods can be very low.

We show here how the computational cost of the standard
Metropolis-Hastings algorithm can be reduced significantly by using a
multilevel approach. This has already 
proved highly successful in the context 
of standard Monte Carlo estimators based on independent and identically 
distributed (i.i.d.) samples
\cite{cgst11,bsz11,gkss11,cst11,tsgu13} for subsurface flow problems. 
The multilevel Monte Carlo 
(MLMC) method was first introduced by Heinrich for the computation 
of high-dimensional, parameter-dependent integrals \cite{heinrich01}, 
and then rediscovered by Giles \cite{giles08} in the context of 
stochastic differential equations in finance. Similar ideas were also 
used in \cite{bgr94,bi03}
to accelerate statistical mechanics calculations. 
The basic ideas are to (i) exploit the
linearity of expectation, (ii) introduce
a hierarchy of computational models that converge  
(with increasing model resolution)
to some limit model (e.g. the original PDE), and (iii) build 
estimators for {the differences of output quantities instead of 
the quantities themselves.} In the context of 
PDEs with random coefficients, the multilevel estimators use 
a hierarchy of spatial grids and exploit that the 
numerical solution of a PDE, and thus 
the evaluation of the likelihood, is computationally much 
cheaper on coarser spatial grids.
In that way, the individual estimators will either have small
variance, since differences of output quantities from
consecutive models go to zero with increased model
resolution, or they will require significantly less computational work per
sample for low model resolutions. Either way the cost of all the
individual estimators is significantly reduced, easily compensating for
the cost of having to compute $L+1$ estimators instead of one,
where $L$ is the number of levels.

However, the application of the multilevel approach in the context of 
MCMC is not straightforward. The posterior distribution, which
depends on the likelihood, has to be level-dependent, since otherwise
the cost on all levels would be dominated by the evaluation of the
likelihood on the finest level, leading to no real cost reduction. 
In order to avoid introducing extra 
bias in the estimator, we construct instead two parallel Markov chains
$\{\theta_\ell^n\}_{n\ge 0}$ and $\{\Theta_{\ell-1}^n\}_{n\ge 0}$ on
levels $\ell$ and $\ell-1$ each from the correct posterior
distribution on the respective level. The coarser of the two chains is
constructed using the standard Metropolis--Hastings algorithm, for
example using a (preconditioned) random walk. The main innovation is 
a new proposal distribution for the finer
of the two chains $\{\theta_\ell^n\}_{n\ge 0}$. Although similar 
two-level sampling strategies have been investigated in other
applications \cite{cf05,ehl06,fbwlh03}, the computationally cheaper 
coarse models were only used to accelerate the MCMC sampling and 
not as a variance reduction technique in the estimator. Some 
ideas on how to obtain a multilevel version of the MCMC estimator 
can also be found in the recent work \cite{hss12} on sparse 
MCMC finite element methods.

The central result of the paper is a complexity theorem (cf. Theorem 
\ref{thm:main}) that quantifies, for an 
abstract large--scale inference problem, the gains in the $\varepsilon$-cost 
of the multilevel Metropolis--Hastings algorithm over the standard 
version, i.e. the cost to achieve a root mean square error less than $\varepsilon$, in terms of powers of the tolerance $\varepsilon$. For a particular application 
in stationary, single phase subsurface flow with log-normal 
permeability prior and exponential covariance, we then verify 
the assumptions of Theorem \ref{thm:main}. We show that the 
$\varepsilon$-cost of our new multilevel version is indeed one order 
of $\varepsilon$ lower than its single-level counterpart (cf. 
Theorem \ref{thm:rates}), i.e. $\mathcal{O}(\varepsilon^{-(d+1)-\delta})$ 
instead of $\mathcal{O}(\varepsilon^{-(d+2)-\delta})$, for any $\delta > 0$,
where $d$ is the spatial dimension of the problem. 
The numerical experiments for $d=2$ 
in Section \ref{sec:num} confirm the theoretical results. 
In fact, in practice the cost for the multilevel estimator grows 
only like $\mathcal{O}(\varepsilon^{-d})$, but this seems to be a 
pre--asymptotic effect. The absolute cost 
is about $\mathcal{O}(\text{10--50})$ times lower than for the standard 
estimator for values of $\varepsilon$ around $10^{-3}$, 
which is a vast improvement
and brings the cost of the multilevel MCMC estimator down to a similar
order of the cost of standard multilevel MC estimators based on
i.i.d. samples. 
This provides real hope for practical applications of MCMC
analyses in subsurface flow and other large scale PDE applications. 

The outline of the rest of the paper is as follows. 
In Section \ref{sec:mcmc}, we recall, in a very general context, 
the Metropolis Hastings algorithm, together with results on its 
convergence. In Section \ref{sec:mlmcmc}, we then present a new 
multilevel version and give a general convergence analysis under 
a set of problem-dependent, but verifiable assumptions. A 
typical model problem arising in subsurface flow modelling is 
then presented in Section 4. We briefly describe the application 
of the new multilevel algorithm to this application, and give a 
rigorous convergence analysis and cost estimate of the new 
multilevel estimator {by verifying the abstract assumptions from 
Section \ref{sec:mlmcmc}}. Finally, in Section \ref{sec:num}, 
we present some numerical results for the model problem 
discussed in Section \ref{sec:mod}.


\section{Standard Markov chain Monte Carlo}
\label{sec:mcmc}
We will start with a review of the standard Metropolis Hastings
  algorithm, described in a general context. For ease of presentation,
  we leave a precise mathematical description of our model problem until Section \ref{sec:mod}. We denote by $\theta := 
(\theta_i)_{i=1}^R$ the $\RR^R$--valued random input vector to the
model, and denote by $X := (X_j)_{j=1}^M = X(\theta)$ the
$\RR^M$--valued random output. Let further $Q_{M,R} = \mathcal G(X)$
be some linear or non--linear functional of $X$. In the context of groundwater flow modelling, this could for example be the value of the pressure or the Darcy flux at or around a given point in the computational domain, or the outflow over parts of the boundary. In practice, both
$\theta$ and $X$ are often finite dimensional approximations of
infinite dimensional objects, and an underlying "true" model is
recovered as $M, R \rightarrow \infty$. We shall therefore refer to
$M$ as the {\em discretisation level} of the model. For more details see Section \ref{sec:mod}.

We consider the setting where we have some real-world
data (or {\em observations}) $F_\mathrm{obs}$ available, and want to incorporate this information into our simulation in order to reduce the overall uncertainty. The data $F_\mathrm{obs}$ is assumed to be finite dimensional, with $F_\mathrm{obs} \in \R^m$ for some $m \in \mathbb N$, and usually corresponds to another linear or non-linear functional $\mathcal F(X)$ of the model output. 

Let us denote the density of the conditional distribution of $\theta$ given $F_\mathrm{obs}$ by $\mathcal P(\theta\, | \,F_\mathrm{obs})$. Using Bayes' Theorem, we have
\begin{equation*}
\mathcal P(\theta\, | \,F_\mathrm{obs}) = \frac{\mathcal L(F_\mathrm{obs}\, |\, \theta) \, \pi_0^R(\theta)}{\mathcal P( F_\mathrm{obs})} \eqsim \mathcal L(F_\mathrm{obs}\, |\, \theta) \, \pi_0^R(\theta).
\end{equation*}
In the Bayesian framework, one usually refers to the conditional distribution $\mathcal P(\theta\, | \,F_\mathrm{obs})$ as the {\em posterior distribution}, to $\mathcal L(F_\mathrm{obs}\, |\, \theta)$ as the {\em likelihood} and to $ \pi_0^R(\theta)$ as the {\em prior distribution}. Since the normalising constant {$\mathcal P(F_\mathrm{obs})$} is not known in general, the conditional distribution $\mathcal P(\theta\, | \,F_\mathrm{obs})$ is generally intractable and exact sampling not available.

The likelihood $\mathcal L(F_\mathrm{obs}\, |\, \theta)$  gives the probability of observing the data $F_\mathrm{obs}$ given a particular value of $\theta$. In practice, this usually involves computing {the {\em model response}} $F_{M,R} := \mathcal F \left(X(\theta)\right)$ and comparing this to the observed data $F_\mathrm{obs}$. Note that since the model response depends on the discretisation parameter $M$, in practice we compute an approximation $\mathcal L_M(F_\mathrm{obs}\, |\, \theta)$ of the true likelihood $\mathcal L(F_\mathrm{obs}\, |\, \theta)$. We will denote the corresponding density of the approximate posterior distribution by
\[
\pi^{M,R}(\theta) \eqsim \mathcal L_M(F_\mathrm{obs}\, |\, \theta) \, \pi_0^R(\theta).
\]

Let now $\nu^{M,R}(\theta) := \pi^{M,R}(\theta) \dtheta$ denote the probability measure corresponding to the density $\pi^{M,R}$. We assume that as $M,R \rightarrow \infty$, we have $\EE_{\nu^{M,R}}\left[ Q_{M,R}\right] \rightarrow \EE_{\rho}\left[ Q \right]$, for some (inaccessible) random variable $Q$ and measure $\rho$. 
The goal of the simulation is to estimate $\EE_{\nu^{M,R}}\left[
    Q_{M,R} \right]$, for $M$, $R$ sufficiently large. Hence, we
  compute approximations (or {\em estimators}) $\widehat Q_{M,R}$ of
  $\EE_{\nu^{M,R}}\left[ Q_{M,R}\right]$. To estimate this with a
Monte Carlo type estimator, or in other words by a finite sample
average, we need to generate samples from the conditional distribution
$\nu^{M,R}$, which is usually intractable, as already mentioned. 
Instead, we will use the Metropolis Hastings MCMC algorithm in Algorithm 1.

\begin{figure}[t]
\begin{framed}
\noindent
{\bf ALGORITHM 1. (Metropolis Hastings MCMC)} \vspace{1ex}\\ 
{Choose $\theta^0$. For $n \geq 0$:}
\begin{itemize}
\item Given $\theta^n$, generate a proposal $\theta'$ from a given proposal distribution $q(\theta' | \theta^n)$.
\item Accept $\theta'$ as a sample with probability
\begin{equation}\label{eq:alpha}
\alpha^{M,R} \left( \theta' | \theta^n \right)  = \min \left\{ 1, \frac{\pi^{M,R}(\theta') \, q(\theta^n | \theta')}{\pi^{M,R}(\theta^n)\,  \, q(\theta'  |  \theta^n)}\right\} 
\end{equation}
i.e. $\theta^{n+1}=\theta'$ with probability $\alpha^{M,R}$ and $\theta^{n+1}=\theta^n$ with probability $1 - \alpha^{M,R}$.
\end{itemize}
\end{framed}
\end{figure}

Algorithm 1 creates a Markov chain $\{\theta^n\}_{n \in \mathbb N}$,
and the states $\theta^n$ are used as samples for inference in a Monte
Carlo sampler in the usual way. The proposal distribution $q(\theta' |
\theta^n)$ is what defines the algorithm. A common choice is a simple
random walk. However, as outlined in \cite{hsv11}, the basic random
walk does not lead to a convergence that is independent of the
  input dimension $R$. A better choice is a preconditioned
Crank-Nicholson (pCN) algorithm \cite{cds12}, which is also a crucial
ingredient in the multilevel Metropolis-Hastings algorithm applied to
the subsurface flow model problem below.

Under reasonable 
assumptions, one can show that $\theta^n \sim \nu^{M,R}$, as $n \rightarrow \infty$, and that sample averages computed with these samples converge to expected values with respect to the desired target distribution $\nu^{M,R}$ (see Theorem \ref{thm:mcconv}). 
The first few samples of the chain, say $\theta^0, \dots,
\theta^{n_0}$, are not usually used for inference to allow the chain
to get close to the target distribution $\nu^{M,R}$. This is referred
to as the {\em burn--in} of the MCMC algorithm. Although the length of
the burn-in is crucial for practical purposes, and largely influences
the behaviour of the resulting MCMC estimator for finite sample sizes,
asymptotic statements about the estimator are usually independent of the burn-in. We will denote our MCMC estimator by
\begin{equation}\label{eq:mcmcest}
\widehat Q^\mathrm{MC}_N := \frac{1}{N} \sum_{n=n_0+1}^{N+n_0} Q_{M,R}^{n} = \frac{1}{N} \sum_{n=n_0+1}^{N+n_0} \mathcal G\left(X(\theta^{n})\right),
\end{equation}
for any $n_0 \geq 0$, and only explicitly state the dependence on $n_0$ where needed.

\subsection{Convergence analysis of standard Metropolis-Hastings MCMC}
\label{sec:mcmc_conv}

Let us give a brief overview of the convergence properties of
  Algorithm 1, which we will need below in the analysis of the multilevel variant. For more details we refer the reader, e.g., to \cite{robert_casella}. 
Let
\begin{equation*}
K(\theta' | \theta) := \alpha^{M,R}(\theta' | \theta) \, q(\theta' | \theta) + \left(1- \int_{\mathbb R^R} \alpha^{M,R}(\theta'' | \theta) \, q(\theta'' | \theta) \dtheta''\right) \delta(\theta - \theta')
\end{equation*}
denote the transition kernel of the Markov chain $\{\theta^n\}_{n \in \mathbb N}$, 
{with $\delta(\cdot)$ the Dirac delta function}, and 
\begin{align*}
\mathcal E &= \{ \theta : \pi^{M,R}(\theta) > 0\}, \nonumber \\
\mathcal D &= \{ \theta : q(\theta | \theta^*) > 0 \text{ for some } \theta^* \in \mathcal E\}.
\end{align*}

The set $\mathcal E$ contains all parameter vectors which have a positive posterior probability, and is the set that Algorithm 1 should sample from. The set $\mathcal D$, on the other hand, consists of all samples which can be generated by the proposal distribution $q$, and hence contains the set that Algorithm~1 will actually sample from. For the algorithm to fully explore the target distribution, we therefore crucially require $\mathcal E \subset \mathcal D$. The following results are classical, and can be found in \cite{robert_casella}.

\begin{lemma}\label{lem:stat} Provided $\mathcal E \subset \mathcal D$, $\nu^{M,R}$ is a stationary distribution of the chain $\{\theta^n\}_{n \in \mathbb N}$.
\end{lemma}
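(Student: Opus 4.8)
The plan is to establish the stronger property of \emph{detailed balance} (reversibility) of the chain with respect to $\pi^{M,R}$, namely that
\[
\pi^{M,R}(\theta')\, K(\theta\,|\,\theta') = \pi^{M,R}(\theta)\, K(\theta'\,|\,\theta) \qquad \text{for all } \theta, \theta',
\]
and then to deduce stationarity by integrating this identity. Once detailed balance is known, integrating over $\theta'$ and using that $K(\cdot\,|\,\theta)$ is a probability kernel (so that $\int_{\RR^R} K(\theta'\,|\,\theta)\,d\theta' = 1$) yields $\int_{\RR^R} K(\theta\,|\,\theta')\,\pi^{M,R}(\theta')\,d\theta' = \pi^{M,R}(\theta)$, which is precisely the assertion that $\pi^{M,R}$ is a stationary distribution.

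To verify detailed balance I would split $K$ into its continuous (``accepted move'') part and its atomic (``rejected move'') part. The atomic part is built around $\delta(\theta-\theta')$, which is supported on the diagonal $\theta=\theta'$; there both sides of the identity coincide trivially, so this contribution is symmetric by inspection. The substantive computation is for the continuous part $\alpha^{M,R}(\theta\,|\,\theta')\,q(\theta\,|\,\theta')$ with $\theta\neq\theta'$. Substituting the definition \eqref{eq:alpha} of the acceptance probability and using the elementary identity $c\,\min\{1,a/c\}=\min\{c,a\}$ for $c>0$, I would rewrite
\[
\pi^{M,R}(\theta')\,q(\theta\,|\,\theta')\,\alpha^{M,R}(\theta\,|\,\theta') = \min\bigl\{\,\pi^{M,R}(\theta')\,q(\theta\,|\,\theta'),\ \pi^{M,R}(\theta)\,q(\theta'\,|\,\theta)\,\bigr\}.
\]
The right-hand side is manifestly invariant under swapping $\theta$ and $\theta'$, which is exactly detailed balance for the continuous part. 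Combining the two parts yields the full identity.

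The hypothesis $\mathcal E\subset\mathcal D$ enters to guarantee that this manipulation is legitimate on the support that matters: it ensures the entire support of $\pi^{M,R}$ lies within the reachable set of the proposal, so that the denominator $q(\theta\,|\,\theta')$ in \eqref{eq:alpha} is positive wherever the ratio is actually evaluated (validating the use of $c>0$ above), and so that no positive posterior mass is stranded on states the proposal can never produce. I expect the only genuinely delicate point to be the bookkeeping of the Dirac term: since $\delta$ is a distribution rather than a function, the cleanest route is to phrase the whole argument in integrated form, testing the detailed balance identity against an arbitrary bounded measurable function and checking that the diagonal contributions on the two sides agree. Everything else reduces to the symmetry of the $\min$ expression above.
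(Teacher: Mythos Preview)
Your proposal is correct and matches the paper's approach: the paper does not give a self-contained proof but simply cites \cite{robert_casella} and remarks that the condition $\mathcal E \subset \mathcal D$ is sufficient for the transition kernel to satisfy detailed balance $K(\theta\,|\,\theta')\,\pi^{M,R}(\theta') = K(\theta'\,|\,\theta)\,\pi^{M,R}(\theta)$, which is exactly what you establish and from which stationarity follows by integration.
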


Note that the condition $\mathcal E \subset \mathcal D$ is also sufficient for the transition kernel $K(\cdot | \cdot)$ to satisfy the usual detailed balance condition $K(\theta |  \theta^*) \, \pi^{M,R}(\theta^*) = K(\theta^* | \theta) \, \pi^{M,R}(\theta)$.

\begin{theorem}\label{thm:mcconv}
Suppose that $\EE_{\nu^{M,R}}\left[ |Q_{M,R}| \right] < \infty$ and 
\begin{equation}\label{eq:irreducible}
q(\theta | \theta^*) > 0, \text{ for all } (\theta, \theta^*) \in \mathcal E \times \mathcal E.
\end{equation}
Then  
\begin{equation*}
\lim_{N \rightarrow \infty} \widehat Q^\mathrm{MC}_N = \EE_{\nu^{M,R}}\left[ Q_{M,R} \right], \qquad \text{for any } \theta^0 \in \mathcal E \text{ and } n_0 \geq 0.
\end{equation*} 
\end{theorem}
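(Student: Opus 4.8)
The plan is to recognise this as a strong law of large numbers for an ergodic Markov chain and to reduce the statement to the standard ergodic theorem for Harris-recurrent chains (as in \cite{robert_casella}). The ingredients such a theorem requires are: a stationary distribution, $\pi^{M,R}$-irreducibility, aperiodicity, and positive Harris recurrence, together with $\pi^{M,R}$-integrability of the observable. The last is exactly the hypothesis $\EE_{\pi^{M,R}}\left[|Q_{M,R}|\right] < \infty$, and stationarity is already supplied by Lemma \ref{lem:stat}: indeed, condition \eqref{eq:irreducible} forces $q(\theta\,|\,\theta) > 0$ for every $\theta \in \mathcal E$ (take $\theta' = \theta$), so each point of $\mathcal E$ lies in $\mathcal D$, giving $\mathcal E \subset \mathcal D$ and hence the stationarity of $\pi^{M,R}$.

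First I would establish irreducibility directly from \eqref{eq:irreducible}. For $\theta \neq \theta'$ in $\mathcal E$, the transition kernel dominates its absolutely continuous part, $K(\theta\,|\,\theta') \geq \alpha^{M,R}(\theta\,|\,\theta')\,q(\theta\,|\,\theta')$, and on $\mathcal E \times \mathcal E$ both $\pi^{M,R}(\theta), \pi^{M,R}(\theta') > 0$ and $q(\theta\,|\,\theta') > 0$, so the acceptance probability in \eqref{eq:alpha} is strictly positive. Thus the sub-stochastic density $\alpha^{M,R}(\theta\,|\,\theta')\,q(\theta\,|\,\theta')$ is strictly positive throughout $\mathcal E \times \mathcal E$, which shows that from any state in $\mathcal E$ the chain reaches every subset of $\mathcal E$ of positive $\pi^{M,R}$-measure in a single step. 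This gives $\pi^{M,R}$-irreducibility; the same one-step positive density makes $\mathcal E$ a small set and furnishes a minorisation condition, from which aperiodicity is immediate.

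With a stationary probability distribution in hand, $\pi^{M,R}$-irreducibility upgrades to positive recurrence, and the minorisation just obtained gives positive Harris recurrence (see \cite{robert_casella}); in particular $\pi^{M,R}$ is the unique invariant distribution. Applying the ergodic theorem for positive Harris chains to the $\pi^{M,R}$-integrable function $Q_{M,R} = \mathcal G\left(X(\theta)\right)$ then yields
\[
\frac{1}{N}\sum_{n=n_0}^{N+n_0} Q_{M,R}^{(n)} \;\longrightarrow\; \EE_{\pi^{M,R}}\left[ Q_{M,R} \right] \quad \text{as } N \to \infty,
\]
almost surely. Harris recurrence is precisely what makes this hold for every starting point $\theta^0 \in \mathcal E$ (rather than only for $\pi^{M,R}$-almost every $\theta^0$), and since discarding the finite burn-in $\theta^0, \dots, \theta^{n_0}$ and re-indexing alters only a bounded number of terms in the Cesàro average, the limit is independent of $n_0 \geq 0$.

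The step I expect to be the main obstacle is the passage from $\pi^{M,R}$-almost-everywhere convergence---which the basic ergodic theorem for stationary irreducible chains delivers for free---to convergence from \emph{every} $\theta^0 \in \mathcal E$. This is exactly where Harris recurrence is needed, and verifying it carefully (rather than merely ordinary recurrence) is the delicate part; fortunately the strong, uniform positivity assumed in \eqref{eq:irreducible} makes all of $\mathcal E$ a small set, so the required minorisation holds globally and Harris recurrence follows without further work.
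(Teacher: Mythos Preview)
The paper does not supply its own proof of this theorem; it is stated as a classical result and deferred to \cite{robert_casella}. Your outline—establish $\pi^{M,R}$-irreducibility, upgrade to positive Harris recurrence, then invoke the ergodic theorem for Harris chains—is exactly the standard route found there, and is correct in spirit.

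One step is not quite right as written. You assert that the positivity in \eqref{eq:irreducible} makes all of $\mathcal E$ a small set and hence yields a global minorisation. But \eqref{eq:irreducible} is only \emph{pointwise} positivity of $q$; it gives no uniform lower bound, and $\mathcal E$ here is typically all of $\RR^R$, which cannot be a small set. So neither the global minorisation nor the aperiodicity claim follows from this. Two remarks repair the argument. First, aperiodicity is not needed for the strong law of large numbers (it is required only for total-variation convergence of the marginals, as in the Central Limit Theorem that follows). Second, for Metropolis--Hastings chains there is a direct route to Harris recurrence that does not pass through a global small-set condition: once the chain is $\pi^{M,R}$-irreducible, the accept/reject mechanism guarantees Harris recurrence (see Lemma~7.3 and Corollary~7.5 in \cite{robert_casella}, or Tierney (1994)). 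With that substitution your proof goes through.
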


The condition \eqref{eq:irreducible} is sufficient for the chain $\{\theta^n\}_{n \in \mathbb N}$ to be {\em irreducible}, and it is satisfied for example for the random walk sampler or for the pCN algorithm (cf. \cite{hsv11}). 
Lemma \ref{lem:stat} and Theorem \ref{thm:mcconv} above ensure that
asymptotically, sample averages computed with samples generated by
Algorithm 1 converge to the desired expected value. In particular, we
note that stationarity of $\{\theta^n\}_{n \in \mathbb N}$ is not
required in Theorem \ref{thm:mcconv}, and the estimator converges
for any burn--in $n_0 \geq 0$ and for all initial values $\theta^0 \in \mathcal E$.

Now that we have established the (asymptotic) convergence of the MCMC
estimator \eqref{eq:mcmcest}, let us bound its cost. 
We will quantify the accuracy of our estimator via the mean square error (MSE)
\begin{equation}\label{eq:mse}
e(\widehat Q^\mathrm{MC}_N)^2 := \mathbb{E}_\bTheta\left[\big(\widehat Q^\mathrm{MC}_N - \mathbb{E}_{{\rho}}(Q)\big)^2\right],
\end{equation}
where $\EE_\bTheta$ denotes the expected value with respect to the
joint distribution of $\bTheta := \{\theta^{n}\}_{n\in \mathbb{N}}$
  as generated by Algorithm~1 (not with respect to the target measure $\nu^{M,R}$).
We denote by $\mathcal C_\eps(\widehat Q^\mathrm{MC}_N)$ the
computational $\eps$-cost of the estimator, i.e. the number of
floating point operations needed to achieve a MSE $e(\widehat Q^\mathrm{MC}_N)^2 < \eps^2$.

Classically, the MSE can be written as the sum of the variance of the estimator and its bias squared,
\[
e(\widehat Q^\mathrm{MC}_N)^2 = \VV_\bTheta\left[\widehat Q^\mathrm{MC}_N\right] + \left( \EE_\bTheta\left[\widehat Q^\mathrm{MC}_N\right] - \EE_{\rho}\left[Q\right]\right)^2.
\]
Here, $\VV_\bTheta$ is again the variance with respect to the approximating measure generated by Algorithm~1.
Using the triangle inequality and linearity of expectation, we can
further bound this by
\begin{align}\label{eq:mse2}
e(\widehat Q^\mathrm{MC}_N)^2 
\leq \VV_\bTheta\left[\widehat Q^\mathrm{MC}_N\right] + {2} \left( \EE_\bTheta\left[\widehat Q^\mathrm{MC}_N\right] - \EE_{\nu^{M,R}}\left[\widehat Q^\mathrm{MC}_N \right]\right)^2 + {2} \left( \EE_{\nu^{M,R}}\left[Q_{M,R}\right] - \EE_{\rho}\left[Q \right]\right)^2 
\end{align}
The three terms in \eqref{eq:mse2} correspond to the three sources of
error in the MCMC estimator. The third (and last) term in
  \eqref{eq:mse2} is the discretisation error due to approximating $Q$
  by $Q_{M,R}$ and $\rho$ by $\nu^{M,R}$. The other two terms are
  the errors introduced by using an MCMC estimator for the expected
  value; the first term is the error due to using a finite number of samples and the second term is due to the samples not all being perfect (i.i.d.) samples from the target distribution $\nu^{M,R}$. 

Let us first consider the two MCMC related error terms. Quantifying, or even bounding, the variance and bias of an MCMC estimator in terms of the number of samples $N$ is not an easy task, and is in fact still a very active area of research. The main issue with bounding the variance is that the samples used in the MCMC estimator are not independent, which means that knowledge of the covariance structure is required in order to bound the variance of the estimator. Asymptotically, the behaviour of the MCMC related errors (i.e. Terms 1 and 2 on the right hand side of \eqref{eq:mse2}) can be described using the following Central Limit Theorem, which can again be found in \cite{robert_casella}.

Let $\tilde{\theta}^0 \sim \nu^{M,R}$. Then the auxiliary chain $\widetilde{\bTheta} := \{\tilde{\theta}^n\}_{n \in \mathbb N}$ constructed by Algorithm 1 starting from~$\tilde{\theta}^0$ is stationary, i.e. $\tilde \theta^n \sim \nu^{M,R}$ for all $n \geq 0$. The covariance structure of $\widetilde{\bTheta}$ is still implicitly defined by Algorithm 1 as for $\bTheta$. However, now $\VV_{\widetilde{\bTheta}}[\tilde{Q}^n_{M,R}] = \VV_{\nu^{M,R}}[\tilde{Q}_{M,R}]$, $\EE_{\widetilde{\bTheta}}[\tilde{Q}^n_{M,R}] = \EE_{\nu^{M,R}}[\tilde{Q}_{M,R}]$ and 
\[
\mathrm{Cov}_{{\widetilde{\bTheta}}}\left[\tilde{Q}_{M,R}^0, \, \tilde{Q}_{M,R}^n\right] = \EE_{\widetilde{\bTheta}} \left[\left( \tilde{Q}_{M,R}^0 - \EE_{\nu^{M,R}}[Q_{M,R}]\right) \left(\tilde{Q}_{M,R}^n - \EE_{\nu^{M,R}}[Q_{M,R}]\right)\right],
\] 
for any $n \geq 0$, where $\tilde{Q}^n_{M,R} :=
\mathcal{G}(X(\tilde{\theta}^n))$. The so-called {\em asymptotic
  variance} of the MCMC estimator is now defined as
\begin{equation}\label{def:assym_var}
\sigma_Q^2 := \VV_{\nu^{M,R}}\left[\tilde Q_{M,R}\right] + 2 \sum_{n=1}^\infty  \mathrm{Cov}_{\widetilde{\bTheta}}\left[\tilde{Q}_{M,R}^0, \, \tilde{Q}_{M,R}^n\right]. 
\end{equation}
Note that stationarity of the chain is assumed only in the definition of $\sigma_Q^2$, i.e. for $\widetilde{\bTheta}$, and it is not necessary for the samples
$\bTheta$ actually used in the computation of $\widehat Q^\mathrm{MC}_N$.

\begin{theorem}[Central Limit Theorem]\label{thm:clt} Suppose
  \eqref{eq:irreducible} holds, \ $\sigma_Q^2 < \infty$, \ and 
\begin{equation}\label{eq:aperiodic}
\PP\left[\alpha^{M,R}=1\right] < 1.
\end{equation}
Then 
we have, for any $n_0 \geq 0$ and $\theta^0 \in \mathcal E$,
\begin{equation*}
\sqrt{N}\left( \widehat Q^\mathrm{MC}_N - \EE_{\nu^{M,R}}\left[ Q_{M,R} \right]\right) \stackrel{D}{\longrightarrow} \mathcal N(0,\sigma_Q^2),
\end{equation*}
where $\stackrel{D}{\longrightarrow}$ denotes convergence in distribution.
\end{theorem}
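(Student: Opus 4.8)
The plan is to establish this as an instance of the central limit theorem for reversible, ergodic Markov chains. I would exploit that, under the hypotheses, the Metropolis--Hastings chain is reversible with respect to $\pi^{M,R}$: condition \eqref{eq:irreducible} forces $\mathcal E \subset \mathcal D$, which (as recorded after Lemma~\ref{lem:stat}) yields detailed balance, so the transition operator $P$ defined by $(Pg)(\cdot) = \int_{\RR^R} g(\theta)\, K(\theta \mid \cdot)\, \dtheta$ is self-adjoint on $L^2(\pi^{M,R})$. Conditions \eqref{eq:irreducible} and \eqref{eq:aperiodic} make the chain irreducible and aperiodic, hence Harris ergodic with stationary law $\pi^{M,R}$; consequently the limiting distribution of $\widehat Q^{\mathrm{MC}}_N$ is independent of the initial state $\theta^0 \in \mathcal E$ and of the burn-in $n_0$. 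The first reduction is therefore to prove the claim for the stationary chain $\widetilde{\bTheta}$ and then transfer it to $\bTheta$ by a coupling argument, the initial transient being $o_{\PP}(\sqrt N)$ after normalisation. Writing $Y^n := \tilde Q^n_{M,R} - \EE_{\pi^{M,R}}[Q_{M,R}]$ and $S_N := \sum_n Y^n$, the target is $N^{-1/2} S_N \stackrel{D}{\longrightarrow} \mathcal N(0,\sigma_Q^2)$.

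For the stationary chain the engine is a martingale approximation via the Poisson equation $(\mathrm{Id} - P)\psi = Y$, with candidate solution $\psi = \sum_{k\ge 0} P^k Y$. Given $\psi \in L^2(\pi^{M,R})$, one writes the telescoping decomposition
\begin{equation*}
S_N = \sum_{n=1}^N D^n + P\psi(\tilde\theta^0) - P\psi(\tilde\theta^N), \qquad D^n := \psi(\tilde\theta^n) - P\psi(\tilde\theta^{n-1}),
\end{equation*}
where $\{D^n\}$ is a stationary, ergodic martingale-difference sequence for the natural filtration, since $\EE[\psi(\tilde\theta^n)\mid \tilde\theta^{n-1}] = (P\psi)(\tilde\theta^{n-1})$. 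The boundary terms are bounded in $L^2(\pi^{M,R})$, hence $o_{\PP}(\sqrt N)$, so by the central limit theorem for stationary ergodic martingale differences $N^{-1/2}\sum_{n=1}^N D^n$ is asymptotically normal with variance $\sigma_D^2 = \EE_{\pi^{M,R}}[(D^1)^2]$. A short computation with the Poisson equation identifies
\[
\sigma_D^2 = \VV_{\pi^{M,R}}[\tilde Q_{M,R}] + 2\sum_{k=1}^\infty \mathrm{Cov}_{\pi^{M,R},\pi^{M,R}}[\tilde Q^0_{M,R},\, \tilde Q^k_{M,R}] = \sigma_Q^2,
\]
and Slutsky's theorem then assembles the pieces to give $N^{-1/2} S_N \stackrel{D}{\longrightarrow} \mathcal N(0,\sigma_Q^2)$.

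The hard part is the delicate convergence of $\sum_{k\ge0} P^k Y$, which does not follow from $\sigma_Q^2 < \infty$ alone for a general chain: an exact $L^2$ Poisson solution need not exist. This is precisely where reversibility is decisive. Since $P$ is self-adjoint, I would pass to its spectral decomposition on the orthogonal complement of the constants and express everything through the spectral measure $E_Y$ of $Y$, under which $\sigma_Q^2 = \int_{[-1,1)} \frac{1+\lambda}{1-\lambda}\, \mathrm{d}E_Y(\lambda)$. Rather than solving the Poisson equation exactly, I would regularise it using the resolvent $\psi_\delta := \sum_{k\ge0}(1-\delta)^k P^k Y = (\mathrm{Id} - (1-\delta)P)^{-1} Y$ and verify that the associated martingale differences $D^n_\delta$ form an $L^2(\pi^{M,R})$-Cauchy family as $\delta \downarrow 0$; the assumed finiteness of $\sigma_Q^2$ is exactly the Kipnis--Varadhan condition guaranteeing this, together with the convergence of $\sigma_{D_\delta}^2$ to $\sigma_Q^2$. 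Aperiodicity from \eqref{eq:aperiodic} ensures $-1$ is not an eigenvalue, so the spectral integral is well behaved at the left endpoint, and the martingale-approximation argument above then goes through with $\psi_\delta$ in place of $\psi$, completing the proof.
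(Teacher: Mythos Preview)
The paper does not actually prove this theorem; it is stated as a classical result and attributed to \cite{robert_casella} without further argument. Your proposal therefore goes well beyond what the paper offers, supplying an essentially complete proof via the Kipnis--Varadhan martingale approximation for reversible chains. This is a correct and standard route: detailed balance makes the transition operator $P$ self-adjoint on $L^2(\pi^{M,R})$, the hypothesis $\sigma_Q^2 < \infty$ is precisely the Kipnis--Varadhan spectral condition $\int_{[-1,1]} \frac{1+\lambda}{1-\lambda}\,\mathrm{d}E_Y(\lambda) < \infty$, and the resolvent regularisation $\psi_\delta = (\mathrm{Id} - (1-\delta)P)^{-1}Y$ then yields an $L^2$-Cauchy family of martingale differences whose variance converges to $\sigma_Q^2$, after which the stationary ergodic martingale CLT and Slutsky's theorem finish the job. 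The transfer from the stationary chain to an arbitrary initial state via Harris ergodicity and coupling is likewise standard under \eqref{eq:irreducible} and \eqref{eq:aperiodic}.

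One minor remark: your assertion that aperiodicity from \eqref{eq:aperiodic} ensures $-1$ is not an eigenvalue is a little loose in the general-state-space setting; what actually controls the left endpoint of the spectral integral is the finiteness of $\sigma_Q^2$ itself, which you have already assumed. Aperiodicity is needed for the ergodic and coupling steps rather than for the spectral estimate. This does not affect the validity of your argument.
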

The condition \eqref{eq:aperiodic} is sufficient for the chain $\bTheta$
to be {\em aperiodic}. It is difficult to prove 
theoretically. In practice, however, this condition is always satisfied, since not all proposals in Algorithm~1 will agree with the observed data and thus be accepted.
Theorem \ref{thm:clt} shows that asymptotically, the sampling
  error of the MCMC estimator decays at {the same rate as} the sampling error of an estimator based on i.i.d. samples.
Note that this includes both sampling errors, and so the constant $\sigma_{Q}^2$ is in general larger than in the i.i.d. case where it is simply $\VV_{\nu^{M,R}}\left[Q_{M,R}\right]$.

Since we are interested in a bound on the MSE of our MCMC estimator for a fixed number of samples $N$, we make the following assumption:\vspace{1.5ex}

\noindent
{\bf A1.} For any $N \in \mathbb N$,
\begin{equation}
\label{eq:A1}
\VV_\bTheta\left[\widehat Q^\mathrm{MC}_N\right] + \left( \EE_\bTheta\left[\widehat Q^\mathrm{MC}_N\right] - \EE_{\nu^{M,R}}\left[\widehat Q^\mathrm{MC}_N \right]\right)^2 \lesssim \frac{\VV_{\nu^{M,R}}[Q_{M,R}]}{N},
\end{equation} 
with a constant that is independent of $M$, $N$ and $R$.\vspace{1.5ex}

Such non-asymptotic bounds on the sampling errors are difficult to
  obtain, but have recently been proved for certain
  Metropolis--Hastings algorithms, see
  e.g. \cite{hsv11,rudolf_thesis,hss12}, provided the chain is
  sufficiently burnt--in. The implied 
constant in Assumption A1 usually depends on quantities such as
the covariances appearing in the asymptotic variance
$\sigma_Q^2$ and will in general only be independent of the dimension $R$ for judiciously chosen
  proposal distributions such as the pCN algorithm. For the simple random walk, for example, the hidden constant grows linearly in $R$. It is possible to relax Assumption A1 and prove convergence for algorithms also in this case, but we choose not to do this for ease of presentation.

To complete the error analysis, let us now consider the last term in
the MSE \eqref{eq:mse2}, the discretisation bias. As before, we assume 
$\EE_{\nu^{M,R}}\left[ Q_{M,R}\right] \rightarrow
\EE_{\rho}\left[Q \right]$ for $M, R \rightarrow \infty$ with a certain order of convergence
\begin{equation}\label{eq:meanconv}
\left|\EE_{\nu^{M,R}}\left[ Q_{M,R}\right] - \EE_{\rho}\left[Q \right] \right| \lesssim M^{-\alpha} + R^{-\alpha'},
\end{equation}
for some $\alpha, \alpha' > 0$. The rates $\alpha$ and $\alpha'$ will
be problem dependent.
Let now $R=M^{\alpha/\alpha'}$, such that the two error contributions
in \eqref{eq:meanconv} are balanced. Then it follows from
\eqref{eq:mse2}, \eqref{eq:A1} and \eqref{eq:meanconv} that the MSE of the MCMC estimator can be bounded by 
\begin{equation}\label{eq:msebound}
e(\widehat Q^\mathrm{MC}_N)^2 \lesssim \frac{\VV_{\nu^{M,R}}[Q_{M,R}]}{N} + M^{-\alpha}.
\end{equation}
Under the assumption that $\VV_{\nu^{M,R}}[Q_{M,R}]$ is approximately constant, independent of $M$ and $R$, it is hence sufficient to choose $N \gtrsim \eps^{-2}$ and $M \gtrsim \eps^{-1/\alpha}$ to get a MSE of $\mathcal{O}(\eps^2)$. 

To bound the computational cost to achieve this error, the so called {\em $\eps$-cost}, we
assume that one sample $Q_{M,R}^n$ can be obtained at cost $\mathcal
C(Q_{M,R}^n) \lesssim M^{\gamma}$, for some $\gamma > 0$. Thus, with $N \gtrsim \eps^{-2}$ and $M \gtrsim \eps^{-1/\alpha}$, the $\eps$--cost of our MCMC estimator can be bounded by
\begin{equation}
\label{eq:MCcost}
\mathcal C_\eps(\widehat Q^\mathrm{MC}_N) \lesssim NM^\gamma \lesssim \eps^{-2-\gamma/\alpha}.
\end{equation}

In many practical applications, especially in subsurface flow, both the
discretisation parameter $M$ and the length of the input
$R$ need to be very large in order for $\EE_{\nu^{M,R}}\left[
  Q_{M,R}\right]$ to be a good approximation to $\EE_{\rho}\left[
    Q\right]$. Moreover, 
as outlined, we need to use a large number of samples $N$ in order to get an accurate MCMC
estimator with a small MSE. Since each sample requires the evaluation
of the likelihood $\mathcal L_M(F_\mathrm{obs} | \theta^n)$, and this
is very expensive when $M$ and $R$ are large, the standard MCMC
estimator \eqref{eq:mcmcest} is often too expensive in practical
situations. Additionally, the acceptance rate of the algorithm can
be very low when $R$ is large. This means that the covariance between
the different samples will decay more slowly, which again makes the
hidden constant in Assumption A1 larger, and the number of samples we
have to take 
increases even further.

To overcome the prohibitively large computational cost of the standard MCMC estimator \eqref{eq:mcmcest}, we will now introduce a new multilevel version of the estimator.

\section{Multilevel Markov chain Monte Carlo algorithm}
\label{sec:mlmcmc}
The main idea of multilevel Monte Carlo (MLMC) simulation is very simple. We
sample not just from one approximation $Q_{M,R}$ of $Q$, but from several. Let
us recall the main ideas from \cite{giles08,cgst11}. 

Let $\{M_\ell\}_{\ell=0}^L \subset \mathbb{N}$ be an increasing sequence in $\mathbb{N}$, i.e. $M_0 < M_1 < \ldots < M_L =: M$, and assume for 
simplicity that there exists an $s \in \mathbb{N}\backslash\{1\}$ such that
\begin{equation}
\label{eq:growthm}
M_{\ell} = s \, M_{\ell-1}\,, \qquad \text{for all } \ell = 1,\ldots,L.
\end{equation}
We also choose a {(not necessarily strictly)} increasing sequence {$\{R_\ell\}_{\ell=0}^L \subset \mathbb{N}$, i.e. $R_\ell \ge R_{\ell-1}$, for all $\ell=1,\ldots,L$}. 
For each level $\ell$, denote correspondingly the parameter vector by $\theta_\ell \in \mathbb{R}^{R_\ell}$, the quantity of interest by $Q_\ell := Q_{M_\ell, R_\ell}$, the posterior distribution by $\nu^\ell := \nu^{M_\ell, R_\ell}$ and the posterior density by $\pi^\ell := \pi^{M_\ell, R_\ell}$. For simplicity we assume that the parameter vectors $\{\theta_\ell\}_{\ell=0}^L$ are nested, i.e. that $\theta_{\ell-1}$ is a subset of $\theta_\ell$, and that the elements of $\theta_\ell$ are independent.

As for multigrid methods applied to discretised (deterministic) PDEs,
the key is to avoid estimating the expected value of $Q_\ell$ directly on level $\ell$,
but instead to estimate the correction with respect to the next lower level. Since in the context of MCMC simulations, the target distribution $\nu^\ell$ depends on $\ell$, the new multilevel MCMC (MLMCMC) estimator has to be defined carefully. We will use the identity
\begin{equation}
\EE_{\nu^L}[Q_L] \ = \ \EE_{\nu^0}[Q_{0}] \;+ \;\sum_{\ell=1}^L \left(\EE_{\nu^\ell} [Q_\ell] - \EE_{\nu^{\ell-1}}[ Q_{\ell-1}]\right)
\label{eq:identity_l}
\end{equation}
as a basis. Note that in the case where the distributions are the same, the above reduces to the telescoping sum used for multilevel Monte Carlo estimators based on i.i.d samples.

The idea
is now to estimate each of the terms on the right hand side of \eqref{eq:identity_l} separately, in such a way that the variance of the resulting multilevel estimator is small. In particular, we will estimate each term in \eqref{eq:identity_l} by an MCMC estimator. The first term $\EE_{\nu^0}[Q_0]$ can be estimated using the standard MCMC estimator in Algorithm~1, i.e. $\widehat Q_{0, N_0}^\mathrm{MC}$ as in \eqref{eq:mcmcest} with $N_0$ samples. We need to be more careful in estimating the differences $\EE_{\nu^\ell} [Q_\ell] - \EE_{\nu^{\ell-1}}[ Q_{\ell-1}]$, and build an effective two-level version of Algorithm~1. {For every $\ell \geq 1$, we denote $Y_\ell := Q_\ell - Q_{\ell-1}$ and define the estimator on level $\ell$ as
\begin{equation}
\label{eq:levelest}
\widehat Y_{\ell, N_\ell}^{\mathrm{MC}} := \frac{1}{N_\ell} \sum_{n=n_0^\ell+1}^{n_0^\ell+N_\ell} Y_\ell^{n} = \frac{1}{N_\ell} \sum_{n=n_0^\ell+1}^{n_0^\ell+N_\ell} Q_\ell(\theta_\ell^n) - Q_{\ell-1}(\Theta_{\ell-1}^n),
\end{equation}
where $n_0^\ell$ again denotes the burn-in of the estimator, $N_\ell$ is the 
number of samples on level $\ell$ and $\Theta_{\ell-1}$ has the same dimension as $\theta_{\ell-1}$. 
The main ingredient in this two--level estimator is a judicious choice of 
the two Markov chains $\{\theta_\ell^n\}$ and $\{\Theta_{\ell-1}^n\}$ 
(see Section \ref{sec:twolevel}).
The full MLMCMC estimator is defined as\vspace{-1ex}
\begin{equation}\label{eq:mlmcmcest}
\widehat Q^\mathrm{ML}_{L,\{N_\ell\}} \ := \ \widehat Q_{0, N_0}^\mathrm{MC} \;+\; \sum_{\ell=1}^L \widehat Y_{\ell,N_\ell}^\mathrm{MC},
\end{equation}
where it is important that the two chains $\{\theta^n_\ell\}_{n \in
  \mathbb{N}}$ and $\{\Theta^n_\ell\}_{n \in \mathbb{N}}$, that are
used in $\widehat Y_{\ell,N_\ell}^\mathrm{MC}$ and in $\widehat
Y_{\ell+1,N_{\ell+1}}^\mathrm{MC}$ respectively, are drawn from the
same posterior distribution $\nu^\ell$, so that $\widehat
Q^\mathrm{ML}_{L,\{N_\ell\}}$ is an unbiased estimator of
$\EE_{\nu^L}[Q_L]$.

There are two main ideas {in \cite{giles08,cgst11}} underlying the reduction in computational cost associated with the multilevel estimator. Firstly, samples of $Q_\ell$, for $\ell < L$, are cheaper to compute than samples of~$Q_L$, reducing the cost of the estimators on the coarser levels for any fixed number of samples.
Secondly, if the variance of $Y_\ell = Q_\ell(\theta_\ell) - Q_{\ell-1}(\Theta_{\ell-1})$ tends to 0 as $\ell \rightarrow \infty$, we need only a small number of samples to obtain a sufficiently accurate estimate of the expected value of $Y_\ell$ on the fine grids, and so the computational effort on the fine grids is also greatly reduced. 

By using the telescoping sum \eqref{eq:identity_l} 
and by sampling from the posterior distribution $\nu^\ell$ on level $\ell$, we ensure that a sample of $Q_\ell$, for $\ell < L$, is indeed cheaper to compute than
a sample of $Q_L$. It remains to ensure that the variance of $Y_\ell = Q_\ell(\theta_\ell) - Q_{\ell-1}(\Theta_{\ell-1})$ tends to 0 as $\ell \rightarrow \infty$. This will be ensured by the choice of $\theta_\ell$ and $\Theta_{\ell-1}$. Note that crucially, this requires the two chains $\{\theta_\ell^n\}$ and $\{\Theta_{\ell-1}^n\}$ to be correlated. However, as long as the stationary marginal distributions of $\{\theta_\ell^n\}$ and $\{\Theta_{\ell-1}^n\}$ are $\nu^\ell$ and $\nu^{\ell-1}$ respectively, this correlation does not introduce any bias in the telescoping sum \eqref{eq:identity_l}.

\subsection{The estimator for $Q_\ell - Q_{\ell-1}$}
\label{sec:twolevel}

Let us fix $1 \le \ell \le L$.
 The challenge is now to generate the chains $\{\theta_\ell^n\}_{n \in \mathbb N}$ and $\{\Theta_{\ell-1}^n\}_{n \in \mathbb N}$ such that the variance of $Y_\ell$ is small. To this end, we partition the chain $\theta_\ell$ into two parts: the entries which are present already on level $\ell-1$ (the ``coarse'' modes), and the new entries on level $\ell$ (the ``fine'' modes): 
\begin{equation*}
\theta_\ell = [\theta_{\ell,C} \, , \, \theta_{\ell, F}],
\end{equation*}
where $\theta_{\ell,C}$ has length $R_{\ell-1}$, i.e. the same length as $\Theta_{\ell-1}$. The vector $\theta_{\ell,F}$ has length $R_\ell - R_{\ell-1}$.

An easy way to construct $\theta_\ell^n$ and $\Theta_{\ell-1}^n$ 
such that the variance of $Y_\ell$ is small, 
would be to generate $\theta_\ell^n$ first, and then simply use
$\Theta_{\ell-1}^n = \theta_{\ell,C}^n$. However, since we require
$\Theta_{\ell-1}^n$ to come from a Markov chain with stationary
distribution $\nu^{\ell-1}$, and $\theta_{\ell}^n$ comes from the
distribution $\nu^{\ell}$, this approach would lead to additional
bias. 
We do, however, use a similar idea in Algorithm 2.

\begin{figure}[ht]
\begin{framed}
\noindent
{\bf ALGORITHM 2. (Metropolis Hastings MCMC for $Q_{\ell} - Q_{\ell-1}$)} \vspace{2ex} \\ 
Choose initial states $\Theta_{\ell-1}^0 \sim \nu^{\ell-1}$ and $\theta_{\ell}^0 := [\Theta_{\ell-1}^0 \,,\,\theta_{\ell,F}^0]$. For $n \geq 0$:
\begin{itemize} 
\item {\bf On level $\ell-1$:}
Generate an independent sample $\Theta_{\ell-1}^{n+1}$ from the distribution $\nu^{\ell-1}$.
\item {\bf On level $\ell$:}
Given $\theta_{\ell}^n$ and $\Theta_{\ell-1}^{n+1}$, generate
$\theta_{\ell}^{n+1}$ using Algorithm 1 with the specific proposal
distribution $q^{\ell}_{\mathrm{ML}}(\theta_{\ell}' \, | \,
\theta_{\ell}^n)$ induced by taking $\theta_{\ell,C}' :=
\Theta_{\ell-1}^{n+1}$ and by generating a proposal for
$\theta_{\ell,F}'$ from some proposal distribution
$q_{\mathrm{ML}}^{\ell,F}(\theta_{\ell,F}'\, | \,\theta_{\ell,F}^n)$
that is independent of the coarse modes. The acceptance probability is\vspace{-1.5ex}
$$
\alpha^{\ell}_{\mathrm{ML}}(\theta_{\ell}' \, | \, \theta_{\ell}^n) \;=\; 
\min \left\{ 1, \frac{\pi^{\ell}(\theta_{\ell}') \, q^{\ell}_{\mathrm{ML}}(\theta_{\ell}^n | \theta_{\ell}')}{\pi^{\ell}(\theta_{\ell}^n)\, q^{\ell}_{\mathrm{ML}}(\theta_{\ell}'  |  \theta_{\ell}^n)}\right\}. \vspace{-3ex}
$$

\end{itemize}
\end{framed}
\end{figure}

Let us for the moment assume that we have a way of producing
  i.i.d. samples from the posterior distribution $\nu^{\ell-1}$. Since
  the distributions $\nu^{\ell-1}$ and $\nu^\ell$ are both
  approximations of the true posterior distribution $\rho$, and differ
  only in the choice of approximation parameters $M$ and $R$, the
  distributions $\nu^{\ell-1}$ and $\nu^\ell$ will, for sufficiently
  large $\ell$, be very similar. The distribution $\nu^{\ell-1}$ is
  hence an ideal candidate for the proposal distribution on level
  $\ell$, and this is what is used in Algorithm~2. First, we generate
  a sample $\Theta_{\ell-1}^{n+1}$ from the distribution
  $\nu^{\ell-1}$, which is independent of the previous sample
  $\Theta_{\ell-1}^{n}$. We will use the independence of these samples
  in Lemma \ref{lem:all}. Based on $\Theta_{\ell-1}^{n+1}$, we then
  generate $\theta_\ell^{n+1}$ using a new two-level proposal density
  $q_{\mathrm{ML}}^{\ell}$ in conjunction with the usual
  Metropolis-Hastings accept/reject step in Algorithm~1. In
  particular, to make a proposal on level $\ell$, we take
  $\theta_{\ell,C}'=\Theta_{\ell-1}^{n+1}$ and independently generate
  $\theta_{\ell,F}'$ from a proposal distribution
  $q_{\mathrm{ML}}^{\ell,F}$ for the fine modes, which can again be a simple random walk or the pCN algorithm.

At each step in Algorithm 2, there are two different outcomes, depending on whether we accept or reject on level~$\ell$. The different possibilities  
are given in Table \ref{tab:outcomes}. Observe that when we accept on level~$\ell$, we have $\theta_{\ell,C}^{n+1} = \Theta_{\ell-1}^{n+1}$, i.e. the coarse modes are the same. If, on the other hand, we reject on level $\ell$, we crucially return to the previous state $\theta_\ell^n$ on that level, which means that the coarse modes of the two states may differ.
\begin{table}[h]\label{tab:twolev}
\begin{center}\vspace{1ex}
\begin{tabular}{|c|c|c|}
\hline
 Level $\ell$ test & $\Theta_{\ell-1}^{n+1}$ & $\theta_{\ell,C}^{n+1}$ \\\hline
accept & $\Theta_{\ell-1}^{n+1}$ & $\Theta_{\ell-1}^{n+1}$   \\ \hline
reject & $\Theta_{\ell-1}^{n+1}$  & $\theta_{\ell,C}^n$   \\ \hline
\end{tabular}
\end{center}
\caption{\label{tab:outcomes}Possible states of $\Theta_{\ell-1}^{n+1}$ and $\theta_{\ell,C}^{n+1}$ in Algorithm 2.}
\end{table}

In general, this ``divergence'' of the coarse modes may mean that the
variance of $Y_\ell$ does not go to $0$ as $\ell \to \infty$ for a
particular application. But provided the modes are ordered according
to their relative ``influence'' on the likelihood
$\mathcal{L}(F_\text{obs} | \theta)$, we can guarantee that
$\alpha^\ell_{\mathrm{ML}}(\theta_{\ell}' | \theta_{\ell}^n) \to 1$ and thus that the variance of $Y_\ell$ does in fact tend to 0 as $\ell \to \infty$. We will show this for a subsurface flow application in Section \ref{sec:mod}.

The specific proposal distribution $q^\ell_{\mathrm{ML}}$ in 
Algorithm 2 can be computed very easily and at no additional cost,
leading to a simple formula for the ``two-level'' acceptance
probability~$\alpha^\ell_{\mathrm{ML}}$.

\begin{lemma}\label{lem:all}  Let $\ell \ge 1$. Then
\[
{\alpha^{\ell}_{\mathrm{ML}}(\theta_{\ell}' \, | \, \theta_{\ell}^n)} \;=\; 
\min \left\{ 1, \frac{\pi^{\ell}(\theta_{\ell}') \,  {\pi^{\ell-1}(\theta_{\ell,C}^{n})\, q_{\mathrm{ML}}^{\ell,F}(\theta_{\ell,F}^n | \theta_{\ell,F}')}}{\pi^{\ell}(\theta_{\ell}^n) \,  {\pi^{\ell-1}(\theta_{\ell,C}')\, q_{\mathrm{ML}}^{\ell,F}(\theta_{\ell,F}' | \theta_{\ell,F}^n)}}\right\}
\]
and the induced transition kernel $K^{\ell}_{\mathrm{ML}}$ satisfies detailed balance.\pagebreak

Furthermore, if the distribution $q_{\mathrm{ML}}^{\ell,F}$ is either (i) symmetric, or (ii) the pCN proposal distribution, then\vspace{-1ex}
\[
\alpha^{\ell}_\mathrm{ML}(\theta_\ell' \, | \, \theta_\ell^n) \;=\; 
\left\{ \begin{array}{ll}
\displaystyle \min \left\{ 1, \frac{\pi^\ell(\theta_\ell') \,
    {\pi^{\ell-1}(\theta_{\ell,C}^{n})}}{\pi^\ell(\theta_\ell^n) \,
    {\pi^{\ell-1}(\theta_{\ell,C}')}}\right\}, & \text{Case (i)},\\
 & \\[-1.5ex]
{\displaystyle \min \left\{ 1, \frac{\mathcal L_\ell(F_\mathrm{obs}\, |\,
    \theta_\ell') \, {\mathcal L_{\ell-1}(F_\mathrm{obs}\, |\,
    \theta_{\ell,C}^n)}}{\mathcal
    L_\ell(F_\mathrm{obs}\, |\, \theta_\ell^n) \, {\mathcal
    L_{\ell-1}(F_\mathrm{obs}\, |\, \theta_{\ell,C}')}} \right\},}
\ \ & \text{Case (ii).}
\end{array} \right.
\]
\end{lemma}
\begin{proof}
Since the proposals for the coarse modes $\theta_{\ell,C}$ and for the
fine modes $\theta_{\ell,F}$ are generated independently, the proposal
density $q^\ell_\mathrm{ML}(\theta_\ell' \, | \theta_\ell^n)$ can be written as a product of densities on the two parts of $\theta_\ell$, {i.e. $q^{\ell,C}_\mathrm{ML}$ and $q^{\ell,F}_\mathrm{ML}$}. For the coarse part of the proposal distribution, we simply have $q^{\ell,C}_\mathrm{ML}(\theta_{\ell,C}' |  \theta_{\ell,C}^n) = \pi^{\ell-1}(\theta_{\ell,C}')$ and $q^{\ell,C}_\mathrm{ML}(\theta_{\ell,C}^n |  \theta_{\ell,C}') = \pi^{\ell-1} (\theta_{\ell,C}^n)$.

This completes the proof of the first result. Detailed balance for $K^{\ell}_\mathrm{ML}$ follows trivially due to the Metropolis-Hastings construction.
The corollary for symmetric distributions $q_\mathrm{ML}^{\ell,F}$ follows by definition. The corollary for pCN proposals follows from the identity $q_\mathrm{ML}^{\ell,F}(\theta_{\ell,F}^n | \theta_{\ell,F}') / q_\mathrm{ML}^{\ell,F}(\theta_{\ell,F}' | \theta_{\ell,F}^n) = \pi_0^{\ell,F}(\theta_{\ell,F}^n) / \pi_0^{\ell,F}(\theta_{\ell,F}')$ (see, e.g. \cite{cds12}), together with the factorisation $\pi_0^\ell(\theta_\ell) = \pi_0^{\ell-1}(\theta_{\ell,C}) \, \pi_0^{\ell,F}(\theta_{\ell,F})$.
\end{proof}

\subsection{Recursive sub-sampling to generate i.i.d. samples from $\nu^{\ell-1}$}

In practice, it will not be possible to generate independent samples
of the coarse level posterior distribution $\nu^{\ell-1}$ directly. We
instead suggest approximating independent samples of $\nu^{\ell-1}$
using Algorithm~1 in the following manner: After a sufficiently long
burn-in period, Algorithm~1 will produce samples which are
(approximately) distributed according to $\nu^{\ell-1}$. Although the
samples produced in this way are correlated, the correlation between
the $n$th and $(n+j)$th sample decays as $j$ increases, and for
sufficiently large $j$, the samples $\Theta_{\ell-1}^n$ and
$\Theta_{\ell-1}^{n+j}$ will be nearly uncorrelated. Hence, an i.i.d
sequence of samples of $\nu^{\ell-1}$ can be approximated by
subsampling a chain $\{\Theta_{\ell-1}^n\}_{n \in
  \mathbb N}$ generated by Algorithm 1 with, e.g., the pCN proposal
distribution.

This procedure can be applied very  naturally in a recursive
manner. Starting on  the coarsest level, burning in a Markov chain of samples
and subsampling this chain to produce (nearly) independent samples
from $\nu^0$ we can then apply Algorithm~2 to produce a Markov chain
of samples from $\nu^1$. This can then be subsampled again to apply
Algorithm~2 on level 2. Continuing in this way, we can recursively
produce independent samples from $\nu^{\ell-1}$ for any $\ell >
0$. See Algorithm~3 in Section \ref{sec:num} for details.

Although, in general the i.i.d. samples of $\nu^{\ell-1}$ will in practice have to be
approximated,  for the analysis of our multilevel algorithm we will
assume that the chains $\{\Theta_{\ell-1}^n\}_{n \in \mathbb N}$ and $\{\theta_{\ell}^n\}_{n
  \in \mathbb N}$ are generated as in Algorithm 2.
The additional bias introduced in the practical Algorithm 3 below is
in fact so small that we did initially not detect it in our numerical
experiments, even for very short subsampling rates.

\subsection{Convergence analysis of the multilevel MCMC estimator}
\label{sec:mlmcmc_conv}
Let us now move on to convergence properties of the multilevel
estimator. As in Section \ref{sec:mcmc_conv}, we define, for all $\ell=0,\dots,L$, the sets
\begin{align*}
\mathcal E^\ell &= \{ \theta_\ell : \pi^{\ell}(\theta_\ell) > 0\}, \nonumber \\
\mathcal D^\ell &= {\{ \theta_\ell : q^{\ell}_\mathrm{ML}(\theta_\ell \, | \,\theta^*_\ell) > 0 \text{ for some } \theta_\ell^* \in \mathcal E^\ell\}}.
\end{align*}

The following convergence results follow from the classical results, due to the telescoping sum property \eqref{eq:identity_l} and the algebra of limits.

\begin{lemma}\label{lem:stat_l} Provided $\mathcal E^\ell \subset \mathcal D^\ell$, $\nu^\ell$ is a stationary marginal distribution of the chain $\{\theta^n_\ell\}_{n \in \mathbb N}$.
\end{lemma}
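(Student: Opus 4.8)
The plan is to reduce the statement to the single-level result, Lemma~\ref{lem:stat}, by showing that the level-$\ell$ chain $\{\theta_\ell^n\}_{n\in\mathbb N}$ is itself an ordinary Metropolis--Hastings chain for the target $\pi^\ell$, namely the one with proposal $q^\ell$ and acceptance probability $\alpha^\ell$. Once this is established, the hypotheses of Lemma~\ref{lem:stat} hold with $(\pi^{M,R},q,\mathcal E,\mathcal D)$ replaced by $(\pi^\ell,q^\ell,\mathcal E^\ell,\mathcal D^\ell)$: the condition $\mathcal E^\ell\subset\mathcal D^\ell$ guarantees that the acceptance ratio is well defined on the whole support of $\pi^\ell$, and stationarity of $\pi^\ell$ then follows by the same detailed-balance argument as in Lemma~\ref{lem:stat}, which uses only the Metropolis--Hastings form of $\alpha^\ell$ and no special structure of the proposal.

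The step that does the real work, and which I expect to be the main obstacle, is confirming that $K_\ell$ genuinely has this standard Metropolis--Hastings form for the target $\pi^\ell$. The difficulty is that $q^\ell$ is not an externally prescribed proposal density but the \emph{induced} two-level proposal of Algorithm~2, in which the coarse block $\theta_{\ell,C}'$ is itself produced by a Metropolis--Hastings step on level $\ell-1$. This gives $q^\ell$ the composite form \eqref{eq:prop}, carrying the coarse acceptance factor $\alpha^{\ell,C}$ inside it, and in particular a singular component whenever the coarse proposal is rejected, so that only the fine modes $\theta_{\ell,F}$ are updated (cf.\ the last two rows of Table~\ref{tab:outcomes}). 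I would dispose of this using Lemma~\ref{lem:all}: the cancellation carried out there shows that, for exactly this $q^\ell$, the Metropolis--Hastings ratio $\pi^\ell(\theta_\ell')\,q^\ell(\theta_\ell^n | \theta_\ell')/[\pi^\ell(\theta_\ell^n)\,q^\ell(\theta_\ell' | \theta_\ell^n)]$ reduces to the quantity defining $\alpha^\ell$. Hence $\alpha^\ell$ is precisely the correct acceptance probability for the pair $(\pi^\ell,q^\ell)$, the kernel $K_\ell$ is the associated Metropolis--Hastings kernel, and the reduction to Lemma~\ref{lem:stat} is complete.
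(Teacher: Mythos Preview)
Your approach is correct and matches the paper's: the paper states the lemma without an explicit proof, noting only that it follows from the classical results (i.e., Lemma~\ref{lem:stat}), since Algorithm~2 defines the level-$\ell$ update explicitly as an instance of Algorithm~1 with proposal $q^\ell$ and target $\pi^\ell$. Your appeal to Lemma~\ref{lem:all} is not actually needed, because $\alpha^\ell$ is already \emph{defined} in Algorithm~2 as the Metropolis--Hastings acceptance probability for the pair $(\pi^\ell,q^\ell)$; Lemma~\ref{lem:all} merely simplifies that expression.
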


\begin{theorem}\label{thm:mlmcconv}
Suppose {that for all $\ell=0, \dots, L$,} $\EE_{\nu^\ell}\left[ |Q_\ell| \right] < \infty$ and 
\begin{equation}\label{eq:mlirreducible}
q^\ell_\mathrm{ML}(\theta_\ell \, | \, \theta_\ell^*) > 0, \quad \text{for all } \ \theta_{\ell},
  \theta_\ell^* \in \mathcal E^\ell.
\end{equation}
Then
\begin{equation*}
\lim_{\{N_\ell\} \rightarrow \infty}  \widehat Q^\mathrm{ML}_{L, \{N_\ell\}} = \EE_{\nu^L}\left[ Q_L \right], \qquad \text{for any } \ \theta_\ell^0 \in \mathcal E^\ell \ \ \text{and} \ \ n_0^\ell \geq 0.
\end{equation*} 
\end{theorem}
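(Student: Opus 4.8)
The plan is to exploit the fact that the multilevel estimator \eqref{eq:mlmcmcest} is a finite sum of $L+1$ \emph{independent} sub-estimators, together with the telescoping identity \eqref{eq:identity_l}, and to reduce everything to the single-level ergodic result Theorem \ref{thm:mcconv}. Since the sum over $\ell$ is finite, the limit $\{N_\ell\}\to\infty$ may be taken in each term separately and the results added (``algebra of limits''). Hence it suffices to show that each sub-estimator converges, in the sense of Theorem \ref{thm:mcconv}, to the corresponding term on the right-hand side of \eqref{eq:identity_l}.

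For the level-$0$ term, $\widehat Q_{0,N_0}^\mathrm{MC}$ is exactly the standard estimator \eqref{eq:mcmcest} for the chain $\{\theta_0^n\}$ produced by Algorithm~1 with target $\pi^0$. Its hypotheses hold by assumption: $\EE_{\pi^0}[|Q_0|]<\infty$, and by \eqref{eq:mlirreducible} at $\ell=0$ the irreducibility condition $q^0(\theta_0\,|\,\theta_0')>0$ on $\mathcal E^0\times\mathcal E^0$. Theorem \ref{thm:mcconv} then gives $\widehat Q_{0,N_0}^\mathrm{MC}\to\EE_{\pi^0}[Q_0]$ as $N_0\to\infty$, for any $\theta_0^0\in\mathcal E^0$ and $n_0^0\ge 0$.

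For $\ell\ge 1$ I would first use linearity to split the difference estimator as
\[
\widehat Y_{\ell,N_\ell}^\mathrm{MC} = \frac{1}{N_\ell}\sum_{n=n_0^\ell}^{n_0^\ell+N_\ell} Q_\ell(\theta_\ell^n) \;-\; \frac{1}{N_\ell}\sum_{n=n_0^\ell}^{n_0^\ell+N_\ell} Q_{\ell-1}(\Theta_{\ell-1}^n),
\]
and then apply Theorem \ref{thm:mcconv} to each average. The second average is an ordinary single-level estimator for the coarse chain $\{\Theta_{\ell-1}^n\}$, generated by Algorithm~1 on level $\ell-1$ with target $\pi^{\ell-1}$; its hypotheses hold, with integrability $\EE_{\pi^{\ell-1}}[|Q_{\ell-1}|]<\infty$ and the irreducibility of $q^{\ell,C}$ following from the positivity of $q^\ell$ in \eqref{eq:mlirreducible} through the factorisation \eqref{eq:prop} of Lemma \ref{lem:all}. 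For the first average, the chain $\{\theta_\ell^n\}$ has transition kernel $K_\ell$ and, by Lemma \ref{lem:stat_l}, stationary distribution $\pi^\ell$; combined with $\EE_{\pi^\ell}[|Q_\ell|]<\infty$ and \eqref{eq:mlirreducible}, Theorem \ref{thm:mcconv} yields convergence to $\EE_{\pi^\ell}[Q_\ell]$. Therefore $\widehat Y_{\ell,N_\ell}^\mathrm{MC}\to\EE_{\pi^\ell}[Q_\ell]-\EE_{\pi^{\ell-1}}[Q_{\ell-1}]=\EE_{\pi^\ell,\pi^{\ell-1}}[Y_\ell]$ by \eqref{eq:double_exp}.

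Adding the $L+1$ limits and telescoping via \eqref{eq:identity_l} gives $\lim_{\{N_\ell\}\to\infty}\widehat Q^\mathrm{ML}_{L,\{N_\ell\}}=\EE_{\pi^0}[Q_0]+\sum_{\ell=1}^L\big(\EE_{\pi^\ell}[Q_\ell]-\EE_{\pi^{\ell-1}}[Q_{\ell-1}]\big)=\EE_{\pi^L}[Q_L]$, as claimed. The one genuinely delicate point — the step I expect to require the most care — is the application of Theorem \ref{thm:mcconv} to the fine chain $\{\theta_\ell^n\}$: since its coarse proposal is $\Theta_{\ell-1}^{n+1}$, the fine chain is coupled to the coarse chain, so one must confirm that it really is a Markov chain with kernel $K_\ell$ and stationary distribution $\pi^\ell$, rather than only the \emph{joint} process $(\theta_\ell^n,\Theta_{\ell-1}^n)$ being Markovian. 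This is exactly what Lemma \ref{lem:all} (supplying $q^\ell$ and $\alpha^\ell$) and Lemma \ref{lem:stat_l} are there to provide, and I would invoke them explicitly at this step.
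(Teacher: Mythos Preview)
Your proposal is correct and follows precisely the approach the paper indicates: the paper's only justification is the sentence ``The following convergence results follow from the classical results, due to the telescoping sum property \eqref{eq:identity_l} and the algebra of limits,'' and you have spelled out exactly that argument, reducing each summand to an application of Theorem~\ref{thm:mcconv}. Your closing caveat about whether the fine chain $\{\theta_\ell^n\}$ is genuinely Markov with kernel $K_\ell$ (given that the coarse proposal is $\Theta_{\ell-1}^{n+1}$, which depends on $\Theta_{\ell-1}^n$ rather than on $\theta_{\ell,C}^n$) is well observed; the paper does not address it beyond Lemmas~\ref{lem:all} and~\ref{lem:stat_l}, and you are right to flag it as the place where care is needed.
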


Let us have a closer look at the irreducibility condition
\eqref{eq:mlirreducible}. As in the proof of Lemma \ref{lem:all}, we have 
\[
q^\ell_\mathrm{ML}(\theta_\ell | \theta^*_\ell) = \pi^{\ell-1}( \theta_{\ell,C} ) \, q^{\ell,F}_\mathrm{ML}(\theta_{\ell,F}  |  \theta_{\ell,F}^*)
\]
and thus
  \eqref{eq:mlirreducible} holds, if and only if 
  $\pi^{\ell-1}( \theta_{\ell,C} )$
  and $q^{\ell,F}_\mathrm{ML} (\theta_{\ell,F} | \theta_{\ell,F}^*)$ are both
  positive, for all
  $(\theta_\ell, \theta_\ell^*) \in \mathcal E^\ell \times \mathcal
  E^\ell$. Both terms are positive for common choices of likelihood, prior 
  and proposal distributions.

We finish the abstract discussion of the new, hierarchical multilevel Metropolis-Hastings MCMC algorithm with the main theorem that establishes a bound on the $\varepsilon$-cost of the multilevel estimator under certain assumptions on the MCMC error, on the (weak) model error, on the strong error between the states on level $\ell$ and on level $\ell-1$ (in the two-level estimator for $Y_\ell$), as well as on the cost $\mathcal{C}_\ell$ to advance Algorithm 2 by one state from $n$ to $n+1$ (i.e.~one evaluation of the likelihood on level $\ell$ and one on level $\ell-1$). As in the case of the standard MCMC estimator, this bound is obtained by quantifying and balancing the decay of the bias and the sampling errors of the estimator. 

To state our assumption on the MCMC error and to define the mean
square error of the estimator, we introduce the following notation. We
define $\bTheta_\ell := \{\theta_\ell^n\}_{n\in\mathbb{N}} \cup
\{\Theta_{\ell-1}^n\}_{n\in\mathbb{N}}$, for $\ell \ge 1$, and
$\bTheta_0 := \{\theta_0^n\}_{n\in\mathbb{N}}$\,, and define by
$\EE_{\bTheta_\ell}$ (respectively $\VV_{\bTheta_\ell}$) the expected
value (respectively variance) with respect to the distribution of
$\bTheta_\ell$ generated by Algorithm 2. Furthermore, let us
  denote by $\nu^{\ell,\ell-1}$ the joint distribution of
  $\theta_\ell$ and $\Theta_{\ell-1}$, for $\ell \geq 1$, which is
  defined by the marginals of $\theta_\ell$ and $\Theta_{\ell-1}$
  being $\nu^\ell$ and $\nu^{\ell-1}$, respectively, and the
  correlation being determined by Algorithm 2. For convenience, we
  define $Y_0 := Q_0$, $\nu^{0,-1} := \nu^0$ and $M_{-1}=R_{-1} = 1$.

\begin{theorem}\label{thm:main}
Let $\eps < \exp[-1]$  and suppose
there are positive constants $\alpha, \alpha', \beta, \beta', \gamma  > 0$ such that 
$\alpha \geq \fracs{1}{2}\,\min(\beta,\gamma)$.
Under the following assumptions, for $\ell=0,\ldots,L$, 
\begin{itemize}
\item[{\bf M1.}]
$\displaystyle
\left| \EE_{\nu^\ell}[Q_{\ell}] - \EE_{\rho}[Q] \right| \ \leq C_\mathrm{M1} \left(\ M_\ell^{-\alpha} + R_\ell^{-\alpha'}\right)
$
\item[{\bf M2.}]
$\displaystyle
\VV_{\nu^{\ell,\ell-1}}[Y_\ell] \ \leq C_\mathrm{M2} \ \left({M_{\ell-1}^{-\beta}+R_{\ell-1}^{-\beta'}}\right)
$
\item[{\bf M3.}]
$\displaystyle
\VV_{\bTheta_\ell}[\widehat Y_{\ell,N_\ell}^\mathrm{MC}] + \left(\EE_{\bTheta_\ell}[\widehat Y_{\ell,N_\ell}^\mathrm{MC}] - \EE_{\nu^{\ell,\ell-1}}[\widehat Y_{\ell,N_\ell}^\mathrm{MC}]\right)^2\ \leq C_\mathrm{M3}  \ N_\ell^{-1} \, \VV_{\nu^{\ell,\ell-1}}[Y_\ell]
$
\item[{\bf M4.}]
$\displaystyle
\mathcal{C}_{\ell} \ \leq C_\mathrm{M4} \ M_{\ell}^{\gamma},
$
\end{itemize}
and provided $R_\ell \gtrsim M_\ell^{\max\{\alpha/\alpha',
    \beta/\beta' \}}$, there exists a number of levels $L$ and a sequence 
$\{N_{\ell}\}_{\ell=0}^L$ such that\vspace{-1ex}
\[
e(\widehat{Q}^\mathrm{ML}_{L, \{N_\ell\}})^2 \ := \ 
\EE_{\cup_\ell\bTheta_\ell}\left[ \big(\widehat{Q}^\mathrm{ML}_{L, \{N_\ell\}}- \EE_{\rho}[Q]\big)^2\right] < \eps^2,
\]
and
\[
\mathcal{C}_\eps(\widehat{Q}^\mathrm{ML}_{L, \{N_\ell\}}) \ \leq C_\mathrm{ML} \ \left\{\begin{array}{ll}
\eps^{-2} \, |\log \eps|             ,    & \text{if } \ \beta>\gamma, \\[0.1in]
\eps^{-2} \, |\log \eps|^3,    & \text{if } \ \beta=\gamma, \\[0.1in]
\eps^{-2-(\gamma\!-\!\beta)/\alpha} \, |\log \eps| , & \text{if } \ \beta<\gamma.
\end{array}\right.
\]
\end{theorem}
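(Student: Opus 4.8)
The plan is to follow the template of the standard multilevel complexity theorem (as in \cite{giles08,cgst11}), but to keep careful track of the extra bias that MCMC sampling introduces on each level. First I would decompose the mean square error. Writing $\widehat{Y}_0 := \widehat{Q}^{\mathrm{MC}}_{0,N_0}$ and using that the $L+1$ chains are mutually independent, the variance of $\widehat{Q}^{\mathrm{ML}}_{L,\{N_\ell\}}$ splits as $\sum_{\ell=0}^L \VV_{\bTheta_\ell}[\widehat{Y}_\ell]$. For the bias, the telescoping identity \eqref{eq:identity_l} together with $\EE_{\pi^\ell,\pi^{\ell-1}}[\widehat{Y}_\ell] = \EE_{\pi^\ell,\pi^{\ell-1}}[Y_\ell]$ gives $\EE_{\cup_\ell\bTheta_\ell}[\widehat{Q}^{\mathrm{ML}}] - \EE_{\pi^L}[Q] = \sum_{\ell=0}^L b_\ell + \EE_{\pi^L}[Q_L - Q]$, where $b_\ell := \EE_{\bTheta_\ell}[\widehat{Y}_\ell] - \EE_{\pi^\ell,\pi^{\ell-1}}[\widehat{Y}_\ell]$ is the finite-sample MCMC bias on level $\ell$ (the second term in M3). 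Squaring and applying Cauchy--Schwarz in the form $(\sum_\ell b_\ell)^2 \le (L+1)\sum_\ell b_\ell^2$ then yields
\begin{equation*}
e(\widehat{Q}^{\mathrm{ML}}_{L,\{N_\ell\}})^2 \ \lesssim \ (L+1)\sum_{\ell=0}^L\Big(\VV_{\bTheta_\ell}[\widehat{Y}_\ell] + b_\ell^2\Big) + \big(\EE_{\pi^L}[Q_L-Q]\big)^2.
\end{equation*}

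Next I would insert the assumptions. Assumption M3 (and the level-$0$ analogue A1) bounds each bracket by $N_\ell^{-1}\VV_{\pi^\ell,\pi^{\ell-1}}[Y_\ell]$, while the hypothesis $R_\ell \gtrsim M_\ell^{\max\{\alpha/\alpha',\beta/\beta'\}}$ lets me absorb the $R$-terms in M1 and M2, so that $V_\ell := \VV_{\pi^\ell,\pi^{\ell-1}}[Y_\ell] \lesssim M_\ell^{-\beta}$ and $|\EE_{\pi^L}[Q_L-Q]| \lesssim M_L^{-\alpha}$. I would then pick the number of levels $L$ as the smallest integer with $M_L^{-\alpha} \lesssim \eps$; by \eqref{eq:growthm} this gives $M_L \eqsim \eps^{-1/\alpha}$ and $L+1 \eqsim |\log\eps|$ (here $\eps < \exp[-1]$ guarantees $L\ge 1$), which forces the discretisation-bias term below $\eps^2/2$.

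It remains to choose $\{N_\ell\}$ so that the sampling term falls below $\eps^2/2$ at minimal cost. With $C_\ell := \mathcal{C}_\ell \lesssim M_\ell^{\gamma}$ from M4, I would minimise $\sum_\ell N_\ell C_\ell$ subject to the budget $\sum_\ell N_\ell^{-1} V_\ell \le \eps^2/(2(L+1))$; a Lagrange-multiplier argument gives the optimum $N_\ell \eqsim \eps^{-2}(L+1)\big(\sum_k\sqrt{V_k C_k}\big)\sqrt{V_\ell/C_\ell}$ and hence
\begin{equation*}
\mathcal{C}_\eps(\widehat{Q}^{\mathrm{ML}}) \ \lesssim \ \frac{L+1}{\eps^2}\Big(\sum_{\ell=0}^L \sqrt{V_\ell C_\ell}\Big)^2, \qquad \sqrt{V_\ell C_\ell} \eqsim M_\ell^{(\gamma-\beta)/2}.
\end{equation*}
Splitting into the regimes $\beta>\gamma$, $\beta=\gamma$, $\beta<\gamma$, the geometric sum $\sum_\ell M_\ell^{(\gamma-\beta)/2}$ is $\mathcal{O}(1)$, $\mathcal{O}(L)$, or $\mathcal{O}(M_L^{(\gamma-\beta)/2}) = \mathcal{O}(\eps^{-(\gamma-\beta)/(2\alpha)})$ respectively, and substituting $L+1\eqsim|\log\eps|$ reproduces the three claimed bounds.

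The step I expect to be the main obstacle, and the genuine departure from the i.i.d.\ multilevel theory, is the treatment of the MCMC biases $b_\ell$. Unlike in standard MLMC these do not vanish, and since they accumulate additively across levels the Cauchy--Schwarz step unavoidably injects the factor $L+1 \eqsim |\log\eps|$; this is exactly the source of the extra logarithmic factor in every regime compared with \cite{cgst11}. A secondary technical point is that the $N_\ell$ must be rounded up to integers, which adds roughly $\sum_\ell C_\ell \eqsim M_L^\gamma \eqsim \eps^{-\gamma/\alpha}$ to the cost; checking that this does not dominate the leading term in any of the three regimes is precisely where the hypothesis $\alpha \ge \tfrac{1}{2}\min(\beta,\gamma)$ is used.
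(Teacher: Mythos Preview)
Your proposal is correct and follows essentially the same approach as the paper: decompose the MSE, apply Cauchy--Schwarz to the accumulated MCMC biases $b_\ell$ to pick up the factor $(L+1)$, use M3 to arrive at the bound $e^2\lesssim(L+1)\sum_\ell N_\ell^{-1}V_\ell + (\EE_{\pi^L}[Q_L-Q])^2$, and then invoke the optimisation argument of \cite{cgst11} with this extra $(L+1)\eqsim|\log\eps|$ factor. The paper's own proof is in fact more terse---it derives the key inequality \eqref{eq:mse_multi} and then defers the remaining cost optimisation entirely to \cite[Theorem~1]{cgst11}---so your explicit Lagrange-multiplier computation, the three-regime case split, and the remark on integer rounding of $N_\ell$ (where $\alpha\ge\tfrac12\min(\beta,\gamma)$ enters) spell out details that the paper leaves to the cited reference.
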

\begin{proof} 
The proof of this theorem is very similar to the proof of the complexity theorem in the case of multilevel estimators based on i.i.d samples (cf.~\cite[Theorem 1]{cgst11}), which can be found in the appendix of \cite{cgst11}. First note that by assumption we have $R_\ell^{-\alpha'} \lesssim M_\ell^{-\alpha}$ and $R_\ell^{-\beta'} \lesssim M_\ell^{-\beta}$.\pagebreak

Furthermore, in the same way as in \eqref{eq:mse2}, we can expand  
\[
e(\widehat{Q}^\mathrm{ML}_{L, \{N_\ell\}})^2 \ \leq \ \VV_{\cup_\ell\bTheta_\ell}\left[\widehat{Q}^\mathrm{ML}_{L, \{N_\ell\}}\right] + 2 \underbrace{\left(\EE_{\cup_\ell\bTheta_\ell}\left[\widehat{Q}^\mathrm{ML}_{L, \{N_\ell\}}\right]-\EE_{\nu^L}\left[\widehat{Q}^\mathrm{ML}_{L, \{N_\ell\}}\right]\right)^2}_{(I)} + 2 \Big(\EE_{\nu^L}[Q_L] - \EE_{\rho}[Q]\Big)^2.
\]
It follows from the Cauchy Schwarz inequality that 
\[
\VV_{\cup_\ell\bTheta_\ell}\left[\widehat{Q}^\mathrm{ML}_{L, \{N_\ell\}}\right] = \sum_{l=0}^L \VV_{\bTheta_\ell}[\widehat Y_{\ell,N_\ell}^\mathrm{MC}] + 2 \sum_{0 \leq \ell < \ell' \leq L}\textrm{Cov}_{\cup_\ell\bTheta_\ell}[\widehat Y_{\ell,N_\ell}^\mathrm{MC},\widehat Y_{\ell',N_{\ell'}}^\mathrm{MC}]  \lesssim (L+1) \sum_{l=0}^L \VV_{\bTheta_\ell}[\widehat Y_{\ell,N_\ell}^\mathrm{MC}]  .
\]
We can bound the second term in the MSE above by
\[
(I) \;  = \; \bigg( \sum_{l=0}^L \left( \EE_{\bTheta_\ell}\left[\widehat Y_{\ell,N_\ell}^\mathrm{MC}\right] - \EE_{\nu^{\ell,\ell-1}}\left[\widehat Y_{\ell,N_\ell}^\mathrm{MC}\right]\right)\bigg)^2 
\; \leq \; (L+1) \sum_{l=1}^L \left( \EE_{\bTheta_\ell}\left[\widehat Y_{\ell,N_\ell}^\mathrm{MC}\right] - \EE_{\nu^{\ell,\ell-1}}\left[\widehat Y_{\ell,N_\ell}^\mathrm{MC}\right]\right)^2,
\]
and thus it follows from Assumption M3  that
\begin{equation}\label{eq:mse_multi}
e(\widehat{Q}^\mathrm{ML}_{L, \{N_\ell\}})^2 \lesssim (L+1) \sum_{\ell=0}^L N_\ell^{-1} \, \VV_{\nu^{\ell,\ell-1}}[Y_\ell] +  \Big(\EE_{\nu^L}[Q_{L}] - \EE_{\rho}[Q] \Big)^2. 
\end{equation}
In contrast to 
i.i.d case, we have an additional factor $(L+1)$ multiplying the
sampling error term on the right hand side of
\eqref{eq:mse_multi}. Hence, in order to make this term less than
$\eps^2/2$, the number of samples $N_\ell$ needs to be increased by a
factor of $(L+1)$ compared to the i.i.d. case, which  also increases
the cost of the multilevel estimator by a factor of $(L+1)$. The remainder of the proof remains identical.

Since $L$ is chosen such that the second term in \eqref{eq:mse_multi} (the bias of the multilevel estimator) is less than $\eps^2/2$, it follows from Assumption M1 that $L+1 \lesssim |\log \eps|$. The bounds on the $\eps$-cost 
then follow as in \cite[Theorem 1]{cgst11}, but with an extra $|\log \eps|$ factor.
\end{proof}

Note that in our proof we do not require the estimators  $\widehat
  Y_{\ell,N_\ell}^\mathrm{MC}$, $\ell=0,\ldots,L$, to be
  independent. However, in practice we found that independent
  estimators lead to a faster absolute performance of the multilevel
  estimator (in terms of cost
  versus error).
 
Assumptions M1 and M4 are the same assumptions as in the single--level case, and are related to the {bias in the model (e.g. due to discretisation) and to} the cost per sample, respectively. Assumption M3 is similar to assumption A1, in that it is a non-asymptotic bound for the sampling errors of the MCMC estimator $\widehat Y_{\ell,N_\ell}^\mathrm{MC}$. For this assumption to hold, it is in general necessary that the chains have been sufficiently burnt in, i.e. that {the values $n_0^\ell$ are} sufficiently large.

\section{Model Problem}
\label{sec:mod}
In this 
section, we will apply the proposed MLMCMC algorithm to a simple model problem arising in subsurface flow modelling. Probabilistic uncertainty quantification in subsurface flow is of interest in a 
number of situations, as for example in risk analysis for radioactive waste 
disposal or in oil reservoir simulation. The classical equations governing 
(steady state) single--phase subsurface flow consist of Darcy's law coupled with an 
incompressibility condition (see e.g. \cite{demarsily,cgss00}):
\begin{equation}\label{eq:mod}
w + k \nabla p = g \quad \text{and} \quad 
\text{div } w = 0, \quad
\text{in } \ D  \subset \mathbb{R}^d, \ d=1,2,3, 
\end{equation}  
subject to suitable 
boundary conditions. In physical terms, $p$ denotes the 
pressure head of the fluid, $k$ is the permeability tensor, 
$w$ is the filtration 
velocity (or Darcy flux) and $g$ is the source term. 

\subsection{Uncertainty quantification}
A typical approach to quantify uncertainty in $p$ and $w$ is to model 
the permeability as a random field $k = k(x,\omega)$ on 
$D \times \Omega$, for some probability space $(\Omega, \mathcal A, \mathbb P)$. The mean and covariance structure of $k$ has to be 
inferred from the (limited) geological information available. This means that \eqref{eq:mod} becomes a system of PDEs 
with random coefficients, which can be written in second order form as
\begin{align}\label{eq:mod2}
-\nabla \cdot (k(x, \omega) \nabla p(x, \omega)) &= 
f(x), \qquad \mathrm{in}\quad D, 
\end{align}
with $f := - \text{div } g$.
This means that the solution $p$ itself will also be a random field on 
$D \times \Omega$. For simplicity, we shall restrict ourselves to Dirichlet conditions $p(\omega,x) = \psi(x)$ on $\partial D$, and assume that the boundary data $\psi$ and the 
source term $g$ are known (and thus deterministic). 

In this general form solving \eqref{eq:mod2} is extremely challenging 
computationally, and so in practice it is common to use relatively simple 
models for $k$ that are as faithful as possible to the 
measurements. One model that has been studied extensively is a log-normal 
distribution for $k$, i.e.~replacing the permeability 
tensor by a scalar valued field whose $\log$ is Gaussian. It 
guarantees that $k > 0$ almost surely (a.s.) in~$\Omega$,
and it allows the permeability to
vary over many orders of magnitude, which is typically the case.

When modelling a whole oil reservoir or a sufficiently
large region around a potential radioactive waste repository,
the correlation length scale for $k$ is typically significantly
smaller than the size of the computational region. 
In addition, typical sedimentation 
processes lead to fairly irregular structures and pore networks. Faithful
models should therefore also only assume limited spatial regularity of $k$.
A covariance function that has been proposed in the application literature
(cf. \cite{hk85}) 
is the following exponential two-point covariance function for $\log k$:
\begin{equation}
\label{eq:cov}
C(x,y) \ := \ \sigma^2 
\mathrm{exp}\left(-\frac{\|x - y\|_r}{\lambda}\right), \qquad x,y \in D,
\end{equation}
where $\|\cdot\|_r$ denotes the $\ell_r$-norm in $\mathbb{R}^d$ and typically 
$r=1$ or $2$. The parameters $\sigma^2$ and $\lambda$ denote
{\em variance} and {\em correlation length}, respectively. In 
subsurface flow applications typically only $\sigma^2 \ge 1$ and 
$\lambda \le \mathrm{diam}\, D$ will be of interest. The choice of
covariance function in \eqref{eq:cov} implies that $k$ is {\em homogeneous} and it follows from Kolmogorov's theorem \cite{daprato_zabczyk} that $k(\cdot,\omega) \in C^{0,t}(D)$ a.s., for any $t < 1/2$. 

For the purpose of this paper, we will assume that $k$ is a log-normal random field, where $\log k$ has mean zero and exponential covariance function \eqref{eq:cov} with $r=1$. However, other models for $k$ are possible, and the required theoretical results can be found in \cite{cst11,tsgu13,teckentrup12}. 

Let us now put model problem \eqref{eq:mod2} into context for the MCMC
and MLMCMC methods described in sections \ref{sec:mcmc} and
\ref{sec:mlmcmc}. The quantity of interest $Q$ is in this case some
functional $\mathcal G$ of the PDE solution $p$, and $Q_{M,R}$ is the
same functional $\mathcal G$ evaluated at a discretised solution
$p_{M,R}$. The discretisation level $M$ denotes the number of degrees
of freedom for the numerical solution of \eqref{eq:mod2} for a given
sample and the parameter $R$ denotes the number of random variables used to
model the permeability $k$. The random vector $X$ will contain the $M$
degrees of freedom of the discrete pressure $p_{M,R}$.

For the spatial discretisation of model problem \eqref{eq:mod2}, we
will use standard, continuous, piecewise linear finite elements
(FEs), see e.g. \cite{brenner_scott,ciarlet} for more details. Other spatial discretisation schemes are possible, see for example \cite{cgst11} for a numerical study with finite volume methods and \cite{gsu12} for a theoretical treatment of mixed finite elements. We choose a regular triangulation $\mathcal T_h$ of mesh width $h$ of our spatial domain $D$, which results in $M = \mathcal{O}(h^{-d})$ degrees of freedom for the numerical approximation. 

In order to apply the proposed MCMC methods to model problem
  \eqref{eq:mod2}, we need to represent the permeability $k$ in terms
  of a set of random variables. For this, we will use the Karhunen-Lo\`eve (KL-) expansion. For the Gaussian field $\log k$, this is an expansion in terms of 
a countable set of independent, standard Gaussian random variables 
$\{\xi_n\}_{n \in \mathbb{N}}$. It is given by
\begin{equation*}
\log k(\omega,x)=  \sum_{n=1}^\infty \sqrt{\mu_n}\phi_n(x)\xi_n(\omega),
\end{equation*}
where $\{\mu_n\}_{n \in \mathbb{N}}$ are the eigenvalues and $\{\phi_n\}_{n \in \mathbb{N}}$ 
the corresponding $L^2$-normalised eigenfunctions of the covariance operator with kernel 
function $C(x,y)$. For more details on its derivation and  
properties, see e.g.~\cite{ghanem_spanos}. We will here only mention 
that the eigenvalues $\{\mu_n\}_{n \in \mathbb{N}}$ are non--negative with 
$\sum_{n\geq 0}\mu_n<\infty.$ For the particular covariance function \eqref{eq:cov} with $r=1$, we have $\mu_n \lesssim n^{-2}$ 
and hence there is an intrinsic ordering of importance in the KL-expansion. Truncating the KL-expansion after $R$ terms, gives an approximation of $k$ in terms of $R$ standard normal random variables, 
\begin{equation}\label{truncated_kl}
k_R(\omega,x)=\exp\left[ \sum_{n=1}^R \sqrt{\mu_n}\phi_n(x)\xi_n(\omega)\right].
\end{equation}

Denote by $\vartheta := \{\xi_n\}_{n \in \mathbb N} \in \R^\mathbb N$
the vector of independent random variables appearing in the
KL-expansion of $\log k$. We will work with prior and posterior
measures on the space $\R^\mathbb N$. To this end, we equip
$\R^\mathbb N$ with the product sigma algebra $\mathcal B :=
\bigotimes_{n \in \mathbb N} \mathcal B^1(\R)$, where $\mathcal
B^1(\R)$ denotes the sigma algebra of Borel sets of $\R$. We denote by
$\rho_0$ the prior measure on $\R^\mathbb N$, defined by $\{\xi_n\}_{n
  \in \mathbb N}$ being independent and identically distributed
(i.i.d) $\mathcal N(0,1)$ random variables, such that
\begin{equation}\label{def:prior} 
\rho_0 = \bigotimes_{n \in \mathbb N} \, g(\xi_n) \, \mathrm{d} \xi_n,
\end{equation}
where $g : \R \rightarrow \R^+$ is the Lebesgue density of a $\mathcal
N(0,1)$ random variable and $d \xi_n$ denotes the one dimensional Lebesgue measure.

We assume that the observed data is finite
dimensional, i.e.~$F_\mathrm{obs} \in \R^m$ for some $m \in \mathbb
N$, and that
\begin{equation}\label{eq:data}
F_\mathrm{obs} = \mathcal F (p(\vartheta)) + \eta,
\end{equation}
where $\mathcal F: H^1(D) \rightarrow \R^m$ is a continuous function
of $p$, the (weak) solution to model problem \eqref{eq:mod} which
depends on $\vartheta$ through $k$. The observational noise $\eta$ is
assumed to be a realisation of a $\mathcal N(0, \sigma_F^2 I_m)$ random variable (independent of $\vartheta$). The parameter $\sigma_F^2$ is a fidelity parameter that indicates the level of observational noise present in $F_\mathrm{obs}$.

With $\rho_0$ as in \eqref{def:prior}, we have $\rho_0(\R^\N) =
1$. Furthermore, since $p$ depends continuously on $\vartheta$
  (see \cite[Propositions 3.6 and 4.1]{charrier12} or \cite[Lemmas
  2.20 and 5.13]{teckentrup_thesis}), the map $\mathcal F \circ p :
  \R^\N \rightarrow \R^m$ is also continuous (by assumption). The posterior distribution, which we will denote by $\rho$, is then known to be absolutely continuous with respect to the prior and satisfies 
\begin{equation}\label{eq:radnik}
\frac{\partial \rho}{\partial  \rho_0}(\vartheta) \, \eqsim \, \exp\left[-\frac{\|F_\mathrm{obs} - \mathcal F (p(\vartheta))\|^2}{2\sigma_F^2}\right] =: \exp\left[ - \Phi(\vartheta; F_\mathrm{obs})\right],
\end{equation}
where $\|\cdot\|$ denotes the Euclidean norm on $\R^m$. The hidden
constant depends only on $F_\mathrm{obs}$ and is generally not known
(for more details see \cite{stuart10} and the references therein). The right hand side of \eqref{eq:radnik} is referred to as the {\em likelihood}.

Since the exact solution $p(\vartheta)$ is not available, the
likelihood $\exp\left[ - \Phi(\vartheta; F_\mathrm{obs})\right]$ needs
to be approximated in practical computations. {We use a truncation 
of the KL-expansion of $\log k$ after $R$ terms and a spatial
approximation $p_{M,R}$  of $p(\vartheta)$ by piecewise linear FEs.
The value of $\sigma_F^2$ may also be changed to $\sigma_{F,M}^2$.} We denote the resulting approximate posterior measure correspondingly by $\rho^{M,R}$, with
\begin{equation}\label{eq:radnik_approx}
\frac{\partial \rho^{M,R}}{\partial \rho_0}(\vartheta) \, \eqsim \, \exp\left[-\frac{\|F_\mathrm{obs} - \mathcal F (p_{M,R}(\vartheta))\|^2}{2\sigma_{F,M}^2}\right] =: \exp\left[ - \Phi^{M,R}(\vartheta; F_\mathrm{obs})\right].
\end{equation}
Since {$\mathcal F  \circ p_{R,M}$} 
only depends on $\theta :=
\{\xi_n\}_{n=1}^R$, the first $R$ components of $\vartheta$, and since
the prior measure factorises as $\rho_0 = \rho_0^R \otimes
\rho_0^\perp$, the approximate posterior measure also factorises
 as $\rho^{M,R} = \nu^{M,R} \otimes \rho^\perp$, where
\begin{equation}\label{eq:radnik_approx_trunc}
\frac{\partial  \nu^{M,R}}{\partial \rho_0^R}(\theta) \, \eqsim \, \exp\left[ - \Phi^{M,R}(\theta; F_\mathrm{obs})\right],
\end{equation}
and $\rho^\perp = \rho_0^\perp$ \cite{ds11}. Note that $\nu^{M,R}$ is a measure on the finite dimensional space $\R^R$. Denoting by $\pi^{M,R}$ and $\pi_0^R$ the densities with respect to the $R$ dimensional Lebesgue measure of $\nu^{M,R}$ and $\rho_0^R$, respectively, it follows from \eqref{eq:radnik_approx_trunc} that
\begin{equation}\label{eq:bayes}
\pi^{M,R}(\theta) \eqsim \exp\left[ - \Phi^{M,R}(\theta; F_\mathrm{obs})\right] \, \pi_0^R (\theta)\, .
\end{equation}

Our goal is to approximate the expected value of a quantity $Q
= \mathcal G (p(\vartheta))$ with respect to the posterior $\rho$,
for some continuous $\mathcal G : H^1(D) \rightarrow \R$. We denote
this expected value by $\EE_\rho[Q] := \int_{\R^\N} \mathcal
G(p(\vartheta)) \, \rho(\mathrm{d} \vartheta)$ and assume that, as $M, R \rightarrow \infty$,
\[
\EE_{\nu^{M,R}}[Q_{M,R}] \rightarrow \EE_\rho[Q],
\]
where $\EE_{\nu^{M,R}}[Q_{M,R}] := \int_{\R^R} \mathcal
G(p_{M,R}(\theta)) \, \nu^{M,R}(\mathrm{d} \theta)$ is a finite
dimensional integral. 

Finally, let us set the notation for our MLMCMC algorithm. To achieve a level-dependent representation of~$k$, we simply truncate the KL-expansion after a sufficiently large, level-dependent number of terms~$R_\ell$, such that the truncation error on each level is bounded by the discretisation error,   
and set $\theta_\ell := \{\xi_n\}_{n=1}^{R_\ell}$. A sequence of
discretisation levels $M_\ell$ satisfying \eqref{eq:growthm} can be
constructed by choosing a coarsest mesh width $h_0$ for the spatial
approximation, and choosing $h_\ell := s^{-\ell} h_0$. A common (but
not necessarily optimal) choice is $s=2$ and uniform refinement
between the levels. We denote the resulting (truncated) {FE} solution by $p_\ell := p_{M_\ell,R_\ell}$.

The prior density $\pi_0^\ell$ of $\theta_\ell$ is simply a standard $R_\ell$-dimensional Gaussian:
\begin{equation}\label{eq:prior}
\pi_0^\ell(\theta_\ell) = \frac{1}{(2\pi)^{R_\ell/2}} \, \exp\left[ -\sum_{j=1}^{R_\ell}\frac{\xi_j^2}{2}\right].
\end{equation}
For the likelihood, we have
\begin{equation}\label{eq:like}
\mathcal L_\ell(F_\mathrm{obs}\, |\, \theta_\ell) \eqsim \exp\left[\frac{- \|F_\mathrm{obs} - F^\ell(\theta_\ell)\|^2}{2\sigma_{F,\ell}^2}\right],
\end{equation}
where $F^\ell(\theta_\ell) = \mathcal F(p_\ell(\theta_\ell))$.
Recall that the coarser levels in our multilevel estimator are
introduced only to accelerate the convergence and that the multilevel
estimator is still an unbiased estimator of the expected value of
$Q_L$ with respect to the posterior $\nu^L$ on the finest level
$L$. Hence, the posterior distributions on the coarser levels
$\nu^\ell$, $\ell=0,\ldots,L-1$, do not have to model the measured
data as faithfully as $\nu^L$. In particular, this means that we can
choose larger values of the fidelity parameter $\sigma_{F,\ell}^2$ on
the coarse levels, which will increase the acceptance probability on
the coarser levels. The growth in
$\sigma_{F,\ell}^2$ has to be controlled, 
as we will see below (cf. Assumption A3).

\subsection{Convergence analysis}
\label{sec:mod_conv}
We now perform a rigorous convergence analysis of the MLMCMC
  estimator $\widehat Q^\mathrm{ML}_{L,\{N_\ell\}}$ introduced in
  Section \ref{sec:mlmcmc} applied to model problem
  \eqref{eq:mod}. 
We will first
  verify that the multilevel estimator is indeed an unbiased
  estimator of $\EE_{\nu^L}[Q_L]$.
To achieve this, we only need to verify the irreducibility condition
\eqref{eq:mlirreducible} in Theorem~\ref{thm:mlmcconv}. As already
noted, for common choices of proposal
distributions, the condition holds true if
$\pi^{\ell-1}(\theta_{\ell,C}) > 0$, for all $\theta_\ell$
s.t.~$\pi^\ell(\theta_\ell) > 0$.  The conclusion follows, since both the prior and the likelihood were chosen as normal distributions and normal distributions have infinite support.
\begin{theorem}
Suppose that for all $\ell=0, \dots, L$, $\EE_{\nu^\ell}\left[ |Q_\ell| \right] < \infty$. Then
\begin{equation*}
\lim_{\{N_\ell\} \rightarrow \infty}  \widehat Q^\mathrm{ML}_{L, \{N_\ell\}} = \EE_{\nu^L}\left[ Q_L \right], \qquad \text{for any } \ \theta_\ell^0 \in \mathcal E^\ell \ \ \text{and} \ \ n_0^\ell \geq 0.
\end{equation*} 
\end{theorem}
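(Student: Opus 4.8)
The plan is to deduce this directly from the abstract convergence result, Theorem~\ref{thm:mlmcconv}, by checking that its hypotheses hold for the concrete prior and likelihood of the model problem. That theorem has exactly two hypotheses: the integrability condition $\EE_{\pi^\ell}[|Q_\ell|] < \infty$ for every $\ell=0,\dots,L$, which is precisely what is assumed here, and the irreducibility condition \eqref{eq:mlirreducible}, namely $q^\ell(\theta_\ell \,|\, \theta_\ell') > 0$ for all $(\theta_\ell,\theta_\ell') \in \mathcal{E}^\ell \times \mathcal{E}^\ell$. Since integrability is given, the entire argument reduces to verifying \eqref{eq:mlirreducible}, after which the claimed limit follows at once from Theorem~\ref{thm:mlmcconv} for any starting values $\theta_\ell^0 \in \mathcal{E}^\ell$ and any burn-in lengths $n_0^\ell \ge 0$.

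To verify \eqref{eq:mlirreducible}, I would use the factorisation of the two-level proposal density already established in \eqref{eq:prop},
\[
q^\ell(\theta_\ell \,|\, \theta_\ell') = \alpha^{\ell,C}(\theta_{\ell,C} \,|\, \theta_{\ell,C}') \, q^{\ell,C}(\theta_{\ell,C} \,|\, \theta_{\ell,C}') \, q^{\ell,F}(\theta_{\ell,F} \,|\, \theta_{\ell,F}'),
\]
so that $q^\ell > 0$ exactly when all three factors are strictly positive. For the two proposal factors $q^{\ell,C}$ and $q^{\ell,F}$ I would fix a proposal mechanism with full support, e.g.\ a random-walk or pCN proposal, whose Gaussian transition densities are strictly positive on all of $\RR^{R_{\ell-1}}$ and $\RR^{R_\ell - R_{\ell-1}}$ respectively, so neither factor can vanish. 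It then remains only to control the coarse acceptance factor $\alpha^{\ell,C}$: from its definition as a minimum with the ratio involving $\pi^{\ell-1}$, it is strictly positive if and only if $\pi^{\ell-1}(\theta_{\ell,C}) > 0$.

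The crux is thus the positivity of the coarse posterior density, and here I would invoke the explicit forms of the prior \eqref{eq:prior} and likelihood \eqref{eq:like}. The Gaussian prior density is strictly positive on all of $\RR^{R_{\ell-1}}$, and the likelihood, a normal density centred at the model response, is likewise strictly positive for every value of its argument. By Bayes' identity \eqref{eq:bayes}, $\pi^{\ell-1} \propto \mathcal{L}_{\ell-1}\,\mathcal{P}_{\ell-1}$ is a product of two everywhere-positive densities, whence $\pi^{\ell-1}(\theta_{\ell,C}) > 0$ for every $\theta_{\ell,C}$. In particular $\mathcal{E}^\ell = \RR^{R_\ell}$ and, by the preceding paragraph, $\mathcal{D}^\ell = \RR^{R_\ell}$, so \eqref{eq:mlirreducible} holds trivially on $\mathcal{E}^\ell \times \mathcal{E}^\ell$ and the inclusion $\mathcal{E}^\ell \subset \mathcal{D}^\ell$ needed for stationarity in Lemma~\ref{lem:stat_l} is automatic as well. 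The conclusion then follows immediately from Theorem~\ref{thm:mlmcconv} (the base case $\ell=0$ being covered by the same reasoning applied to the single-level proposal $q^0$).

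I do not expect a genuine obstacle: the statement is essentially a corollary of Theorem~\ref{thm:mlmcconv}, and its only content is the irreducibility check. The single point deserving care is the assertion that the normal prior and likelihood leave no null region for the coarse chain to miss, i.e.\ that the infinite support of the Gaussians genuinely forces $\pi^{\ell-1} > 0$ everywhere, together with the mild requirement that the chosen proposals $q^{\ell,C}$ and $q^{\ell,F}$ have full support. Both are transparent for the Gaussian model at hand, which is exactly what lets the theorem be stated in this clean form.
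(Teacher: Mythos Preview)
Your proposal is correct and follows essentially the same approach as the paper: reduce to Theorem~\ref{thm:mlmcconv}, then verify the irreducibility condition \eqref{eq:mlirreducible} by noting that the Gaussian prior and Gaussian likelihood have full support, so $\pi^{\ell-1}(\theta_{\ell,C})>0$ everywhere, and the remaining factors in the factorisation of $q^\ell$ are positive for standard (random-walk or pCN) proposals. The paper's own argument is just this reasoning stated in a single short paragraph preceding the theorem.
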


Let us now move on to quantifying the cost of the multilevel estimator, and verify the assumptions in Theorem \ref{thm:main} for our model problem. As mentioned earlier, assumption M3 involves bounding the mean square error of an MCMC estimator, and a proof of M3 is beyond the scope of this paper. Results of this kind can be found in e.g.~\cite{rudolf_thesis, hsv11}. We will also not address M4, which is an assumption on the cost of obtaining one sample of $Q_\ell$. In the best case, with an optimal linear solver to solve the discretised (FE) equations for each sample, M4 is satisfied with $\gamma = 1$.

We will address assumptions M1 and M2, which are the assumptions related to the discretisation errors in the quantity of interest $Q$ and the measure $\rho$. For ease of presentation, we will for the remainder of this section assume that
$\log k$ has mean zero and exponential covariance function
\eqref{eq:cov} with $r=1$, and that $\psi$ and $f$ in \eqref{eq:mod2}
are deterministic, with $\psi \in H^1(\partial D)$ and $f \in
H^{-1/2}(D)$. This implies that the solution $p$ to \eqref{eq:mod2} is
in $L^q(\Omega, H^{3/2-\delta})$, for any $\delta > 0$ and $q <
\infty$ (cf. \cite{tsgu13}). In the Metropolis-Hastings
algorithm we will only consider symmetric proposal distributions or
the pCN algorithm.

Since they will become useful later, let us recall some of the main
results in the convergence analysis of (``plain vanilla'')
multilevel Monte Carlo estimators based on independent and identically
distributed (i.i.d.) samples.
An extensive convergence analysis of FE multilevel estimators based on i.i.d. samples 
for model problem \eqref{eq:mod2} with log--normal coefficients can be found in \cite{cst11,tsgu13,teckentrup12}.
We firstly have the following result on the convergence of the FE error in the natural $H^1$--norm.

\begin{theorem}\label{thm:fe_h1} Let $g$ be a Gaussian field with constant mean and covariance function \eqref{eq:cov} with $r=1$, and let $k=\exp[g]$ in model problem \eqref{eq:mod2}. Suppose $D \subset \mathbb R^d$ is Lipschitz polygonal (polyhedral). Then 
\begin{equation*}
\EE_{{\rho_0}}\left[|p - p_\ell|_{H^1(D)}^q \right]^{1/q} \leq C_{k,f,\psi,q} \, (M_\ell^{-1/2d + \delta} + R_\ell^{-1/2+\delta}),
\end{equation*}
for any $q < \infty$ and $\delta > 0$, where the (generic) constant $C_{k,f,\psi,q}$ (here and below) depends on the data $k$, $f$, $\psi$ and on $q$, but is independent of any other parameters.
\end{theorem}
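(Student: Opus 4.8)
The plan is to split the total error into a Karhunen-Lo\`eve (KL-) truncation error and a finite element discretisation error, to bound each \emph{pathwise} (i.e.\ for fixed $\omega$) with explicit dependence on the random quantities $k_{\min}(\omega) := \min_{x \in \overline D} k_{R_\ell}(x,\omega)$, $k_{\max}(\omega) := \max_{x \in \overline D} k_{R_\ell}(x,\omega)$ and suitable solution-regularity seminorms, and only then to take $L^q(\Omega)$-norms. Writing $p_{R_\ell}$ for the \emph{exact} solution of \eqref{eq:mod2} with the truncated coefficient $k_{R_\ell}$ (but no spatial discretisation), the triangle inequality gives
\begin{equation*}
|p - p_\ell|_{H^1(D)} \ \leq \ |p - p_{R_\ell}|_{H^1(D)} + |p_{R_\ell} - p_\ell|_{H^1(D)},
\end{equation*}
so that by Minkowski's inequality it suffices to bound the $L^q(\Omega)$-norm of each term separately.

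For the discretisation term I would fix $\omega$. Since $k_{R_\ell}(\cdot,\omega)$ is continuous on the compact set $\overline D$, the problem is uniformly elliptic pathwise, and C\'ea's lemma yields $|p_{R_\ell} - p_\ell|_{H^1} \lesssim (k_{\max}/k_{\min})\, \inf_{v_\ell}|p_{R_\ell} - v_\ell|_{H^1}$, the infimum being over the piecewise-linear finite element space. Because $\log k \in C^{0,t}(D)$ for every $t < 1/2$, elliptic regularity gives $p_{R_\ell}(\cdot,\omega) \in H^{1+t}(D)$ with a seminorm controlled by a constant depending on $\|k_{R_\ell}\|_{C^{0,t}}$, $k_{\min}^{-1}$ and the data; the standard interpolation estimate then produces $|p_{R_\ell} - p_\ell|_{H^1} \lesssim C(\omega)\, h_\ell^{t}\, |p_{R_\ell}|_{H^{1+t}}$. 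Using $M_\ell \sim h_\ell^{-d}$ and letting $t \uparrow 1/2$ yields the rate $M_\ell^{-1/2d + \delta}$. To pass to the $L^q(\Omega)$-norm I would apply H\"older's inequality to separate the product of random factors ($k_{\max}$, $k_{\min}^{-1}$, H\"older seminorms of $\log k$) and bound each by Fernique's theorem, which guarantees that $\exp[c\,\|\log k\|_{C^{0,t}}]$ --- and hence each such factor --- has finite moments of every order.

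For the truncation term I would use a perturbation (Strang-type) argument: subtracting the weak forms for $k$ and $k_{R_\ell}$ and testing with $p - p_{R_\ell}$ bounds $|p - p_{R_\ell}|_{H^1}$ pathwise by $k_{\min}^{-1}\,\|k - k_{R_\ell}\|_{L^\infty(D)}\,|p|_{H^1(D)}$. The tail decay $\mu_n \lesssim n^{-2}$ of the KL eigenvalues together with the available bounds on the eigenfunctions controls $\|\log k - \log k_{R_\ell}\|$, and hence (via the mean value theorem and Fernique) $\|k - k_{R_\ell}\|_{L^\infty(D)}$, by $R_\ell^{-1/2+\delta}$. Taking $L^q(\Omega)$-norms and again separating the random factors with H\"older and Fernique gives the claimed $R_\ell^{-1/2+\delta}$ bound.

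The main obstacle is precisely the absence of uniform ellipticity: since $k = \exp[\log k]$ is neither bounded above nor below uniformly in $\omega$, every deterministic estimate carries random constants built from $k_{\max}$, $k_{\min}^{-1}$ and H\"older seminorms of $\log k$, and the whole argument hinges on showing that the \emph{product} of all these factors still has finite $L^q(\Omega)$-moments for arbitrarily large $q$. This is delivered by repeated H\"older splitting combined with the exponential integrability from Fernique's theorem; the most delicate ingredient is the pathwise $H^{1+t}$ regularity estimate, whose constant must depend on $\|\log k\|_{C^{0,t}}$ in an explicit enough way to remain integrable after multiplication with the remaining random factors.
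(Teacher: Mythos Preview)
Your proposal is correct and outlines precisely the strategy that underlies the result: the paper itself does not give a proof but simply cites \cite[Proposition~4.1]{tsgu12}, where the argument is exactly the one you sketch --- splitting into a KL-truncation error and a finite element error, deriving pathwise bounds with random constants built from $k_{\min}^{-1}$, $k_{\max}$ and $\|\log k\|_{C^{0,t}}$, and then controlling all moments via Fernique's theorem and H\"older's inequality. So there is no genuine difference in approach; you have reconstructed the proof behind the citation.
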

\begin{proof}
This follows from \cite[Proposition 4.1]{tsgu13}.
\end{proof}

Convergence results for functionals of the solution $p$ can now be
derived from Theorem~\ref{thm:fe_h1} using a duality argument. We will
here for simplicity only consider bounded, linear functionals, but the
results {extend to continuously Fr\`echet differentiable
  functionals} (see \cite[\S 3.2]{tsgu13}). We make the following assumption on the functional $\mathcal G$ (cf. Assumption F1 in \cite{tsgu13}).

\begin{itemize}
\item[{\bf A2.}] Let $\mathcal G : H^1(D) \rightarrow \mathbb R$ be linear, and suppose there exists $C_{\mathcal G} \in \R$, such that
\[
|\mathcal G(v)| {\;\le\;} C_{\mathcal G} \|v\|_{H^{1/2-\delta}}, \qquad \text{for all $\delta > 0$.}
\]
\end{itemize}
An example of a functional which satisfies A2 is a local average of
the pressure, {$\frac{1}{|D^*|} \int_{D^*} p \dx$ for some $D^* \subset D$.}
The main result on the convergence for functionals is the following.

\begin{corollary}\label{lem:fe_func} Let the assumptions of Theorem \ref{thm:fe_h1} be satisfied, and suppose $\mathcal G$ satisfies A2. Then  
\[
\EE_{{\rho_0}}\left[|\mathcal G(p) - \mathcal G(p_\ell)|^q \right]^{1/q} \ \leq  C_{k,f,\psi,q} \, \left(M_\ell^{-1/d + \delta} + R_\ell^{-1/2+\delta}\right),    
\]
for any $q < \infty$ and $\delta > 0$.
\end{corollary}
\begin{proof} This follows from \cite[Corollary 4.1]{tsgu13}.
\end{proof}

Note that assumption A2 is crucial in order to get the faster convergence rates of the spatial discretisation error in Corollary \ref{lem:fe_func}. For multilevel estimators based on i.i.d. samples, it follows immediately from Corollary \ref{lem:fe_func} that the (corresponding) assumptions M1 and M2 are satisfied, with $\alpha=1/d+\delta$, $\alpha'=1/2+\delta$ and $\beta=2\alpha$, $\beta'=2\alpha'$, for any $\delta>0$ (see \cite{tsgu13} for details).

The aim is now to generalise the result in Corollary \ref{lem:fe_func} to
the new MLMCMC estimator. Two issues need to be addressed.
Firstly, the bounds in assumptions M1 and M2 in Theorem \ref{thm:main}
involve moments with respect to the posterior distributions $\nu^\ell$
and $\rho$, which are not known explicitly, but are related to the
prior distributions $\rho_0^\ell$ and $\rho_0$ through Bayes'
Theorem. Secondly, the samples on levels {$\ell$ and $\ell-1$ that
  are used to compute samples of the differences $Y_\ell = Q_\ell -
  Q_{\ell-1}$ are generated by Algorithm~2, and may differ not only
  due to discretisation and truncation order,} but also because they come from different Markov chains (i.e. $\Theta_{\ell-1}^n$ is not necessarily equal to $\theta_{\ell,C}^n$, as seen in Table \ref{tab:outcomes}).

To circumvent the problem of the intractability of the posterior distribution, we have the following lemma, which relates moments with respect to the posterior distribution to moments with respect to the prior distribution.

\begin{lemma}\label{lem:post_bound} For any random variable $Z = Z(\theta_\ell)$ and for any $q$ s.t. $\EE_{\rho_0^\ell}\left[ |Z|^q \right] < \infty$, we have
\begin{align*}
\left| \, \EE_{\nu^\ell}\left[ Z^q \right] \, \right| \, &\lesssim \, \EE_{\rho^\ell_0}\left[ |Z|^q \right].
\end{align*}
Similarly, for any random variable $Z = Z(\vartheta)$ {and for any $q$ s.t. $\EE_{\rho_0}\left[ |Z|^q \right] < \infty$}, we have
\begin{align*}
\left| \, \EE_{\rho^\ell}\left[ Z^q \right] \, \right| \, &\lesssim \, \EE_{\rho_0}\left[ |Z|^q \right].
\end{align*}
\end{lemma}
\begin{proof} Using \eqref{eq:bayes}, we have 
\begin{align*}
\left| \, \EE_{\nu^\ell}\left[ Z^q \right] \, \right| 
&\;\eqsim \; \left|\int_{\mathbb R^{R_\ell}} Z^q(\theta_\ell) \exp\left[ - \Phi^{M,R}(\theta_\ell; F_\mathrm{obs})\right] \, \pi_0^\ell(\theta_\ell) \, \dtheta_\ell \right| \\ \vspace*{2ex}
&\;\lesssim\;  \sup_{\theta_\ell}  \left\{\exp\left[ - \Phi^{M,R}(\theta_\ell; F_\mathrm{obs})\right] \right\}\, \int_{\mathbb R^{R_\ell}} |Z(\theta_\ell)|^q \, \pi_0^\ell(\theta_\ell) \dtheta_\ell. 
\end{align*}
The first claim of the Lemma then follows, since the above supremum can be bounded by 1. The proof of the second claim is analogous, using the Radon-Nikodym derivative \eqref{eq:radnik}.
\end{proof}

We are now ready to prove assumption M1, under the following assumption on the parameters $\sigma_{F,\ell}^2$ in the likelihood model \eqref{eq:like}:
\begin{itemize}
\item[{\bf A3.}] The sequence of fidelity parameters 
$\{\sigma_{F,\ell}^2\}_{\ell=0}^\infty$ satisfies
\[
\sigma_{F}^{-2} - \sigma_{F,\ell}^{-2} \;  \lesssim \; 
\max\left(R_{\ell}^{-1/2+\delta},M_{\ell}^{-1/d+\delta}\right), \quad \text{for all} \ \ \delta>0.
\]
\end{itemize}

\begin{lemma} \label{thm:m1} Let the assumptions of Corollary \ref{lem:fe_func} be satisfied. Suppose $\mathcal F$ satisfies A2, and A3 holds. Then 
\[
|\EE_{\nu^\ell}[Q_\ell] - \EE_{\rho}[Q]| \leq C_{k,f,\psi} \, \left(M_\ell^{-1/d + \delta} + R_\ell^{-1/2+\delta}\right).
\]
\end{lemma} 
\begin{proof}
Since $Q_\ell$ only depends on $\theta_\ell$ we have
$\EE_{\nu^\ell}[Q_\ell] = \EE_{\rho^\ell}[Q_{\ell}]$ and so, using the triangle inequality,
\begin{equation}
\label{eq:bias_triangle}
|\EE_{\nu^\ell}[Q_\ell] - \EE_{\rho}[Q]| \leq |\EE_{\rho^\ell}[Q_\ell]
- \EE_{\rho^\ell}[Q]| + |\EE_{\rho^\ell}[Q] - \EE_{\rho}[Q]|.
\end{equation}
The first term can be bounded using Corollary \ref{lem:fe_func} and
Lemma \ref{lem:post_bound}, i.e.
\[
|\EE_{\rho^\ell}[Q_\ell] - \EE_{\rho^\ell}[Q]| \leq C_{k,f,\psi} \, \left(M_\ell^{-1/d + \delta} + R_\ell^{-1/2+\delta}\right).
\]

For the second term, we will prove a bound on the Hellinger distance $d_\mathrm{Hell}(\rho,\rho^\ell)$. This proof follows closely the proof of \cite[Proposition 10]{hss12}. Denote by $Z$ and $Z_\ell$ the normalising constants of $\rho$ and $\rho^\ell$:
\[
Z =  \int_{\R^\N} \exp\left[ - \frac{1}{2}\Phi(\vartheta;
  F_\mathrm{obs})\right] \mathrm{d}\rho_0(\vartheta) \quad \text{and}
\quad Z_\ell =  \int_{\R^\N} \exp\left[ -
  \frac{1}{2}\Phi^{\ell}(\vartheta; F_\mathrm{obs})\right]
\mathrm{d}\rho_0(\vartheta), \quad \text{respectively}.
\]
Since $\mathcal F$ satisfies Assumption A2, it follows from the results in \cite{stuart10} that both $Z$ and $Z_\ell$ can be bounded away from zero. Next, we have
\[
2\,d_{\mathrm{Hell}}^2(\rho,\rho^\ell) = \int_{\R^\N} \left( Z^{-1/2}  \exp\left[ - \frac{1}{2}\Phi(\vartheta; F_\mathrm{obs})\right] - Z_\ell^{-1/2} \exp\left[ - \frac{1}{2}\Phi^{\ell}(\vartheta; F_\mathrm{obs})\right] \right)^2 \mathrm{d}\rho_0(\vartheta) \leq I + II,
\]
where\vspace{-1.5ex}
\begin{align*}
I &:= \frac{2}{Z} \int_{\R^\N} \left( \exp\left[ - \frac{1}{2}\Phi(\vartheta; F_\mathrm{obs})\right] - \exp\left[ - \frac{1}{2}\Phi^{\ell}(\vartheta; F_\mathrm{obs})\right] \right)^2 \mathrm{d}\rho_0(\vartheta), \\
II &:= 2 \, |Z^{-1/2} - Z_\ell^{-1/2}|^2 \int_{\R^\N} \exp\left[ -\Phi^{\ell}(\vartheta; F_\mathrm{obs})\right] \mathrm{d}\rho_0(\vartheta).
\end{align*}
To estimate I, note that both $\exp\left[ - \frac{1}{2}\Phi(\vartheta;
  F_\mathrm{obs})\right]$ and $\exp\left[ -
  \frac{1}{2}\Phi^{\ell}(\vartheta; F_\mathrm{obs})\right]$ are
bounded above by 1, so that
\[
\exp\left[ - \frac{1}{2}\Phi(\vartheta; F_\mathrm{obs})\right] - \exp\left[ - \frac{1}{2}\Phi^{\ell}(\vartheta; F_\mathrm{obs})\right] \leq |\Phi(\vartheta; F_\mathrm{obs}) - \Phi^{\ell}(\vartheta; F_\mathrm{obs})|.
\]
Denoting $F := \mathcal F(p(\vartheta))$ and $F_{\ell} := \mathcal F (p_\ell(\theta))$, and using the triangle inequality, we have that 
\begin{align*}
\left|\frac{\|F_\mathrm{obs} - F\|^2}{\sigma_{F}^2} - \frac{\|F_\mathrm{obs} - F_{\ell}\|^2}{\sigma_{F,\ell}^2}\right| &\ \le \ \left|{\frac{\Big(\|F_\mathrm{obs} - F_{\ell}\| + \|F - F_{\ell}\|\Big)^2}{\sigma_{F}^2} - \frac{\|F_\mathrm{obs} - F_{\ell}\|^2}{\sigma_{F,\ell}^2}}\right|
\\[1ex]
&\hspace{-3.5cm} = \ \|F_\mathrm{obs} - F_{\ell}\|^2 \left(\sigma_{F}^{-2} - \sigma_{F,\ell}^{-2} \right) \; + \; \frac{2 \|F_\mathrm{obs} - F_{\ell}\| + \|F - F_{\ell}\|}{\sigma_{F}^2}  \|F - F_{\ell}\|.
\end{align*}
Since $\mathcal F$ was assumed to satisfy A2, it follows from Corollary \ref{lem:fe_func} that
\begin{align*}
\EE_{\rho_0}[\|F - F_{\ell}\|^q]^{1/q} 
&\ \le \ C_{k,f,\psi} \left(M_\ell^{-1/d+\delta} \, + \, R_\ell^{-1/2 + \delta} \right).
\end{align*}
Moreover, since $\|F_{\ell}\|$ can be bounded independently of $\ell$ (again courtesy of Assumption~A2), and since 
$\|F_\mathrm{obs} - F_{\ell}\| \le \|F_\mathrm{obs}\| + \|F_{\ell}\|$, 
we can deduce that
\begin{align*}
I \ \lesssim \ \EE_{\rho_0}[ |\Phi(\vartheta; F_\mathrm{obs}) -
\Phi^{\ell}(\vartheta; F_\mathrm{obs})|^2]  \ \le \ C_{k,f,\psi} \left(M_\ell^{-1/d+\delta} \, + \, R_\ell^{-1/2 + \delta} \right)^2.
\end{align*}
using Assumption A3. For the second term II, we note that $ |Z^{-1/2} - Z_\ell^{-1/2}|^2 \lesssim \max\{Z^{-3},Z_\ell^{-3}\} \;  |Z - Z_\ell|^2$,
and an analysis similar to the above shows that
\[
II \ \lesssim \ \EE_{\rho_0}[ |\Phi(\vartheta; F_\mathrm{obs}) - \Phi^{\ell}(\vartheta; F_\mathrm{obs})|]^2 \ \le \ C_{k,f,\psi} \left(M_\ell^{-1/d+\delta} \, + \, R_\ell^{-1/2 + \delta} \right)^2.
\]
The claim of the Theorem then follows, since $
|\EE_{\rho^\ell}[Q] - \EE_{\rho}[Q]| \leq C_{k,f,\psi} d_\mathrm{Hell}(\rho,\rho^\ell)$.
\end{proof}

In order to prove M2, we further have to  analyse the situation where
the two samples $\theta_\ell^{n+1}$ and $\Theta_{\ell-1}^{n+1}$ used to
compute $Y_\ell^{n+1}$ ``diverge'', i.e. when $\Theta_{\ell-1}^{n+1} \neq
  \theta_{\ell,C}^{n+1}$. 

For the remainder we will consider only symmetric or pCN
proposal distributions.

\begin{lemma}\label{lem:varconv} 
Let $\theta_\ell^{n+1}$ and $\Theta_{\ell-1}^{n+1}$ have joint
distribution $\nu^{\ell, \ell-1}$, and set $Y_\ell^{n+1} = Q_\ell(\theta_\ell^{n+1}) - Q_{\ell-1}(\Theta_{\ell-1}^{n+1})$. 
If $q _\mathrm{ML}^{\ell,F}$ is a pCN proposal distribution, then
\begin{equation*}
\VV_{\nu^{\ell, \ell-1}}\left[Y_\ell^{n+1}\right] \leq  C_{k,f,\psi} \, {\left( M_{\ell-1}^{-1/d+\delta} + R_{\ell-1}^{-1/2+\delta}\right)}, \quad \text{for any} \ \ \delta > 0.
\end{equation*}
This bound also holds for a symmetric proposal distribution
$q _\mathrm{ML}^{\ell,F}$ under the additional assumption that
\begin{equation}
\label{assump:R}
(R_\ell - R_{\ell-1}) (2\pi)^{-\frac{R_\ell - R_{\ell-1}}{2}} \; \lesssim \; 
R_{\ell-1}^{-1/2+\delta}, \quad \text{for all} \ \ \delta>0.
\end{equation}
\end{lemma}

For the growth condition \eqref{assump:R} to
be satisfied, it suffices that $R_\ell - R_{\ell-1}$ grows
logarithmically with $R_{\ell-1}.$ To prove Lemma \ref{lem:varconv},
we first need some preliminary results. Firstly, note that
$\Theta_{\ell-1}^{n+1} \neq \theta_{\ell,C}^{n+1}$ only if the
proposal $\theta_\ell'$ generated for $\theta_\ell^{n+1}$ was rejected. 
Given the states $\theta_\ell^{n}$ and $\theta_\ell'$, the probability of
this rejection is given by $1-\alpha^\ell_\mathrm{ML}(\theta_\ell'|\theta_\ell^{n})$. The total probability of a rejection is then $\EE_{\boldsymbol{\zeta}}[(1 - \alpha^\ell_\mathrm{ML}]$, where $\zeta$ denotes the joint distribution of the two variables. We need to quantify this probability. 

Before
  we can do so, we need to specify the (marginal) distribution of the
  proposal $\theta_\ell'$, which we denote by $\zeta_\ell'$. The first $R_{\ell-1}$ entries of $\theta_\ell'$ are
  distributed as $\nu^{\ell-1}$, since they come from
  $\Theta_{\ell-1}$. The remaining $R_\ell - R_{\ell-1}$ dimensions
  are distributed according to the proposal density
  $q _\mathrm{ML}^{\ell,F}(\theta_{\ell,F}' \, | \, \theta_{\ell,F}^n)$
  (independent of the first $R_{\ell-1}$ dimensions). The same proof
  technique as in Lemma \ref{lem:post_bound} shows again that
  $|\EE_{{\zeta_\ell'}}[Z^q]| \lesssim \EE_{\rho_0^\ell}[|Z|^q]$,
  for any random variable $Z = Z(\theta_\ell')$.

\begin{lemma}\label{thm:alphaconv} Let $\theta_\ell^n$ and
  $\theta_\ell'$ be
  as generated by Algorithm 2 at the $(n+1)$th step. Denote their joint distribution by
  $\boldsymbol{\zeta}$, with marginal distributions $\nu^\ell$ and $\zeta_\ell'$, 
  respectively. Suppose $\mathcal F$ satisfies A2, and A3 and the assumptions of Corollary \ref{lem:fe_func} hold.
If $q _\mathrm{ML}^{\ell,F}$ is a pCN proposal distribution, then
\begin{equation*}
\EE_{\boldsymbol{\zeta}}\Big[(1 - \alpha^\ell_\mathrm{ML}(\theta_\ell'|\theta_\ell^{n}))\Big] \leq  C_{k,f,\psi}
{\left(M_{\ell-1}^{-1/d+\delta} + R_{\ell-1}^{-1/2+\delta}  \right)},
\quad \text{for any} \ \ \delta > 0.
\end{equation*}
This bound also holds for
a symmetric proposal distribution $q _\mathrm{ML}^{\ell,F}$ under the additional assumption \eqref{assump:R}.
\end{lemma}
\begin{proof} 
We will start by assuming that $q _\mathrm{ML}^{\ell,F}$ is a pCN proposal distribution. For brevity, denote $\mathcal L_\ell(F_\mathrm{obs}\, |\, \cdot) =: \mathcal L_\ell(\cdot)$.
We will first derive a bound on $1-\alpha^\ell_\mathrm{ML}(\theta_\ell' \, | \, \theta_\ell^n)$, for $\ell > 1$ and for $\theta_\ell'$ and $\theta_\ell^n$ given. First note that if $\frac{\mathcal L_\ell(\theta_\ell') \,  \mathcal L_{\ell-1}(\theta_{\ell,C}^n)}{\mathcal L_\ell(\theta_\ell^n) \,  \mathcal L_{\ell-1}(\theta_{\ell,C}')} \geq 1$, then $1-\alpha^\ell(\theta_\ell' \, | \, \theta_\ell^n) = 0$. Otherwise, we have 
\begin{align}
1-\alpha^{\ell}_\mathrm{ML}(\theta_{\ell}' \, | \, \theta_{\ell}^n)  
&= \left(1-\frac{\mathcal L_\ell(\theta_\ell')}{\mathcal L_{\ell-1}(\theta_{\ell,C}')} \right) + \left( \frac{\mathcal L_\ell(\theta_\ell') \,  \mathcal L_{\ell-1}(\theta_{\ell,C}^n)}{\mathcal L_\ell(\theta_\ell^n) \,  \mathcal L_{\ell-1}(\theta_{\ell,C}')} \right) \left(1-\frac{\mathcal L_\ell(\theta_\ell^n)}{\mathcal L_{\ell-1}(\theta_{\ell,C}^n)}\right) \nonumber \\
&\leq \left| 1-\frac{\mathcal L_\ell(\theta_\ell')}{\mathcal L_{\ell-1}(\theta_{\ell,C}')}\right| + \left| 1-\frac{\mathcal L_\ell(\theta_\ell^n)}{\mathcal L_{\ell-1}(\theta_{\ell,C}^n)}\right|.
\label{eq:alphaproof0}
\end{align}
Let us consider either of these two terms and set $\theta_\ell = (\xi_j)_{j=1}^{R_\ell}$ to be either $\theta_\ell'$ or $\theta_\ell^n$. Using the definition \eqref{eq:like} of the likelihood, we have
\begin{equation}
\label{eq:alphaproof}
\frac{\mathcal L_\ell(\theta_\ell)}{\mathcal L_{\ell-1}(\theta_{\ell,C})} \; = \; \exp \left(- \; \frac{\|F_\mathrm{obs} - F_\ell(\theta_\ell)\|^2}{\sigma_{F,\ell}^2} \;+\; \frac{\|F_\mathrm{obs} - F_{\ell-1}(\theta_{\ell,C})\|^2}{\sigma_{F,\ell-1}^2}\right). 
\end{equation}
Denoting $F_\ell := F(\theta_\ell)$ and $F_{\ell-1} := F(\theta_{\ell,C})$, 
we get as in the proof of Lemma \ref{thm:m1} that 
\begin{align}\label{eq:alphaproof2}
\left|\frac{\|F_\mathrm{obs} - F_\ell\|^2}{\sigma_{F,\ell}^2} -
  \frac{\|F_\mathrm{obs} - F_{\ell-1}\|^2}{\sigma_{F,\ell-1}^2}\right|
& \leq \ \|F_\mathrm{obs} - F_{\ell-1}\|^2 \left|\sigma_{F,\ell}^{-2}
  - \sigma_{F,\ell-1}^{-2} \right| \nonumber \\
&
+ \ \frac{2 \|F_\mathrm{obs} - F_{\ell-1}\| + \|F_\ell - F_{\ell-1}\|}{\sigma_{F,\ell}^2}  \|F_\ell - F_{\ell-1}\|.
\end{align}
Using the inequality $|1-\exp(x)| \leq |x|$, for $0 \leq |x| \leq 1$, it follows immediately from \eqref{eq:alphaproof2}, Assumption A3, Corollary \ref{lem:fe_func}, Lemma \ref{lem:post_bound} and H\"olders inequality that  
\begin{align}
\label{eq:alphaproof1}
\EE_{\boldsymbol{\zeta}} \left[\left| 1-\frac{\mathcal L_\ell(\theta_\ell)}{\mathcal L_{\ell-1}(\theta_{\ell,C})}\right|\right]
\; \le \; C_{k,f,\psi} \left( M_{\ell-1}^{-1/d+\delta} \, + \, R_{\ell-1}^{-1/2+\delta}\right).\vspace{-3ex}
\end{align}
A bound on the expected value of $1-\alpha^{\ell}_\mathrm{ML}(\theta_{\ell}' \, | \, \theta_{\ell}^n)$ now follows from Minkowski's inequality.

The proof in the case of a symmetric proposal distribution is analogous. The bound \eqref{eq:alphaproof0} is replaced by
\[
1-\alpha^{\ell}_\mathrm{ML}(\theta_{\ell}' \, | \, \theta_{\ell}^n)
\leq \left| 1-\frac{\pi^\ell(\theta_\ell')}{\pi^{\ell-1}(\theta_{\ell,C}')}\right| + \left| 1-\frac{\pi^\ell(\theta_\ell^n)}{\pi^{\ell-1}(\theta_{\ell,C}^n )}\right|.
\]
Using the definition of $\pi^\ell$ in \eqref{eq:bayes}, as well as the
models \eqref{eq:prior} and \eqref{eq:like} for the prior and the
likelihood, respectively, we have instead of \eqref{eq:alphaproof} that
\begin{align} \label{eq:alphaproof_sym}
\frac{\pi^\ell(\theta_\ell)}{\pi^{\ell-1}(\theta_{\ell,C})} & \;=\; 
\frac{\pi_0^\ell(\theta_\ell) \qquad \mathcal L_\ell(\theta_\ell)}{\pi_0^{\ell-1}(\theta_{\ell,C}) \, \mathcal L_{\ell-1}(\theta_{\ell,C})} \\
&\; = \; \exp \left(-\,(2\pi)^{-\frac{R_\ell-R_{\ell-1}}{2}} \sum_{j=R_{\ell-1}+1}^{R_\ell} \frac{\xi_j^2}{2} \;- \; \frac{\|F_\mathrm{obs} - F_\ell(\theta_\ell)\|^2}{\sigma_{F,\ell}^2} \;+\; \frac{\|F_\mathrm{obs} - F_{\ell-1}(\theta_{\ell,C})\|^2}{\sigma_{F,\ell-1}^2}\right). \nonumber
\end{align}
Since $\sum_{j=R_{\ell-1}+1}^{R_\ell} \xi_j^2$ is $\chi^2$-distributed with $R_\ell-R_{\ell-1}$ degrees of freedom, we have
\[
\EE_{\rho_0^\ell} \Big[ {\textstyle \sum_{j} \xi_j^2} \Big] = 2 \frac{\Gamma\left(\frac{1}{2}(R_\ell-R_{\ell-1}) + 1\right)}{\Gamma\left(\frac{1}{2}(R_\ell-R_{\ell-1})\right)} \lesssim R_\ell-R_{\ell-1}.
\]
Together with the assumption in \eqref{assump:R} this implies that the
expected value of the additional term in \eqref{eq:alphaproof_sym} is
bounded by $R_{\ell-1}^{-1/2+\delta}$. The proof then reduces to that
in the pCN case above.
\end{proof}

We will further need the following result.

\begin{lemma}\label{lem:thetadiff}
For any $\theta_\ell$, let 
$k_\ell(\theta_\ell) := \exp \left(\sum_{j=1}^{R_\ell} \sqrt{\mu_j}
  \phi_j (\theta_\ell)_j\right)$ and $\kappa(\theta_\ell) := \min_{x \in
  \overline D} k_\ell(\cdot,x)$. 
Then
\begin{equation}
\label{eq:thetadiff1}
|p_\ell(\theta_\ell) - p_\ell(\theta_\ell^*)|_{H^1(D)} \;\lesssim\; 
\frac{\|f\|_{H^{-1}(D)}}{\kappa(\theta_\ell)\kappa(\theta_\ell^*)}\,
\|k_\ell(\theta_\ell) - k_\ell(\theta_\ell^*)\|_{\mathcal C^0(\overline D)}, \quad 
\text{for almost all} \ \ \theta_\ell, \theta_\ell^*,
\end{equation}
and\vspace{-1.5ex}
\begin{equation}
\label{eq:thetadiff2}
\EE_{\rho^\ell_0}\left[|p_\ell(\theta_\ell)|_{H^1(D)}^q\right] \ \le \ \mathrm{constant},
\end{equation}
for any $q < \infty$, where the hidden constants are independent of 
$\ell$ and $p_\ell$.
\end{lemma}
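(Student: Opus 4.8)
The plan is to treat the two estimates separately: first the pathwise bound \eqref{eq:thetadiff1}, then \eqref{eq:thetadiff2} by taking moments of it.

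For \eqref{eq:thetadiff1}, I would start from the two finite element variational problems solved by $p_\ell(\theta_\ell)$ and $p_\ell(\theta_\ell')$. Both use the same right-hand side $f$ and the same Dirichlet data $p_0$, so their difference $e_\ell := p_\ell(\theta_\ell) - p_\ell(\theta_\ell')$ lies in the (homogeneous) finite element subspace of $H^1_0(D)$ and is an admissible test function. Subtracting the two discrete weak forms, and adding and subtracting $\int_D k_\ell(\theta_\ell)\nabla p_\ell(\theta_\ell')\cdot\nabla e_\ell$, yields the identity
\[
\int_D k_\ell(\theta_\ell)\,|\nabla e_\ell|^2 \;=\; -\int_D \big(k_\ell(\theta_\ell)-k_\ell(\theta_\ell')\big)\,\nabla p_\ell(\theta_\ell')\cdot\nabla e_\ell.
\]
Bounding the left-hand side below by coercivity with constant $\kappa(\theta_\ell)$ and the right-hand side above by Cauchy--Schwarz (pulling out $\|k_\ell(\theta_\ell)-k_\ell(\theta_\ell')\|_{\mathcal C^0(\overline D)}$), and dividing through by $|e_\ell|_{H^1}$, gives $|e_\ell|_{H^1}\lesssim \kappa(\theta_\ell)^{-1}\,\|k_\ell(\theta_\ell)-k_\ell(\theta_\ell')\|_{\mathcal C^0(\overline D)}\,|p_\ell(\theta_\ell')|_{H^1}$. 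It then remains to insert the standard a priori energy estimate $|p_\ell(\theta_\ell')|_{H^1}\lesssim \|f\|_{H^{-1}(D)}/\kappa(\theta_\ell')$ (taking $p_0$ homogeneous, or absorbing its contribution into the generic constant), which supplies the second factor $\kappa(\theta_\ell')^{-1}$ and completes \eqref{eq:thetadiff1}.

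With the pathwise bound in hand, \eqref{eq:thetadiff2} follows by raising \eqref{eq:thetadiff1} to the $q$th power, taking the expectation over the independent pair $(\theta_\ell,\theta_\ell')\sim\mathcal P_\ell\times\mathcal P_\ell$, and applying the generalised H\"older inequality to split the three random factors $\kappa(\theta_\ell)^{-1}$, $\kappa(\theta_\ell')^{-1}$ and $\|k_\ell(\theta_\ell)-k_\ell(\theta_\ell')\|_{\mathcal C^0(\overline D)}$ into separate $L^{q_i}(\Omega)$ norms with $\sum_i q_i^{-1}=q^{-1}$. Note that H\"older, rather than independence, is needed here, since the $\mathcal C^0$-difference term depends on both arguments and is correlated with each $\kappa$.

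The crux is therefore to show that each of these three $L^{q_i}(\Omega)$ norms is bounded uniformly in $\ell$. For the inverse minima I would write $\kappa(\theta_\ell)^{-1}=\exp\big(\max_{x\in\overline D}(-\log k_\ell)\big)\le \exp\big(\|\log k_\ell\|_{\mathcal C^0(\overline D)}\big)$ and appeal to Fernique's theorem together with the uniform (in $\ell$) bound on $\EE_{\mathcal P_\ell}[\,\|\log k_\ell\|_{\mathcal C^0(\overline D)}^p\,]$ for the truncated Gaussian field established in \cite{charrier12,tsgu12}; this yields finite inverse moments of $\kappa$ of every order, uniformly in $\ell$. The $\mathcal C^0$-difference term is treated the same way, bounding $\|k_\ell(\theta_\ell)-k_\ell(\theta_\ell')\|_{\mathcal C^0(\overline D)}\le \|k_\ell(\theta_\ell)\|_{\mathcal C^0(\overline D)}+\|k_\ell(\theta_\ell')\|_{\mathcal C^0(\overline D)}$ and invoking the uniform-in-$\ell$ moment bounds on $\|k_\ell\|_{\mathcal C^0(\overline D)}$ from the same theory. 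The main obstacle is precisely this uniformity in $\ell$: one must ensure that the exponential-moment (Fernique) constants do not degenerate as the KL truncation dimension $R_\ell\to\infty$, which is guaranteed by the summability $\sum_n\mu_n<\infty$ and the regularity of the KL eigenfunctions underpinning \cite{charrier12}.
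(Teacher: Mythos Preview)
Your proposal is correct and follows essentially the same route as the paper. Both arguments derive \eqref{eq:thetadiff1} from the variational identity for $e_\ell$ (coercivity with constant $\kappa(\theta_\ell)$, Cauchy--Schwarz, then the a priori bound $|p_\ell(\theta_\ell')|_{H^1}\lesssim \|f\|_{H^{-1}}/\kappa(\theta_\ell')$), and both obtain \eqref{eq:thetadiff2} by H\"older's inequality together with the uniform-in-$\ell$ moment bounds on $\kappa^{-1}$ and $\|k_\ell\|_{\mathcal C^0(\overline D)}$ from \cite{charrier12}; the paper simply cites \cite[Prop.~3.10]{charrier12} for these, whereas you sketch the Fernique argument explicitly, and the paper phrases the splitting of the difference term via Minkowski while you use the pathwise triangle inequality, but these are the same in substance.
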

\begin{proof}
Using the definition of $\kappa(\theta_\ell)$, as well as the identity
$$
\int_D k_\ell(\theta_\ell) \nabla p_\ell(\theta_\ell) \cdot \nabla v \dx = \int_D fv \dx = \int_D k_\ell(\theta_\ell^*) \, \nabla p_\ell(\theta_\ell^*) \cdot \nabla v \dx, \qquad \text{for all} \ v \in H^1_0(D),
$$ 
we have\vspace{-1.5ex} 
\begin{align*}
\quad \kappa(\theta_\ell) |p_\ell(\theta_\ell) - p_\ell(\theta_\ell^*)|^2_{H^1(D)} 
&\leq\;  \int_D k_\ell(\theta_\ell) \, \nabla \left(p_\ell(\theta_\ell) - p_\ell(\theta_\ell^*)\right) \cdot \nabla \left(p_\ell(\theta_\ell) - p_\ell(\theta_\ell^*)\right) \dx\\
&\leq \int_D \left(k_\ell(\theta_\ell) - k_\ell(\theta_\ell^*)\right) \, \nabla p_\ell(\theta_\ell^*) \cdot \nabla \left(p_\ell(\theta_\ell) - p_\ell(\theta_\ell^*)\right) \dx. 
\end{align*}
Due to the standard estimate $|p_\ell(\theta_\ell^*)|_{H^1(D)}
\le \|f\|_{H^{-1}(D)}/\kappa(\theta_\ell^*)$, \eqref{eq:thetadiff1} follows from an application  of the Cauchy-Schwarz inequality, and \eqref{eq:thetadiff2} follows from the fact that $\EE_{\rho^\ell_0}\left[\kappa(\cdot)^{-q}\right]$ is bounded independent
of~$\ell$ (\cite[Prop.~3.10]{charrier12}).
\end{proof}

Using Lemmas \ref{thm:alphaconv} and \ref{lem:thetadiff}, we
are now ready to prove Lemma \ref{lem:varconv}.

\begin{proof}[Proof of Lemma \ref{lem:varconv}]
Let $\theta_\ell^{n+1}$ and $\Theta_{\ell-1}^{n+1}$ be as generated by Algorithm 2 at the $(n+1)$th step, with joint distribution $\nu^{\ell,\ell-1}$. As before, denote the proposal generated for $\theta_{\ell}^n$ by $\theta_\ell'$.
Firstly, since $\theta_{\ell,C}' = \Theta_{\ell-1}^{n+1}$, it follows from Minkowski's inequality that
\begin{align} 
\VV_{\nu^{\ell,\ell-1}}\left[Y_\ell^{n+1} \right] &\ \le \ 
\EE_{\nu^{\ell,\ell-1}}\left[\left(Q_\ell(\theta_\ell^{n+1}) -
    Q_{\ell-1}(\Theta^{n+1}_{\ell-1})\right)^2\right] \nonumber \\
\label{eq:mainlem_triangle}
& \ \lesssim \ \EE_{\widetilde{\boldsymbol{\zeta}}}\left[\left(Q_\ell(\theta_\ell^{n+1}) - Q_{\ell}(\theta_\ell')\right)^2\right] \,+\, \EE_{\zeta_\ell'}\left[\left(Q_\ell(\theta_\ell') - Q_{\ell-1}(\theta_{\ell,C}')\right)^2\right].
\end{align}
Here,
$\widetilde{\boldsymbol{\zeta}}$ denotes the
joint distribution of $\theta_\ell'$ and $\theta_\ell^{n+1}$ and
$\zeta_\ell'$ is the marginal distribution of $\theta_\ell'$.
A bound on the second term follows immediately from Corollary
  \ref{lem:fe_func} and Lemma \ref{lem:post_bound}, i.e.
\begin{equation}\label{eq:b1}
\EE_{\zeta_\ell'}\left[\left(Q_\ell(\theta_\ell') -
    Q_{\ell-1}(\theta_{\ell,C}')\right)^2\right] 
 \,\lesssim \,  \EE_{\rho_0^\ell}\left[\left(Q_\ell(\theta_\ell') - Q_{\ell-1}(\theta_{\ell,C}')\right)^2\right]
\,\leq\, C_{k,f,\psi} \, \left(M_{\ell-1}^{-1/d + \delta} + R_{\ell-1}^{-1+\delta}\right).
\end{equation}
The first term in \eqref{eq:mainlem_triangle} is nonzero only if $\theta_\ell^{n+1} \not= \theta_\ell'$. We will now use Lemmas \ref{thm:alphaconv} and \ref{lem:thetadiff}, as well as the characteristic function $\mathbb I_{\{\theta_\ell^{{n+1}} \neq \theta_\ell' \}} \in \{0,1\}$ to bound it. Firstly,  H\"older's inequality gives
\begin{align}\label{eq:b3}
\EE_{\widetilde{\boldsymbol{\zeta}}}\left[\left(Q_\ell(\theta_\ell^{n+1}) - Q_{\ell}(\theta_\ell')\right)^2\right] &\;= \;\EE_{\widetilde{\boldsymbol{\zeta}}}\left[\left(Q_\ell(\theta_\ell^{n+1}) - Q_{\ell}(\theta_\ell')\right)^2 \, \mathbb I_{\{\theta_\ell^{n+1} \neq \theta_\ell'\}}\right] \nonumber \\
&\;\leq\; \EE_{\widetilde{\boldsymbol{\zeta}}}\left[\left(Q_\ell(\theta_\ell^{n+1}) - Q_{\ell}(\theta_\ell')\right)^{2q_1}\right]^{1/q_1} \, \EE_{\widetilde{\boldsymbol{\zeta}}}\left[\mathbb I_{\{\theta_\ell^{n+1} \neq \theta_\ell'\}}\right]^{1/q_2},
\end{align}
for any $q_1, q_2$ s.t. $q_1^{-1} + q_2^{-1}=1$. Since $\mathcal G$ satisfies assumption A2, it follows from Lemmas \ref{lem:post_bound} and \ref{lem:thetadiff} that the term $\EE_{{\widetilde{\boldsymbol{\zeta}}}}\big[\left(Q_\ell(\theta_\ell^{n+1}) - Q_{\ell}(\theta_\ell')\right)^{2q_1}\big]^{1/q_1}$ in \eqref{eq:b3} can be bounded by a constant independent of $\ell$, for any $q_1 < \infty$:
\[
\EE_{{\widetilde{\boldsymbol{\zeta}}}}\big[\left(Q_\ell(\theta_\ell^n) - Q_{\ell}(\theta_\ell')\right)^{2q_1}\big] \lesssim \EE_{\nu^\ell}\big[(Q_\ell(\theta_\ell^{n+1}))^{2q_1}\big] + \EE_{\zeta_\ell'}\big[(Q_{\ell}(\theta_\ell'))^{2q_1}\big] \lesssim \EE_{\rho_0^\ell}\big[|p_\ell(\theta_\ell)|_{H^1(D)}^{2q_1}\big] \leq \text{constant}.
\] 
Since $\theta_\ell^{n+1} \neq \theta_\ell'$ only if the proposal
$\theta_\ell'$ has been  rejected on level $\ell$ at the $(n+1)$th
step, the probability that this happens can be bounded by
$\EE_{{\boldsymbol{\zeta}}}[1-
  \alpha_\text{ML}^\ell(\theta_\ell'|\theta_\ell^{n})]$, where the joint distribution
  $\boldsymbol{\zeta}$ is as in Lemma \ref{thm:alphaconv}. It follows by Lemma \ref{thm:alphaconv} that
\begin{equation}\label{eq:b4}
\EE_{{\widetilde{\boldsymbol{\zeta}}}}\left[\mathbb
  I_{\{\theta_\ell^{{n+1}} \neq \theta_\ell'\}}\right] \;=\;
\PP[\theta_\ell^{{n+1}} \neq \theta_\ell'] 
\; \leq\; C_{k,f,\psi} \, \left(M_{\ell-1}^{-1/d + \delta} + R_{\ell-1}^{-1+\delta}\right).
\end{equation}
Combining \eqref{eq:mainlem_triangle}-\eqref{eq:b4} 
the claim of the Lemma then follows.
\end{proof}

We now collect the results in the preceding lemmas to state our main result of this section.

\begin{theorem}\label{thm:rates} 
Under the same assumptions as in Lemma \ref{lem:varconv}, Assumptions M1 and M2 of Theorem~\ref{thm:main} are satisfied, with $\alpha=\beta=1/d-\delta$ and $\alpha'=\beta'=1/2-\delta$, for any $\delta>0$.
\end{theorem}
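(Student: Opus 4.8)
The plan is to simply collect the results established in the preceding lemmas and match their decay rates against the exponents claimed in M1 and M2; no new analysis is required. Throughout, recall that for the model problem $Q = \mathcal G(p)$ and $Q_\ell = \mathcal G(p_\ell)$, where $p$ solves \eqref{eq:mod2} and $p_\ell$ is its level-$\ell$ finite element approximation with the KL-expansion truncated after $R_\ell$ terms. Note also that the hypotheses listed in Lemma \ref{lem:varconv} contain exactly the assumptions of Theorem \ref{thm:fe_h1} together with A2 for $\mathcal G$, so that Lemmas \ref{lem:fe_func} and \ref{lem:post_bound} are both directly applicable.

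To verify M1, I would apply Lemma \ref{lem:post_bound} with $Z = Q_\ell - Q$ and $q = 1$ to bound the posterior first moment by the prior first moment,
\[
\bigl| \EE_{\pi^\ell}[Q_\ell - Q] \bigr| \;\lesssim\; \EE_{\mathcal P_\ell}\bigl[ |Q_\ell - Q| \bigr],
\]
and then invoke Lemma \ref{lem:fe_func} (with $q=1$) to obtain
\[
\EE_{\mathcal P_\ell}\bigl[ |\mathcal G(p) - \mathcal G(p_\ell)| \bigr] \;\leq\; C_{k,f,p_0} \bigl( M_\ell^{-1/d+\delta} + R_\ell^{-1/2+\delta}\bigr).
\]
Writing $M_\ell^{-1/d+\delta} = M_\ell^{-(1/d-\delta)}$ and $R_\ell^{-1/2+\delta} = R_\ell^{-(1/2-\delta)}$, this is precisely the bound required in M1 with $\alpha = 1/d - \delta$ and $\alpha' = 1/2 - \delta$, for any $\delta > 0$.

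For M2 there is essentially nothing left to do: the required estimate
\[
\VV_{\pi^\ell, \pi^{\ell-1}}[Y_\ell] \;\lesssim\; M_{\ell-1}^{-1/d+\delta} + R_{\ell-1}^{-1/2+\delta} \;=\; M_{\ell-1}^{-(1/d-\delta)} + R_{\ell-1}^{-(1/2-\delta)}
\]
is exactly the conclusion of Lemma \ref{lem:varconv}, so M2 holds with $\beta = 1/d - \delta$ and $\beta' = 1/2 - \delta$. The only point worth recording is that the exponent identification is consistent across the two assumptions, which yields $\alpha = \beta$ and $\alpha' = \beta'$ exactly as stated, and that the admissibility condition $\alpha \geq \tfrac{1}{2}\min(\beta,\gamma)$ in Theorem \ref{thm:main} is satisfied (since $\alpha=\beta$).

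The crucial work has of course already been carried out upstream, inside Lemma \ref{lem:varconv} and the two estimates it relies on, namely Theorem \ref{thm:alphaconv} on the decay of $1-\alpha^\ell$ and Lemma \ref{lem:thetadiff} on the Lipschitz dependence of $p_\ell$ on $\theta_\ell$. Consequently I anticipate no genuine obstacle in this final step; it is a bookkeeping exercise of unwinding the $+\delta$ versus $-\delta$ notation and confirming that the hypotheses of Lemmas \ref{lem:fe_func} and \ref{lem:post_bound} are subsumed by those assumed in Lemma \ref{lem:varconv}.
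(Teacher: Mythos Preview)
Your proposal is correct and follows exactly the approach indicated in the paper: the text immediately preceding Lemma~\ref{lem:varconv} already records that M1 follows from Lemmas~\ref{lem:fe_func} and~\ref{lem:post_bound}, and Lemma~\ref{lem:varconv} is precisely M2, so the theorem is stated without a separate proof. Your write-up simply makes this explicit, and the bookkeeping of the exponents is handled correctly.
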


If we assume that we can obtain individual samples in optimal cost 
$\mathcal{C}_\ell \lesssim M_\ell \log (M_{\ell})$, e.g. via a
multigrid solver, we can satisfy Assumption M4 with $\gamma=1+\delta$,
for any $\delta > 0$. It follows from Theorems \ref{thm:main} and 
\ref{thm:rates} that we can get the following theoretical upper bounds
for the $\eps$-costs of classical and multilevel MCMC applied to model
problem \eqref{eq:mod2} with log-normal coefficients $k$:
\begin{equation}
\mathcal{C}_\eps(\widehat{Q}^\mathrm{MC}_{N}) \;\lesssim\; \varepsilon^{-(d+2)-\delta} \quad \text{and} \quad \mathcal{C}_\eps(\widehat{Q}^\mathrm{ML}_{L, \{N_\ell\}})  \;\lesssim\; \varepsilon^{-(d+1)-\delta}, \quad \text{for any} \ \delta > 0.
\end{equation}

We clearly see the advantages of the multilevel method, which gives a saving of one power of~$\eps^{-1}$ compared to the standard MCMC method. Note that for multilevel estimators based on i.i.d samples, the savings of the multilevel method over the standard method are two powers of $\eps^{-1}$, for $d=2,3$. The larger savings stem from the fact that $\beta=2\alpha$ in this case, compared to $\beta=\alpha$ in the MCMC analysis above. 
The numerical results in the next section for $d=2$ show that in
practice we do seem to observe $\beta \approx 1 \approx 2\alpha$,
leading to $\mathcal{C}_\eps(\widehat{Q}^\mathrm{ML}_{L, \{N_\ell\}})
= \mathcal{O}(\varepsilon^{-2})$. However, we do not believe that this
is a lack of sharpness in our theory, but rather a pre-asymptotic
phase. The constant in front of the leading order term in the bound of
$\VV_{\nu^{\ell,\ell-1}}[Y_\ell^n]$, namely the term
$\EE_{{\boldsymbol{\tilde \zeta}}}\big[\left(Q_\ell(\theta_\ell^{n+1})
  - Q_{\ell}(\theta_\ell')\right)^{2q_1}\big]^{1/q_1}$ in
\eqref{eq:b3}, depends on the difference between
$Q_\ell(\theta_\ell^{n+1})$ and $Q_{\ell}(\theta_\ell')$. In the case
of the pCN algorithm for the proposal distributions $q^{\ell-1}$ and
$q _\mathrm{ML}^{\ell,F}$ (as used in Section \ref{sec:num} below)
this difference will be small, since $\theta_\ell^n$ and
$\theta_\ell'$ will in general be very close to each other. However,
the difference is bounded from below and so we should eventually 
see the slower convergence rate for the variance as predicted by our theory.

\section{Numerics}
\label{sec:num}

In this section we describe the implementation details of the MLMCMC
algorithm and examine the performance of the method in estimating the
posterior expectation of some quantity of interest for our model problem
\eqref{eq:mod2}. 
We consider \eqref{eq:mod2} on the domain $D = (0, 1)^2$ with $f
\equiv 1$. On the lateral boundaries of the domain we choose Dirichlet
boundary conditions; on the top and bottom we choose Neumann conditions:
\begin{equation}
p|_{x_1 = 0} = 0, \quad p|_{x_1 = 1} = 1, \quad \frac{\partial p}{\partial \textbf{n}}\Big|_{x_2 = 0} = 0 \quad \mbox{and} \quad \frac{\partial p}{\partial \textbf{n}}\Big|_{x_2 = 1} = 0.
\end{equation}
The quantity of interest is the flux across the boundary at $x_1 = 1$, given by
\begin{equation}
\label{def:functional}
Q := -\int_0^1 k \frac{\partial p}{\partial x}\Big|_{x_1 = 1}\;dx_2.
\end{equation}
The (prior) permeability field $k$ is modelled as a log-normal random
field, with covariance function \eqref{eq:cov} with $r = 1$, $\sigma^2
= 1$ and $\lambda = 0.5$. The log-normal distribution is approximated
using truncated KL-expansion \eqref{truncated_kl} with an increasing 
number  $R_\ell$ of terms as $\ell$ increases. For $r=1$, the KL eigenfunctions
in \eqref{truncated_kl} are known explicitly \cite{cgst11}.

The model problem is discretised using piecewise linear FEs on a
uniform triangular mesh. The coarsest mesh consists of $m_0+1$ grid
points in each direction, with refined meshes containing $m_\ell +1 =
2^\ell m_0 + 1$ points, so that the total number of grid points on level $\ell$
is $M_\ell = (m_\ell+1)^2$.  All our algorithms have been implemented
within \texttt{freeFEM++} \cite{freeFEM}. As the linear solver for the
resulting linear equation system for each sample we used
\texttt{UMFPACK} \cite{Davis:04}.

\subsection{Implementation Details}

Let us first define two important quantities for the convergence
analysis of Metropolis-Hastings MCMC.\vspace{2ex}

\noindent{\em Effective sample size and integrated autocorrelation time.} 
Let $\{\theta^n\}_{n\ge 0}$ be the Markov chain
produced by Algorithm~1 and $\widehat{Q}^{\text{MC}}_N$ the 
resulting MCMC estimator defined in
\eqref{eq:mcmcest}. The integrated autocorrelation time $\tau_Q$ of the
correlated samples $Q^{n}_{M,R} := \mathcal{G}(X(\theta^n))$ produced
by Algorithm~1 is defined to be the ratio of the
asymptotic variance $\sigma_Q^2$ of the MCMC estimator 
$\widehat{Q}^{\text{MC}}_N$, defined in \eqref{def:assym_var}, and the 
actual variance $\VV_{\nu^{M,R}}[Q_{M,R}]$ of $Q_{M,R}$. If
\[
s_Q^2 := \frac{1}{N} \sum_{j=0}^N \left(
    Q_{M,R}^n - \widehat{Q}^{\text{MC}}_N \right)^2 
\]
denotes the sample variance, then a good estimate for $\tau_Q$, used
e.g. in \texttt{R},  is given by $\tau_Q = s_Q^2 / \rho(0)$,
where $\rho(0)$ is the so-called spectral density at
frequency zero. Details of a method for approximating
the  spectral density are given in \cite{HeiWel81}
(included  in \texttt{R} under the package \texttt{`coda'}). The
effective sample size is defined as $N^{\rm{eff}} :=
N/\tau_Q$. It represents the number of i.i.d. samples from $\nu_{M,R}$
that would lead to a Monte Carlo estimator with the same variance as 
$\widehat{Q}^{\text{MC}}_N$.\vspace{2ex}

\noindent
{\em Recursive independence sampling.} The final ingredient for our hierarchical multilevel MCMC algorithm 
is an efficient practical algorithm to obtain independent samples $\Theta_{\ell-1}^n$
from the coarse posterior $\nu^{\ell-1}$
which we need in Algorithm 2  in Section~3 to estimate
$\mathbb{E}_{\nu^\ell}[Q_\ell] - \mathbb{E}_{\nu^{\ell-1}}[Q_{\ell-1}]$. The algorithm is
summarised in Algorithm 3. 
\begin{figure}[t]
\begin{framed}
\noindent
{\bf ALGORITHM 3. (Recursive independence sampling)} \vspace{1ex} \\ 
Choose initial states $\Theta_{\ell-1}^0 = \widetilde{\Theta}_{\ell-1}^0, \ldots,
\widetilde{\Theta}_{0}^0$ such that
$\widetilde{\Theta}_{k,C}^0=\widetilde{\Theta}_{k-1}^0$ and
subsampling rates $t_k$, for all
$k=1,\ldots,\ell-1$. 
Then, for $j \geq 0$:
\begin{itemize} 
\item On level $0$: 
\begin{itemize}
\item Given $\widetilde{\Theta}_{0}^j$, generate $\widetilde{\Theta}_{0}'$ from a pCN proposal distribution.
\vspace{3mm}

\item Compute\vspace{-1.5ex} 
\begin{equation*}
\alpha^{0}(\widetilde{\Theta}_{0}' | \widetilde{\Theta}_{0}^j) = \min
\left\{ 1, \frac{\mathcal L_{0}(F_\mathrm{obs}\, |\, \widetilde
    \Theta_{0}')}{\mathcal L_{0}(F_\mathrm{obs}\, |\, \widetilde
    \Theta_{0}^j)} \right\}.  \hspace{25mm}\vspace{-0.5ex}
\end{equation*}

\item Set \;$\widetilde \Theta_{0}^{j+1}  = \widetilde \Theta_{0}'$\;
  with probability \;$\alpha^{0}(\widetilde \Theta_{0}'  | \widetilde
  \Theta_{0}^j)$. Set \;$\widetilde \Theta_{0}^{j+1}  = \widetilde
  \Theta_{0}^j$\; otherwise.
\end{itemize}
\item On level $k=1,\ldots,\ell-1$:
\begin{itemize}
\item Given $\widetilde \Theta_{k}^j$, let $\widetilde \Theta_{k,C}' =
  \widetilde \Theta_{k-1}^{(j+1)t_{k-1}}$ and generate $\widetilde \Theta_{k,F}'$ from a pCN proposal distribution.
\hspace*{-2mm}\vspace{3mm}
\item Compute \vspace{-1.5ex}
\begin{equation*}
\alpha^{\ell}_\mathrm{ML}(\widetilde \Theta_{k}' \, | \, \widetilde \Theta_{k}^j) \;=\; 
\min \left\{ 1, \frac{\mathcal L_k(F_\mathrm{obs}\, |\, \widetilde
    \Theta_k') \, \mathcal L_{k-1}(F_\mathrm{obs}\, |\, \widetilde
    \Theta_{k,C}^j)}{\mathcal L_k(F_\mathrm{obs}\, |\, \widetilde
    \Theta_k^j) \, \mathcal L_{k-1}(F_\mathrm{obs}\, |\, \widetilde \Theta_{k,C}')} \right\}. \hspace{25mm}
\end{equation*}\vspace{-1.5ex}

\item  Set \;$\widetilde \Theta_{k}^{j+1}  = \widetilde \Theta_{k}'$\;
  with probability \;$\alpha^{k}_{\text{ML}}(\widetilde \Theta_{k}'  | \widetilde
  \Theta_{k}^j)$. Set \;$\widetilde \Theta_{k}^{j+1}  = \widetilde
  \Theta_{k}^j$\; otherwise.
\vspace{3mm}

\end{itemize} 
\item Set \;$\Theta_{\ell-1}^{j+1} = \widetilde{\Theta}_{\ell-1}^{(j+1)t_{\ell-1}}$. 
\end{itemize}
\end{framed}
\end{figure}

We start on level 0 by creating a sufficiently long Markov chain 
$\{\widetilde{\Theta}_{0}^j\}_{j\ge0}$ using 
Algorithm 1 with pCN proposal distribution $q^0$~\cite{cds12} (see
\eqref{pCN_level0} below for details). 
Let $\widetilde Q_0^j := \mathcal{G}(p_0(\widetilde{\Theta}_{0}^j))$ be the
sample of the output quantity of interest associated with the $j$th
sample of the auxiliary chain  $\{\widetilde \Theta_0^j\}_{j\ge0}$ on
level~0. The
samples in this chain are correlated, but by subsampling it with a
sufficiently large rate $t_0 \in \mathbb{N}$, we obtain independent 
samples. The typical rule in statistics to achieve 
independence is to choose $t_0$ to be twice the integrated
autocorrelation time $\widetilde{\tau}_0$ of the Markov chain $\{\widetilde Q_0^j\}_{j\ge
  0}$. In practice, we found that much shorter subsampling rates 
were sufficient (see below). 

Then, on level $0 < k \le \ell-1$, we use the
independent samples 
created on level $k-1$ in Algorithm~2, to recursively create  a Markov chain 
$\{\widetilde{\Theta}_{k}^j\}_{j\ge0}$ on level $k$. The
proposal distribution $q^{k,F}$ for the modes that are added on level
$k$ is again chosen to be a pCN random walk  (see
\eqref{pCN_finemodes} below for details). We subsample this chain
again with sufficiently large rate $t_k \in \mathbb{N}$ to obtain
independent samples on level~$k$. Finally, we set $\Theta_{\ell-1}^n :=
\widetilde{\Theta}_{\ell-1}^{nt_{\ell-1}}$. In summary, to produce one
independent sample $\Theta_{\ell-1}^n$ on level $\ell-1$, we need to 
compute $T_k := \prod_{k'=k}^{\ell-1} t_{k'}$ samples, on each of levels 
$k=0,\ldots,\ell-1$. Since the acceptance probability
$\alpha^k_{\text{ML}}(\widetilde{\Theta}_{k}'|\widetilde{\Theta}_{k}^j)$
converges to $1$, as $k$ increases (cf.~Lemma \ref{thm:alphaconv}), and since we 
are using independent proposals from level $k-1$, the integrated 
autocorrelation times $\widetilde{\tau}_k$ of the auxiliary chains 
$\{\widetilde{\Theta}_{k}^j\}_{j\ge0}$, $k=1,\ldots,\ell-1$, converge
to $1$, i.e. the samples are essentially independent for large~$k$. As
a consequence $T_k$ is actually of the same order as the autocorrelation time 
of samples that Algorithm~1 with pCN proposals would 
produce on level $k$ (see below for more details). 

At the $j$th state of the auxiliary chain on level 0, the pCN proposal
from the standard multivariate normal prior distribution is generated as follows:
\begin{equation}
\label{pCN_level0}
(\widetilde \Theta'_{0})_i = \sqrt{1 - \beta_0^2} \, (\widetilde
\Theta^j_{0})_i + \beta_0\, \Psi_i\,, \quad i=1,\ldots,R_0\,.
\end{equation}
Here, $\Psi_i \sim \mathcal N(0,1)$ and $\beta_0$ is a tuning
parameter used to control the size of the step in the pCN random walk
\cite{cds12}. Similarly, the proposal $\widetilde \Theta'_{k,F}$ for
the fine modes at the $j$th state of the auxiliary chain on level
$k \in \{1,\ldots,L\}$ is generated by
\begin{equation}
\label{pCN_finemodes} 
(\widetilde \Theta'_{k,F})_i = \sqrt{1 - \beta_k^2} \, (\widetilde
\Theta^j_{k,F})_i + \beta_k \, \Psi_i\,, \quad i=1,\ldots,R_k - R_{k-1}\,.
\end{equation}
The actual values of $\beta_k = 0.1$, for all $k=0,\ldots,L$, that are
used in all the calculations that follow were chosen after carrying out a series of
preliminary tests to achieve ``good'' mixing properties. 

As in \eqref{eq:mcmcest}, in practice, the first $j_k^{0}$ samples from each
of the auxiliary chains are discarded by prescribing a ``burn-in''
period. We choose the length $j_k^{0}$ of the ``burn-in'' period on
level $k$ to be twice the integrated autocorrelation time
$\widetilde{\tau}_k$.\vspace{2ex}

\noindent
{\em Multilevel estimator.} We can now use the independent samples
$\Theta_{\ell-1}^n \sim \nu^{\ell-1}$ produced by Algorithm~3 above in
Algorithm~2 to produce samples $\theta_{\ell}^n$ of the fine chain on level
$\ell$, and thus samples $Y_\ell^n := \mathcal{G}(p_\ell(\theta_\ell^n)) - 
\mathcal{G}(p_{\ell-1}(\Theta_{\ell-1}^n))$ for the estimator $\widehat Y_{\ell,
  N_\ell}^{\mathrm{MC}}$ of $\EE_{\nu^\ell} [Q_\ell] -
\EE_{\nu^{\ell-1}}[ Q_{\ell-1}]$ in \eqref{eq:levelest}. The samples
for the estimator $\widehat Q_{0,N_0}^{\mathrm{MC}}$ on level $0$ are
produced with Algorithm~1 using again pCN-proposals. This completes the
definition of the multilevel MCMC estimator $\widehat
Q^\mathrm{ML}_{L,\{N_\ell\}}$ in \eqref{eq:mlmcmcest}.
It only remains to decide on an optimal sample size $N_\ell$ on each level that
will ensure that the total sampling error is below the prescribed
tolerance and that the total cost of the estimator is minimised.

Let $\tau_\ell$ be the integrated autocorrelation time of the chain
$Y_\ell^n$ (resp.~$Q_0^n$), for $\ell=1,\ldots,L$
(resp. $\ell=0$), and let $s_\ell^2$ be the sample variance on level
$\ell$. Then $N^{\rm{eff}}_{\ell} :=  N_{\ell} /
\tau_\ell$ is the effective sample size on level $\ell$ and
$s_\ell^2/N^{\rm{eff}}_{\ell}$ is an estimate of the variance
of the estimator $\widehat Y_{\ell,
  N_\ell}^{\mathrm{MC}}$. Our aim
is to achieve the following bound on the 
total sampling error for the multilevel MCMC estimator:
\begin{equation}
\sum^{L}_{\ell=0}\frac{s_\ell^2}{N_{\ell}^{\rm{eff}}} \leq \frac{\varepsilon^2}{2},
\label{eqn:samplingConstraint}
\end{equation}
for some prescribed tolerance $\varepsilon$. In what follows, we
will choose $\varepsilon$ such that the bias error on level $L$ is
$\frac{\varepsilon^2}{2}$ and thus the two contributions to the mean
square error in \eqref{eq:mse_multi} are balanced.

To decide on a cost-optimal strategy for the choice of the $N_\ell$,
we first need to discuss the cost per sample.  Recall that 
$\mathcal{C}_\ell$ denotes the cost to evaluate $Q_\ell$ for a single 
sample $\Theta_\ell$ from the prior on level~$\ell$. 
However, to quantify the cost of the estimator $\widehat Y_{\ell,
  N_\ell}^{\mathrm{MC}}$ on level $\ell$, we also need to take all the 
samples in the auxiliary chains 
on the coarser levels in Algorithm~3 into
account, as well as the integrated
autocorrelation time $\tau_\ell$ of the chain $\{Y_\ell^n\}$. Recalling that $t_k$
is the subsampling rate on level $k$ in Algorithm~3 and that $T_k = \prod_{k'=k}^{\ell-1}
t_{k'}$, the total cost to produce one independent (effective) sample is
\begin{equation}
\label{eq:effectivecost}
\mathcal{C}^{\text{eff}}_\ell := \lceil\tau_\ell\rceil
\left(\mathcal{C}_\ell + \sum_{k=1}^{\ell-1} T_k \, \mathcal{C}_k
\right)\,.
\end{equation}

As in the case of standard multilevel MC with i.i.d. samples, the
total cost of the multilevel estimator is minimised, subject to the
constraint~\eqref{eqn:samplingConstraint}, when the effective 
number of samples on each level satisfies
\begin{equation}
N^{\text{eff}}_\ell = \frac{2}{\varepsilon^2}\left(\sum_{\ell=0}^{L}\sqrt{s_\ell^2\mathcal C^{\text{eff}}_\ell}\right)\sqrt{\frac{s_\ell^2}{\mathcal C^{\text{eff}}_\ell}}
\label{eqn:optimumNl}
\end{equation}
as described in \cite{giles08,cgst11}. In practice, the optimal number
of samples can be estimated adaptively after an initial number of
samples to get an estimate for $s_\ell^2$  
(see again \cite{giles08,cgst11} for standard MLMC).

In all calculations which follow we simultaneously run $P$ parallel chains. This allows for an efficient parallelisation and aids exploration of multi-modal posterior distributions. Furthermore the calculation of the
total sampling error \eqref{eqn:samplingConstraint} is simplified. The
parallel chains provide $P$ independent estimates for $\widehat
Y_{\ell}^{\mathrm{MC}}$. Therefore, using standard statistical tools,
the sampling error on each level can be calculated without the
need for accurate estimates of the integrated autocorrelation times. 
For the implementation considered
here we chose $P = 128$ and distributed the computations across 128 processors.

\subsection{Two-Level Results}\label{sec:twoleveltests}

We start with a two level test to investigate the additional bias created in Algorithm 2 due to the dependence of 
the coarse samples from the recursive subsampling procedure in
Algorithm 3 and how that bias depends on the subsampling rate $t_k$. We choose two grids with $m_0 =
8$ and $m_1 = 16$ and fix the numbers of KL modes to be $R_0 = R_1 =
20$. The data is generated synthetically from a single random sample
from the prior distribution computed on grid level 4, i.e. with $m_4 =
128$. The observations $F_{obs}$ are taken to be the
pressure values at 16 uniformly spaced points interior to the
domain. The data fidelity is set to $\sigma_F^2 = 10^{-4}$ on both
levels. 

\begin{figure}
\includegraphics[width=0.48\linewidth]{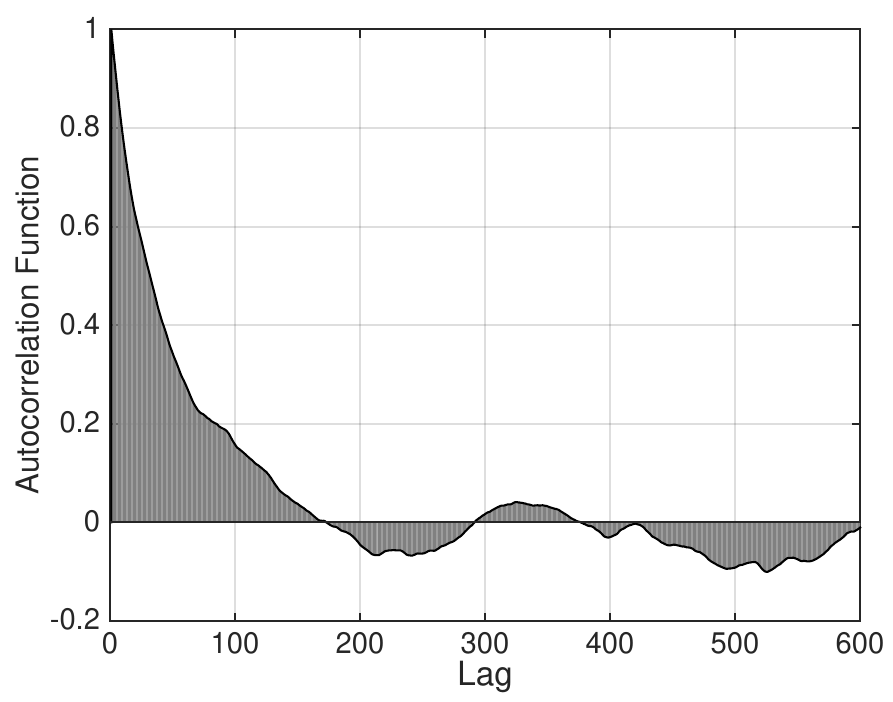} \hfill\includegraphics[width=0.44\linewidth]{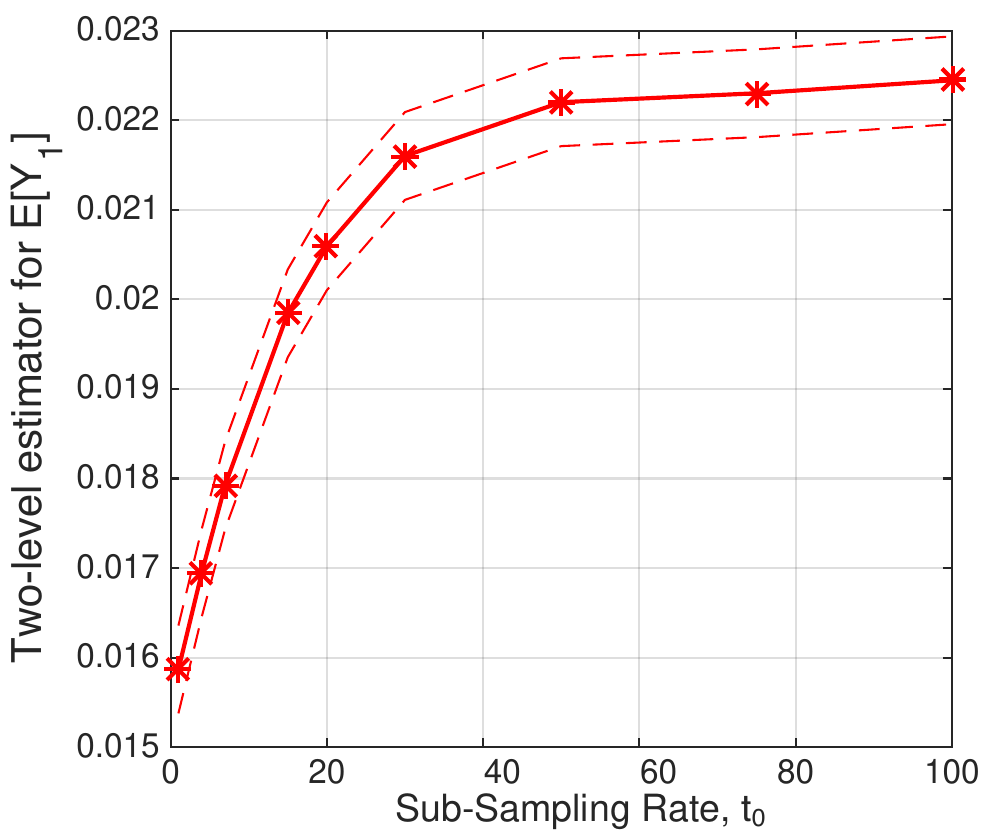}
\caption{Left: Autocorrelation function for a typical
  (burnt-in) coarse level chain $\{Q_0^{n}\}$ with an integrated
  autocorrelation time of $\tau_0 \approx 86$. Right: Plot of $\mathbb
  E\big[\widehat Y^{\text{MC}}_{1}\big]$ against subsampling rate $t_0$; the solid line shows
  the computed results whilst the dashed lines give the two-sided 95$\%$ confidence interval.}
\label{fig:biasError}
\end{figure}
We first computed the autocorrelation function for a typical (burnt-in)
coarse level chain $\{Q_0^n\}$ (see Fig. \ref{fig:biasError}(left)) and note that
the integrated autocorrelation time is approximately $\tau_0 \approx
86$ in this case. We then ran Algorithms 2 and 3, for different
subsampling rates from $t_0 = 1$ to $100 >
\tau_0$, until the standard
error for the estimator $\widehat{Y}^{\text{MC}}_{1}$ reached a
prescribed tolerance of $\varepsilon = 2.5 \times 10^{-4}$. Fig.~\ref{fig:biasError}(right) shows the expected value of
$\mathbb E_{\boldsymbol{\Theta}_1}[\widehat{Y}^{\text{MC}}_{1}]$ as a function of $t_0$, as
well as the two-sided $95\%$ confidence interval, i.e. $\mathbb
E_{\boldsymbol{\Theta}_1}[\widehat{Y}^{\text{MC}}_1] \pm
1.96\,\varepsilon$. We note that  $\mathbb
E_{\nu^1}[Q_1]  - \mathbb
E_{\nu^0}[Q_0] \approx \mathbb
E_{\{\Theta^n_1\}}[\widehat Q^{\text{MC}}_1] - \mathbb
E_{\{\Theta^n_0\}}[\widehat Q^{\text{MC}}_0] \approx 0.0222$, calculated
from two independent standard MCMC runs to a tolerance of $\varepsilon =
2.5 \times 10^{-5}$ on each level. 

We note that, for the example considered here, the additional bias
error due to the dependence of the samples is less than $30\%$ 
even if no subsampling is used (i.e. $t_0 = 1$). In practice, a value of $t_0=50$ would 
be sufficient to reduce the bias to a negligible amount ($< 1\%$), given all
the other bias errors due to FE discretisation, KL truncation and
Metropolis-Hastings sampling. However, to be on the safe side for all
the calculations that follow we take the subsampling rate equal to the
smallest integer that is bigger than our estimate of the integrated 
autocorrelation time, i.e. $t_\ell = \lceil\widetilde{\tau}_\ell\rceil$.

\subsection{Comparison of MLMCMC with a standard
  single-level MCMC estimator}\label{sec:MLMCMCtests}

We now test the full MLMCMC Algorithm, using the same coarsest grid with
$m_0 = 8$ and considering up to five levels in our method with a uniformly
increasing number of KL modes across the levels from $R_0 = 50$ to
$R_4 = 150$. As for the two level example, the data is generated
synthetically from a single random sample from the prior distribution
on level 4, see Fig.~\ref{fig:data_posteriorsample}(left). We note that since $R_4 =150$
here, the data differs slightly from that used in the two-level 
results in Sect.~\ref{sec:twoleveltests} (although we used the
same random numbers for the first 20 KL modes). The fidelity parameter 
was again chosen to be $\sigma_{F,\ell}^2 = 10^{-4}$, for all
$\ell=0,\ldots,4$. A typical sample from the posterior distribution on
grid level 4, produced by our multilevel algorithm, is shown in 
Fig.~\ref{fig:data_posteriorsample}(right).
\begin{figure}
 \includegraphics[width = 0.49\linewidth]{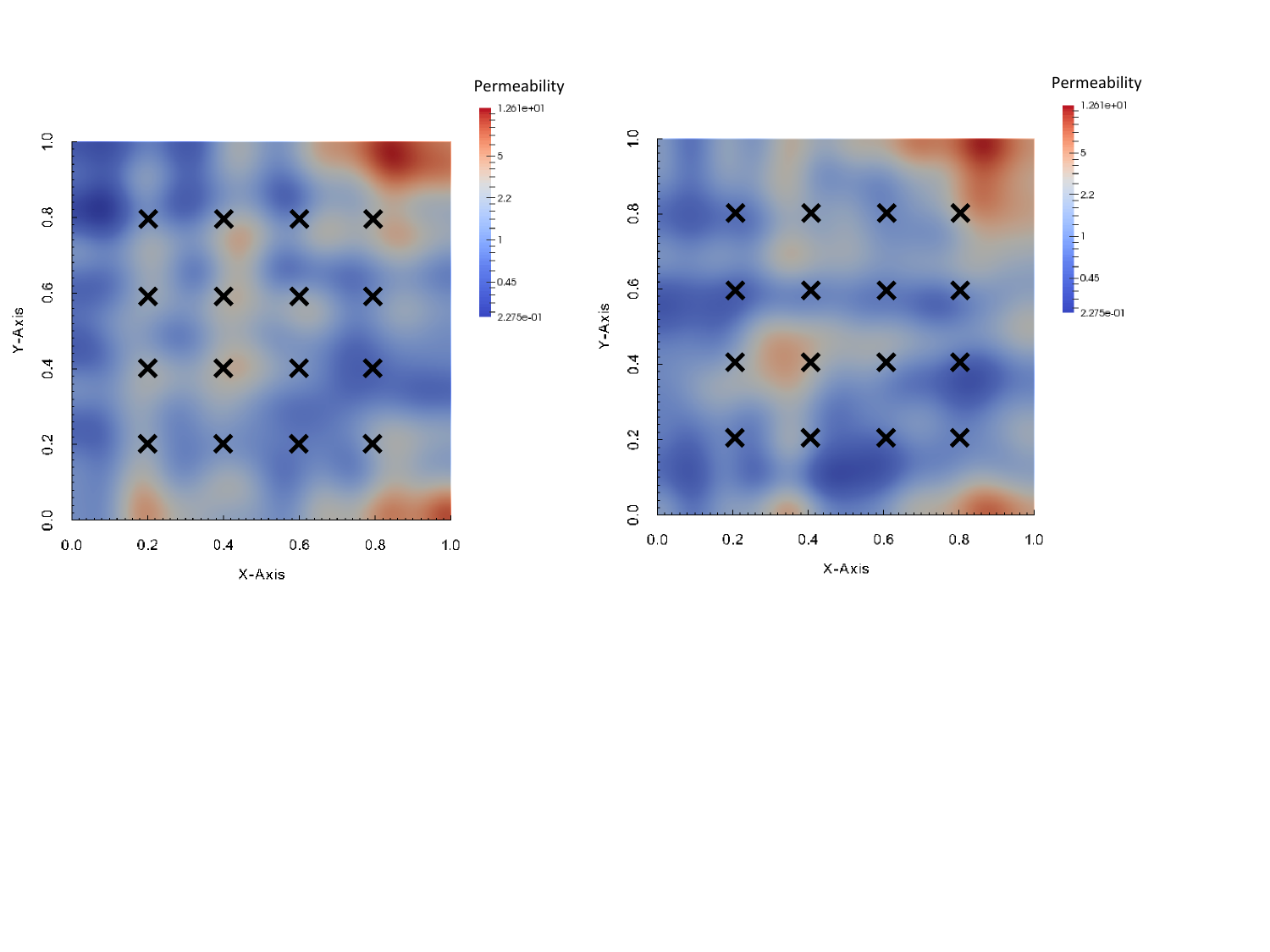} \hfill
\includegraphics[width = 0.49\linewidth]{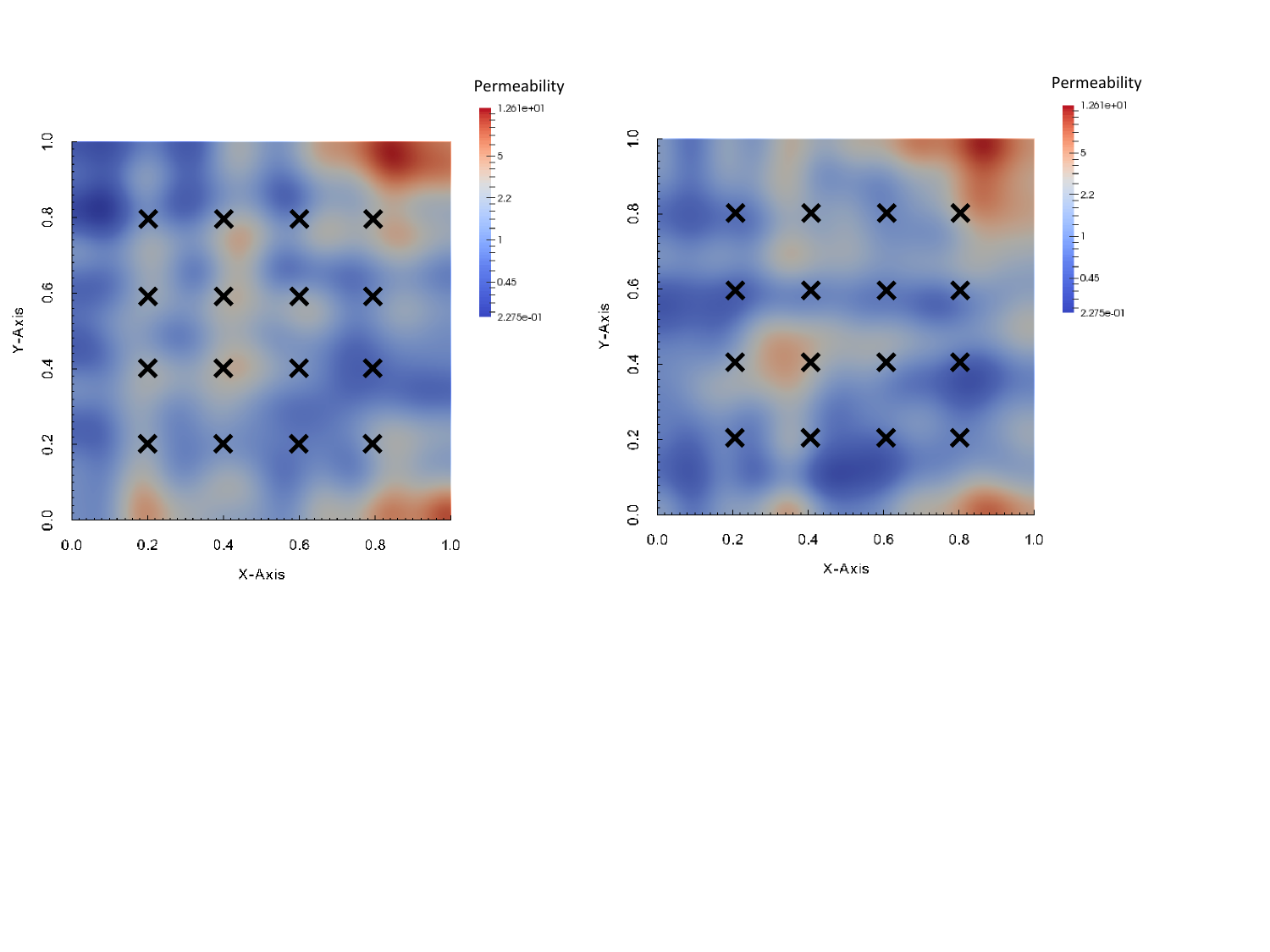}
 \caption{\label{fig:data_posteriorsample} Left: Synthetic data used
   in Section \ref{sec:MLMCMCtests}. Right: Posterior sample created by our
   algorithm on grid level 4. For both plots, data points are marked by crosses.}
 \end{figure}

\begin{figure}
 \includegraphics[width =
 0.45\linewidth]{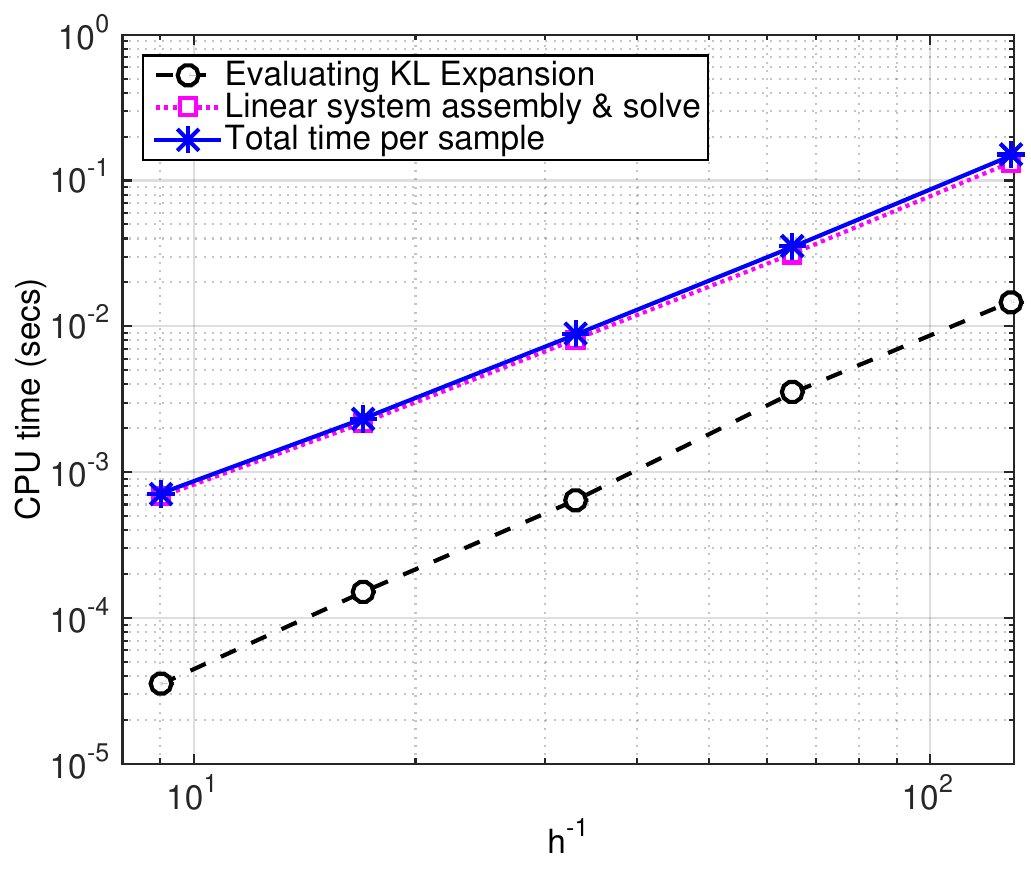} \hfill \includegraphics[width = 0.45\linewidth]{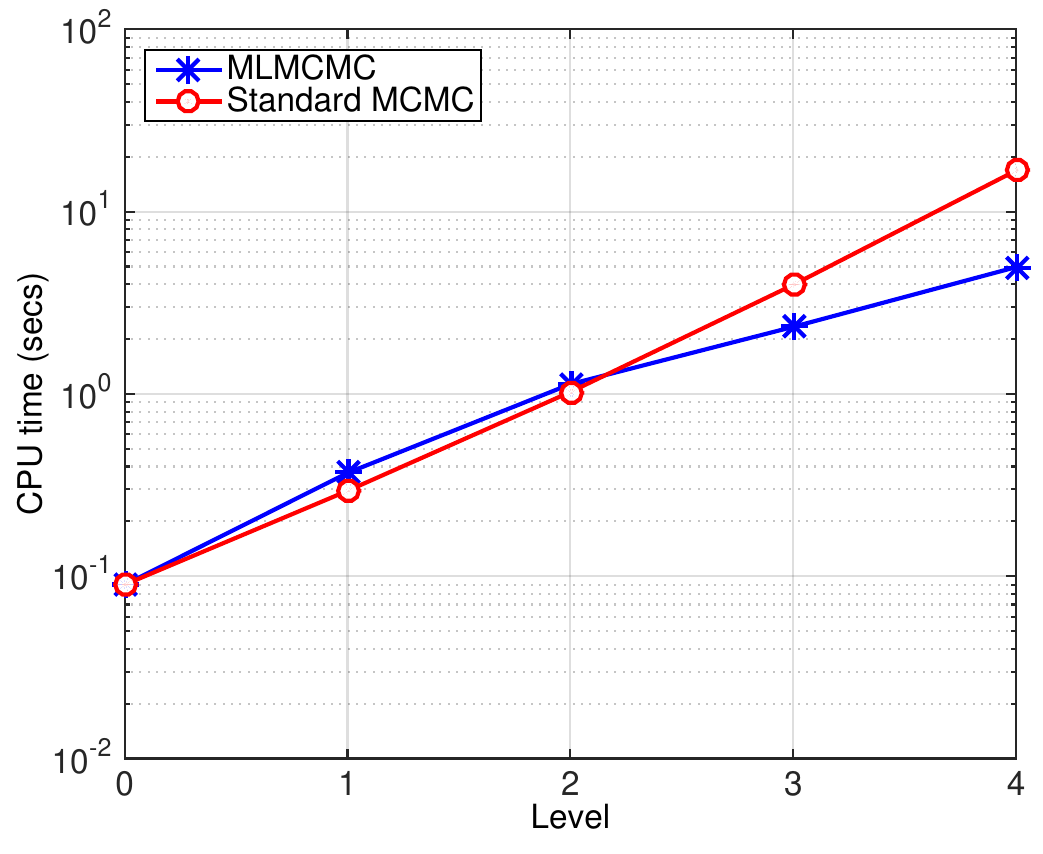}
 \caption{\label{fig:sampleCosts} Left: Cost (CPU time in seconds)
   to compute one sample of $Q_h$ as a function of $h$. Right: Cost
   $\mathcal C^{\text{eff}}_\ell$ per independent sample on level $\ell$.}
 \end{figure}
 We compare the performance of our new multilevel method to standard
Metropolis-Hastings MCMC with pCN proposal distribution (again with
tuning parameter $\beta_\ell = 0.1$). The cost $\mathcal C_\ell$ to
compute one individual sample of $Q_\ell$ on level $\ell$ with our
code is shown in actual CPU
time in Fig.~\ref{fig:sampleCosts}(left), obtained on a 2.4GHz Intel
Core i7 processor. The cost in
{\tt FreeFEM++} is dominated by the assembly of the FE stiffness
matrix and so it grows like $\mathcal{O}(h_\ell^{-2}) =
\mathcal{O}(M_\ell)$. We believe that this behaviour is
representative for problems of this size when the uniform grid
structure is not exploited in the assembly process and that these CPU times are
competitive. For larger problem sizes, the cost of the
linear solver will become the dominant part. However, for the MLMCMC
algorithm we are really interested in the cost $\mathcal
C^{\text{eff}}_\ell$ defined in \eqref{eq:effectivecost} to compute
one independent sample on level $\ell$ using Algorithms 2 and 3 with 
$t_k = \lceil \widetilde{\tau}_k \rceil$. These times are shown in
Fig.~\ref{fig:sampleCosts}(right). They are compared to the cost to
produce  one independent
sample on level $\ell$ using the standard MCMC Algorithm~1. The
integrated autocorrelation times $\widetilde{\tau}_\ell$ for the auxiliary
chains $\{\widetilde{Q}_\ell^n\}$ on each level in our example are
given in Tab.~\ref{tab:autocorrelationlengths}. Note that since the
coarse samples are (essentially) independent, the integrated
autocorrelation times $\tau_\ell$ for the chains $\{Y_\ell^n\}$ are
almost identical, i.e. $\tau_\ell \approx \widetilde{\tau}_\ell$.
\begin{table}[h]
\begin{center}
\begin{tabular}{|c|c|c|c|c|c|}
\hline
Level                                                                & 0 & 1 & 2 & 3 & 4 \\ \hline
$\widetilde{\tau}_\ell$ & 136.23  & 3.66  & 2.93  & 1.46  & 1.23  \\ \hline
\end{tabular}
\caption{\label{tab:autocorrelationlengths} Integrated autocorrelation
  times of the auxiliary chains $\{\widetilde{Q}^n_\ell\}$ on levels $\ell=0,\ldots,4$.} 
\end{center}
\end{table}
\renewcommand{\floatpagefraction}{.8}

In Fig.~\ref{fig:costcomparison} we now compare the performance of 
our MLMCMC method
with finest level $L$ varying from $1$ to $4$ with standard MCMC on
the same level. The tolerance $\varepsilon_L$ for each of the cases is 
chosen such that the the bias error is less than
$\varepsilon_L/\sqrt{2}$, leading to $\varepsilon_1 = 0.04$,
$\varepsilon_2 = 0.017$, $\varepsilon_3 = 0.013$ and
$\varepsilon_4 = 0.0067$, respectively. The estimated bias error
decays with about $\mathcal{O}(h)$ which is faster than what we would
expect for the functional in \eqref{def:functional} which does not
satisfy Assumption A2 (see
\cite{tsgu13}). It is likely that this is because the second term in
\eqref{eq:bias_triangle}, i.e. the bias error in the posterior distribution,
dominates. That bias error is due to the FE approximation of 
pressure evaluations at points here, which are
expected to converge with $\mathcal{O}(h \log|h|))$ (see
\cite{teckentrup_thesis}). The slight variation in the convergence
rate could mean that some features in the posterior were only picked
up on a sufficiently fine grid. The optimal numbers $N^{\text{eff}}_\ell$ of (independent)
samples on each level are chosen according to formula
\eqref{eqn:optimumNl}. They are plotted in
Fig.~\ref{fig:costcomparison}(left). Please note that these are
numbers of independent samples. The total number of samples computed
on the coarser levels is much larger. For example, for the four
level estimator we needed about $4 \times 10^7$ actual PDE solves for
all the auxiliary chains on level 0 combined. However, each of these
solves is about 250 times cheaper than a solve on level 4. Because
$\tau_4 \approx \widetilde{\tau}_4 = 1.23$, we see from
Fig.~\ref{fig:costcomparison}(left) that we need only about 562 PDE
solves on level 4. These are huge savings against standard MCMC which
requires about $4 \times 10^6$ solves on level 4 to achieve the same sampling
error. We can see this clearly in the overall cost comparison in 
Fig.~\ref{fig:costcomparison}(right). The gains are even more pronounced
if we relax the overly conservative choice of $t_k = \lceil
\widetilde{\tau}_k \rceil$ for the subsampling rates. 
\begin{figure}
\includegraphics[width = 0.52\linewidth]{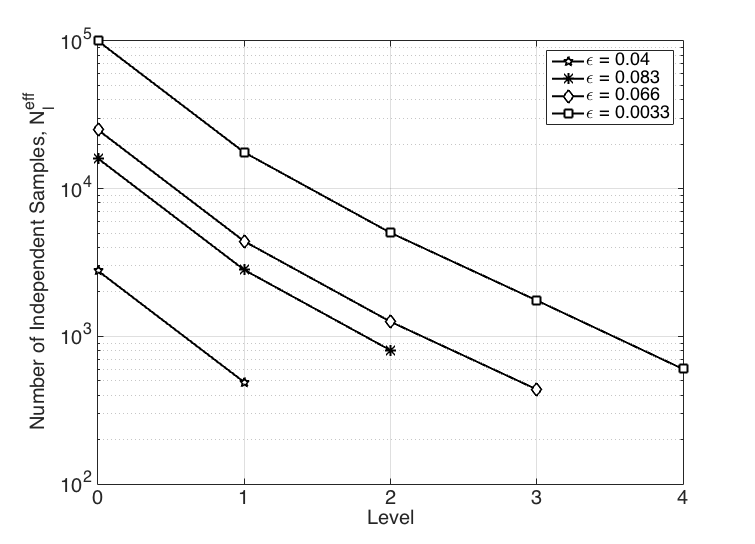} \hfill
\includegraphics[width = 0.4\linewidth]{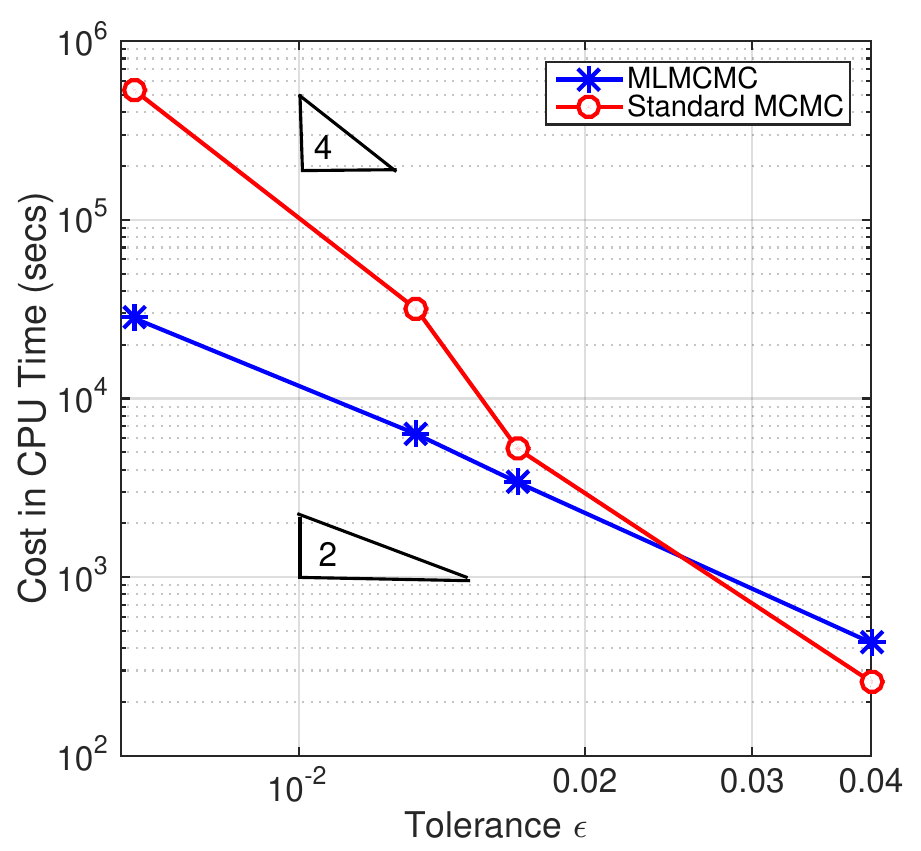}
\caption{\label{fig:costcomparison}Left: Number of independent samples
  $N_\ell^{\text{eff}}$ on each level for four different
  tolerances. Right: Total cost (in
  seconds) for the multilevel and the
  single-level estimators plotted against tolerance $\epsilon$.}
\end{figure}

In our final Fig.~\ref{fig:variance_alpha}, we confirm our theoretical results and plot
our estimates for $\VV_{\nu^{\ell,\ell-1}}\left[Y_\ell^{n} \right]$
(left) and for $\EE_{\boldsymbol{\zeta}}\big[(1 -
\alpha^\ell_\mathrm{ML}(\theta_\ell'|\theta_\ell^{n}))\big]$
(right). Ignoring the last data point in each of the plots, which
seem to be outliers, the variance seems to converge with almost
$\mathcal{O}(h^2)$ and the multilevel rejection probability
slightly faster than $\mathcal{O}(h)$. We are not sure whether this
means that the bounds in Lemma \ref{lem:varconv} and in Lemma
\ref{thm:alphaconv} are both
slightly pessimistic or whether this is just some pre-asymptotic 
behaviour.
\begin{figure}
\includegraphics[width = 0.45\linewidth]{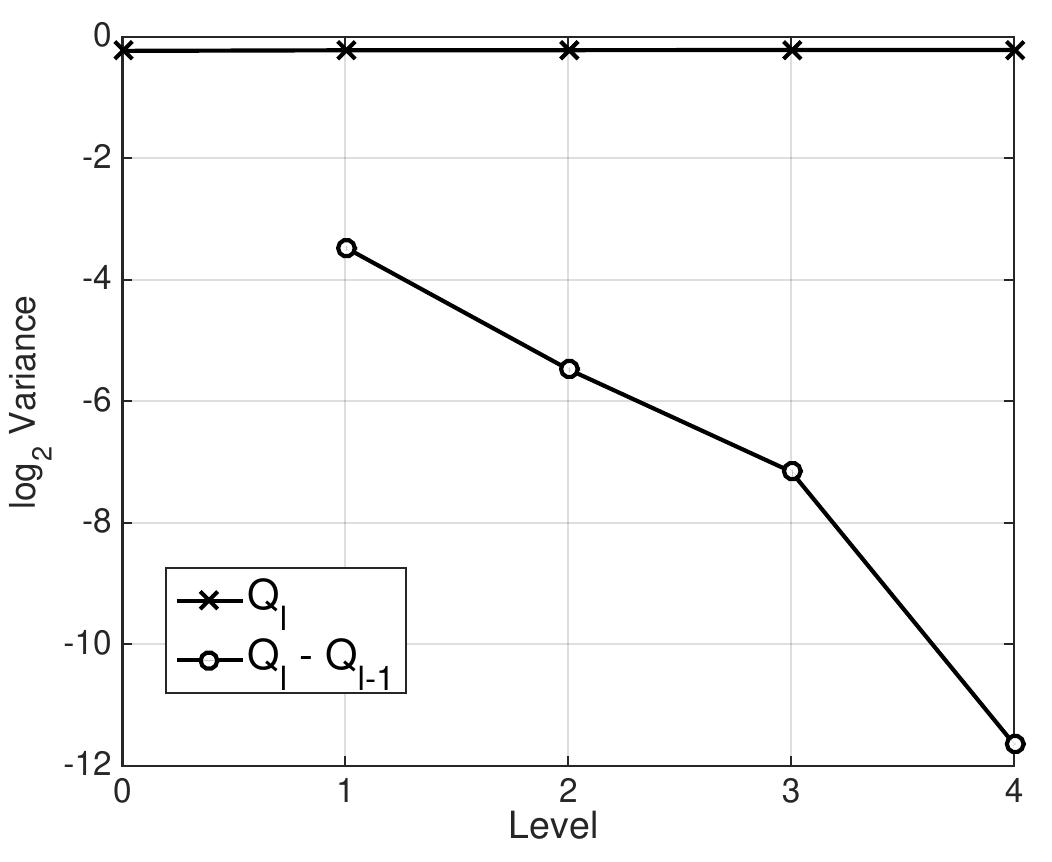} \hfill
\includegraphics[width = 0.45\linewidth]{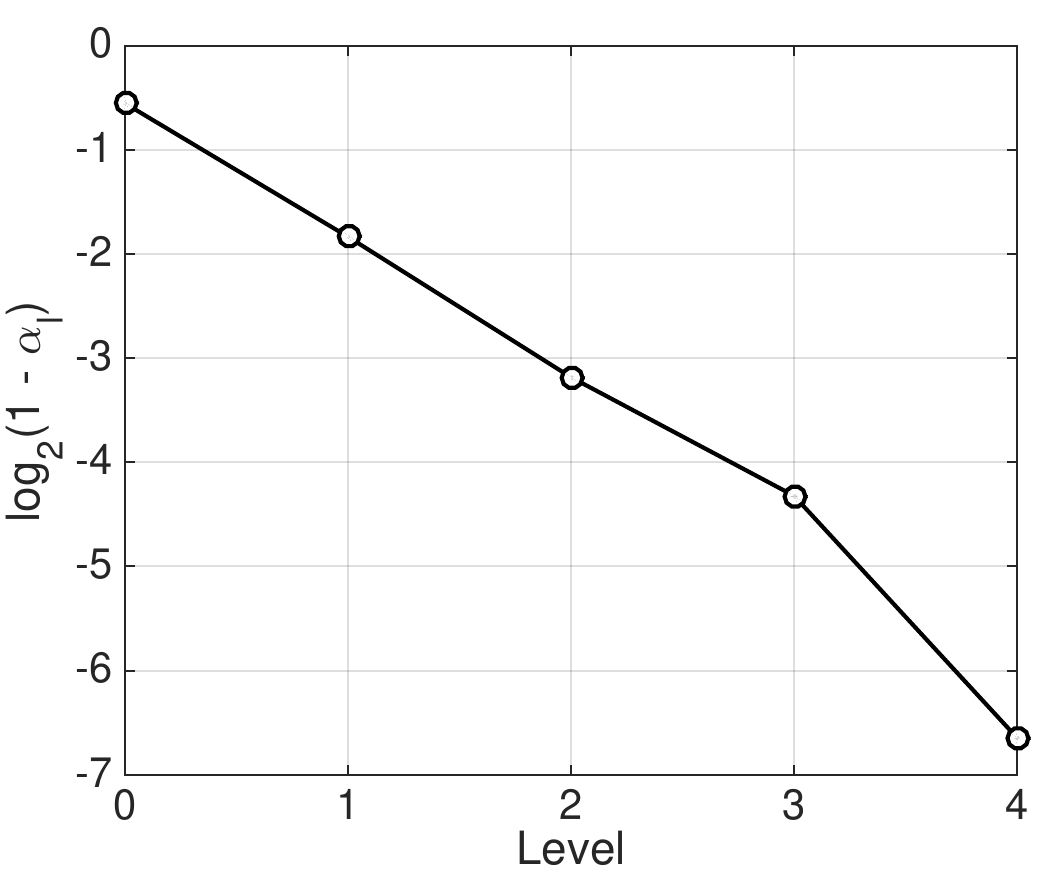}
\caption{\label{fig:variance_alpha}Convergence plots for
  $\VV_{\nu^{\ell,\ell-1}}\left[Y_\ell^{n} \right]$ and $\EE_{\boldsymbol{\zeta}}\big[(1 -
\alpha^\ell_\mathrm{ML}(\theta_\ell'|\theta_\ell^{n}))\big]$.}
\end{figure}

\begin{remark}\rm
It is worth to point out that the recursive independence sampling
in Algorithm~3 also brings significant savings if used to produce
proposals for a standard MCMC algorithm, as the comparison of the cost
per independent sample in Fig.~\ref{fig:sampleCosts}(right) clearly
shows. This is related to the delayed acceptance method of
\cite{cf05}. The multilevel approach also provides a very efficient
burn-in method, due to the significantly reduced integrated
autocorrelation times on the finer levels and since most of the
burn-in happens on the coarsest level. This is related to the approach
in \cite{ehl06}. 
\end{remark}

\section{Conclusion}
Bayesian inverse problems in large scale applications are often too costly to solve using conventional Metropolis-Hastings MCMC algorithms due to the high dimension of the parameter space and the large cost of computing the likelihood. In this paper, we employed a hierarchy of computational models to define a novel multilevel version of a Metropolis-Hastings algorithm, leading to significant reductions in computational cost. The main idea underlying the cost reduction is to build estimators for the difference in the quantity of interest between two successive models in the hierarchy, rather than estimators for the quantity itself. The new algorithm was then analysed and implemented for a single-phase Darcy flow problem in groundwater modelling, confirming the effectiveness of the algorithm. 

The algorithm presented in this paper is not reliant on the specific computational model underlying the simulations, and is generally applicable. The underlying computational model will in general influence the convergence rates $\alpha, \alpha', \beta$ and $\beta'$ of the discretisation errors, and the growth rate $\gamma$ of the cost of the likelihood computation (cf Theorem \ref{thm:main}), which in turn govern the cost of the standard and multilevel Metropolis-Hastings algorithms. The gain to be expected from employing the multilevel algorithm is always significant, and the gain is in fact larger for more challenging model problems, where the values of $\alpha, \alpha', \beta$ and $\beta'$ are small and $\gamma$ is large.

The algorithm also allows for the use of a variety of proposal
distributions. The crucial result in this context is the convergence
of the multilevel acceptance probability to $1$ (cf.~Lemma
\ref{thm:alphaconv}), which in general has to be verified for each
proposal distribution individually, but is expected to hold for most
proposal distributions.\vspace{2ex}

\noindent
{\bf Acknowledgement.} Big thanks go to Panayot Vassilevski who
  initiated and financially supported this work during two visits of
  Scheichl and Teckentrup at Lawrence Livermore National
  Labs (LLNL), California. He was involved in 
  most of the original discussions
  about this method. Christian Ketelsen was postdoctoral researcher
  under his supervision at LLNL under Contract DE-AC52-07A27344 at the
  time. We would also like to
  particularly thank Finn Lindgren and Rob Jack for spotting an error
  in our original version of Lemma \ref{lem:all} and for helping us
  to find a fix.

\small
\bibliographystyle{plain}
\bibliography{bibMLMC}
\end{document}